\titlespacing{\subparagraph}{0em}{0em}{0.5em}
\DeclarePairedDelimiter\abs{\lvert}{\rvert}%
\DeclarePairedDelimiter\norm{\lVert}{\rVert}%
\let\oldabs\abs
\def\abs{\@ifstar{\oldabs}{\oldabs*}}
\let\oldnorm\norm
\def\norm{\@ifstar{\oldnorm}{\oldnorm*}}
\setlist[enumerate]{noitemsep, partopsep=0pt, topsep=0pt, parsep=0pt, itemsep=0pt}
\setlist[itemize]{noitemsep, partopsep=0pt, topsep=0pt, parsep=0pt, itemsep=0pt}
\crefname{equation}{}{}
\newlist{theoenum}{enumerate}{1} 
\setlist[theoenum]{label=\normalfont(\roman*), ref=\theproposition~\normalfont(\roman*), noitemsep, partopsep=0pt, topsep=0pt, parsep=0pt, itemsep=0pt}
\theoremstyle{plain}
\newtheorem{theorem}{Theorem}
\newtheorem*{theorem*}{Theorem}
\newtheorem{proposition}[theorem]{Proposition}
\newtheorem{lemma}[theorem]{Lemma}
\newtheorem*{lemma*}{Lemma}
\theoremstyle{definition}
\newtheorem{definition}[theorem]{Definition}
\theoremstyle{remark}
\newtheorem{remark}[theorem]{Remark}
\newtheorem*{remark*}{Remark}			
\newcommand*\diff{\mathop{}\!\mathrm{d}}
\newcommand*\Diff{\mathop{}\!\mathrm{D}}
\newcommand{\N}{\mathbb{N}}
\newcommand{\R}{\mathbb{R}}
\newcommand{\D}{\mathcal{D}}
\renewcommand{\phi}{\varphi}
\renewcommand{\L}{\mathrm{L}}
\renewcommand{\exp}{\mathrm{exp}}
\DeclareMathOperator{\T}{\mathrm{T}}
\let\originalleft\left
\let\originalright\right
\renewcommand{\left}{\mathopen{}\mathclose\bgroup\originalleft}
\renewcommand{\right}{\aftergroup\egroup\originalright}
\renewcommand{\H}{\mathds{H}}
\date{\today}
\title{\textbf{\uppercase{\large{Optimal Transport on the sub-Lorentzian Heisenberg group}}}}
\author[2]{Samuël Borza\thanks{\href{mailto:samuel.borza@univie.ac.at}{samuel.borza@univie.ac.at}}}
\author[1]{Wilhelm Klingenberg\thanks{\href{mailto: wilhelm.klingenberg@dur.ac.uk}{wilhelm.klingenberg@dur.ac.uk }}}
\author[1]{Patrick Wood\thanks{\href{mailto:patrick.a.wood@durham.ac.uk}{patrick.a.wood@durham.ac.uk}}}
\affil[1]{Department of Mathematics, Durham University}
\affil[2]{Faculty of Mathematics, University of Vienna}					
\begin{document}

 \maketitle							

\providecommand{\keywords}[1]
{
	\textbf{\textit{Keywords---}} #1
}

\providecommand{\msc}[1]
{
	\textbf{\textit{MSC (2020)---}} #1
}

\begin{abstract}
 	 We investigate the synthetic metric spacetime structure of the sub-Lorentzian Heisenberg group and we study the optimal transport problem in this space. The sub-Lorentzian version of Brenier's theorem is established in this setting. Finally, we provide examples of optimal transport maps and derive a sub-Lorentzian Monge-Ampère equation.
\end{abstract}

\keywords{Optimal transport, sub-Lorentzian geometry, Heisenberg group}

\msc{53C50, 53C17, 49Q22}				

 \begin{spacing}{0.9}
 \setlength{\cftbeforesecskip}{0pt}
 \vspace{-0.4cm}\tableofcontents
 \end{spacing}

\section{Introduction}

Sub-Lorentzian geometry is to Lorentzian geometry what sub-Riemannian geometry is to Riemannian geometry. A sub-Lorentzian space is a smooth manifold equipped with a Lorentzian metric defined at every point on a subspace of the tangent space, known as the horizontal space of admissible directions. There is an extensive body of literature and active research in sub-Riemannian geometry; see comprehensive references such as \cite{Montgomery2002, comprehensive2020}. However, relatively little research has been conducted on sub-Lorentzian geometry. The idea of such a theory was envisaged in Strichartz’s seminal paper on sub-Riemannian geometry \cite{Strichartz1986}, but it was really initiated through Grochowski’s work, whose first publication on the topic seems to be \cite{Grochowski2002}.

The Heisenberg group is the prototypical and simplest example of a geometric space with non-holonomic constraints. It provides significant insight into the theory and has been studied extensively when equipped with a sub-Riemannian metric—see again \cite{comprehensive2020}, as well as \cite{Capogna2007}. It is therefore natural to consider the Heisenberg group equipped with a sub-Lorentzian metric, and important contributions have been made in \cite{Grochowski2004, Grochowski2006, sachkov2022sublorentzian}.

In the past few decades, the study of length spaces and the analysis of metric measure spaces have been very successful in developing an abstract, synthetic, and non-smooth theory encompassing (sub)-Riemannian and (sub)-Finslerian theories, as well as their limits in the sense of Gromov-Hausdorff. Recently, a theory of Lorentzian length spaces and metric spacetimes was introduced in \cite{kuzingersamann2018} with the goal of developing an analogous synthetic setting for Lorentzian geometry. While in metric geometry the central object is the distance function, it is the time-separation, i.e. the maximum (proper) time as measured by a clock along any line between two given points, that is taken as the embodiment of the fundamental structural feature of a spacetime. In addition to being interesting from a mathematical point of view, this approach is also motivated in physics by the need to handle spacetimes with very low regularity, both in general relativity and in quantum gravity.

Sub-Lorentzian geometry is also pertinent from a physical point of view, but it has not yet been studied within the non-smooth framework of Lorentzian length spaces. One of the contributions of the present work is to show (see \cref{subsection:HeisenbergMetricSpacetime}) that the sub-Lorentzian Heisenberg group is indeed a Lorentzian metric spacetime in the sense of \cite{kuzingersamann2018}, with regularity properties and its place within the ``causal ladder'' identified in \cref{prop:HisGloballyHyperbolic}. This is achieved after reviewing, in \cref{section:subLgeometry}, the sub-Lorentzian structure of the Heisenberg group ``from the Hamiltonian point of view'', according its the optimal control formulation of \cite{sachkov2022sublorentzian}.

An important turning point in the recent development of metric geometry was the discovery of the central role that optimal transport can play, see \cite{Villani2009} and \cite{Santambrogio2015} for a detailed overview and futher references. From curvature-bound conditions to isoperimetric problems, and even interpolation inequalities, optimal transport has become a key tool in modern geometric analysis. It is therefore not surprising that the foundation for a Lorentzian version of optimal transport theory, compatible with the framework from \cite{kuzingersamann2018}, was soon laid down in \cite{Mondino2023, CavallettiMondino2023, CavallettiMondino2022}.

In modern language, Monge's original problem, which he introduced and studied in 1781, can be formulated as follows. Given $p \geq 1$ and $\mu, \nu \in \mathcal{P}(\R^n)$ that are assumed, for simplicity, to be absolutely continuous with respect to the Lebesgue measure and with compact support, find a map that minimises the functional
\[
M(T) = \frac{1}{p} \int_{\R^n} \diff(x, T(x))^p \diff \mu(x)
\]
among all the measurable maps $T : \R^n \to \R^n$ that transport the measure $\mu$ to $\nu$, i.e. satisfying $T_\sharp \mu = \nu$. Monge was motivated by an engineering problem where an amount of material extracted from the earth or a mine, the ``déblai'' modelled by $\mu$, is to be placed into a new construction, the ``remblai'' modelled by $\nu$, in the most efficient way. This problem with $p = 1$ is the one that he originally considered and roughly corresponds to minimising the total distance that each particle of material will travel. When $p = 2$, it is more of an energy minimisation that is at play. Because of convexity issues, it turned out that the case $p = 1$ is actually much trickier than for $p > 1$. The latter case was resolved by Brenier in his celebrated paper \cite{Brenier1991}, and the result is now known as Brenier's theorem.
For another clear explanation, see also \cite[Theorem 1.17]{Santambrogio2015}, and we refer the reader to \cite[Chapter 3]{Santambrogio2015} for the former case $p = 1$, which we will not address in the present work. From there, Brenier's theorem was generalised to other background spaces: in Riemannian geometry in \cite{mccann2001}, in the sub-Riemannian Heisenberg group in \cite{AmbrosioRigot2004}, and for large classes of sub-Riemannian manifolds in \cite{Figalli2010}. In any such result, the optimal transport map $T$ is proven to be unique, and expressed as the geodesic exponential map of the differential of a convex function, where "convexity" is understood in a suitable sense dependent on the cost $d^p/p$.

 We briefly explain the Lorentzian version of Monge's optimal transport problem in order to state our result. Given $p \in \linterval{0}{1}$, a Lorentzian length space $X$ with a time-separation function $\tau$, and $\mu, \nu \in \mathcal{P}(X)$, the (forward) Lorentz-Monge problem consists in maximising the functional
\[
M^+(T) = \frac{1}{p}\int_{X} \tau(x, T(x))^p \diff \mu(x)
\]
among all the measurable maps $T: X \to X$ such that $T_\sharp \mu = \nu$, and such that $\tau(x, T(x)) > 0$ for $\mu$-almost every $x \in X$. A map maximising this functional is called a (forward) optimal transport map from $\mu$ to $\nu$. This latter condition is necessary to ensure that transport occurs forward in time and in a chronological manner, i.e. $T(x)$ lies in the future of $x$, and the journey from $x$ to $T(x)$ can be completed at a speed lower than that of light. The cosmological interpretation here is that we wish to transport a spacetime gas with configuration $\mu$ chronologically to a future new configuration $\nu$ in the most optimal way, that is to say, while maximising proper time. Analogously, we can study the Lorentz-Monge transport problem that is backward in time, i.e. maximise the function
\[
M^-(T) = \frac{1}{p}\int_{X} \tau(T(x), x)^p \diff \mu(x)
\]
among all the measurable maps $T: X \to X$ such that $T_\sharp \mu = \nu$, and such that $\tau(T(x), x) > 0$ for $\mu$-almost every $x \in X$.

In this paper, the Lorentzian length space is the sub-Lorentzian Heisenberg group, which we denote by $\H$, and we assume that $p \in \ointerval{0}{1}$. In the following statement, the Hamiltonian $H$ is given in \cref{eq:HamiltonianH}, while the notion of a $c_p$-concave map will be explored in detail in \cref{section:OTinSLH}, with the notation $c_p := \tau^p/p$. For now, it suffices to view a $c_p$-concave map $\phi : A_1 \subseteq \H \to \R$ as a locally semiconcave function, which is, in particular, differentiable almost everywhere. We denote by $\H_{\ll}^2$ the set of pairs of points $(q, q') \in \H^2$ such that $\tau(q, q') > 0$. The exponential map $\exp_q$ represents the sub-Lorentzian geodesic flow, and we will discuss it in detail in \cref{section:subLgeometry}.

\begin{theorem}[Brenier's theorem in the sub-Lorentzian Heisenberg group]\ \\
	Let $p \in \ointerval{0}{1}$, $\mu, \nu \in \mathcal{P}(\H)$ with compact support,  $\mu, \nu \ll \mathcal{L}^3$, and $\mathrm{supp}(\mu) \times \mathrm{supp}(\nu) \subseteq \H_{\ll}^2$. Then, the following holds.
	\begin{theoenum}
    \item There is a unique forward optimal transport map $T^{\mu \to \nu}$ from $\mu$ to $\nu$ and it is given by
    \begin{equation}
    \label{eq:MainT(q)}
	T^{\mu \to \nu}(q) = \exp_q\left(-{\Diff_{q} \phi}\bigg/{\left(\sqrt{2H(q, \Diff_q \phi)}\right)^{\tfrac{p - 2}{p-1}}}\right),
\end{equation}
where $\phi : A_1 \to \R$ is any map that is $c_p$-concave relative to a pair of sets $(A_1, A_2)$, with $\mu(A_1) = \nu(A_2) = 1$, $A_1$ being open, $A_2$ bounded, $\mathrm{supp}(\mu) \subseteq A_1$, and $A_1 \times A_2 \subseteq \H_\ll^2$.
	\item Conversely, if $A_1$ and $A_2$ are non-empty subsets of $\H$ such that $A_1 \times A_2 \subseteq \H_{\ll}^2$, if $A_1$ is open, and if $\phi : A_1 \to \R$ is a $c_p$-concave map relative to $(A_1, A_2)$ such that $\phi$ is differentiable $\mu$-almost everywhere, then the map $T$ given by \cref{eq:MainT(q)} is a forward optimal transport map from $\mu$ to $\nu$.
	\item Denoting by $T^{\nu \to \mu}$ the unique backward optimal transport map from $\nu$ to $\mu$, we get that
	\[
	T^{\mu \to \nu} \circ T^{\nu \to \mu} = \mathrm{Id} \ \ \ \text{ $\nu$-a.e. in $\H$, } \ \ \ \ \ T^{\nu \to \mu} \circ T^{\mu \to \nu} = \mathrm{Id} \ \ \ \text{ $\mu$-a.e. in $\H$}.
	\]
    \end{theoenum}
\end{theorem}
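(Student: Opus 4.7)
The plan is to follow the classical Brenier-McCann strategy as adapted to the causal Lorentzian setting by Cavalletti-Mondino, specialising to the sub-Lorentzian Heisenberg structure analysed in \cref{section:subLgeometry}. First I would relax the Monge problem to the Kantorovich formulation of finding a probability measure $\pi \in \Pi(\mu, \nu)$ on $\H^2$, supported in $\H^2_{\ll}$, that maximises $\int c_p \diff\pi$ with $c_p = \tau^p/p$. Since $\mathrm{supp}(\mu) \times \mathrm{supp}(\nu) \subseteq \H^2_{\ll}$ is compact and $c_p$ is continuous and bounded there, an optimal plan exists by a standard weak compactness argument. Kantorovich duality then produces a $c_p$-concave potential $\phi : A_1 \to \R$, with its $c_p$-transform $\psi$ defined on a suitable $A_2$, such that $\phi(q) + \psi(q') \leq c_p(q, q')$ on $A_1 \times A_2$ with equality on $\mathrm{supp}(\pi)$.

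The heart of the argument is then to show that every such $\pi$ is concentrated on the graph of the map appearing in \eqref{eq:MainT(q)}. For any $q \in A_1$ at which $\phi$ is differentiable, and for any $q'$ with $(q,q') \in \mathrm{supp}(\pi)$, the function $q \mapsto c_p(q,q') - \psi(q') - \phi(q)$ attains its maximum at $q$, so
\[
\Diff_q \phi = \Diff_q c_p(q, q') = \tau(q,q')^{p-1}\, \Diff_q \tau(q, q').
\]
The sub-Lorentzian Hamiltonian analysis recalled in \cref{section:subLgeometry} identifies $-\Diff_q \tau(q,q')$ with the initial covector $\lambda$ of the unique maximising geodesic from $q$ to $q'$ parametrised by proper time, i.e. with $\sqrt{2H(q, \lambda)} = 1$. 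Hence the initial covector generating the geodesic from $q$ that reaches $q'$ in unit time is $-\tau(q,q')\lambda = -\Diff_q \phi/\tau(q,q')^{p-2}$. Rewriting $\tau(q,q') = \sqrt{2H(q, \tau(q,q')^{p-1}\lambda)}^{1/(p-1)} = \sqrt{2H(q, \Diff_q\phi)}^{1/(p-1)}$ by positive homogeneity of $\sqrt{2H}$ in the fibre variable yields precisely the normalisation exponent $(p-2)/(p-1)$ of \eqref{eq:MainT(q)}, and $q'$ is recovered as the image of this initial covector under $\exp_q$. Because $\phi$ is locally semiconcave as a $c_p$-concave function and $\mu \ll \mathcal{L}^3$, differentiability holds $\mu$-almost everywhere, so $\pi$ is indeed supported on the graph of the single-valued map \eqref{eq:MainT(q)}, giving both existence and uniqueness in (i).

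The converse (ii) is obtained by running the previous calculation in reverse: given $\phi$ as stated, the pointwise Young-type inequality $\phi(q) + \psi(q') \leq c_p(q,q')$ becomes an equality along $(q, T(q))$ by construction, and cyclical $c_p$-monotonicity of the graph of $T$ then forces any coupling supported on it to be Kantorovich-optimal. For (iii), the backward Monge problem admits the symmetric treatment with the roles of $\mu$ and $\nu$ exchanged and the chronological order reversed; the backward Kantorovich potential can be chosen to be the $c_p$-transform of $\phi$, its differentiability $\nu$-almost everywhere follows from $\nu \ll \mathcal{L}^3$, and the Hamiltonian geodesic realising $\tau(T^{\mu\to\nu}(q), q)$ in the backward problem is simply the time-reversal of the one realising $\tau(q, T^{\mu\to\nu}(q))$. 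Composition therefore returns the identity on the full-measure set where both maps are defined.

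The main obstacle I expect is the second step, namely justifying the differentiation of $c_p$ at optimal pairs and inverting the relation $\lambda \mapsto \tau(q,q')^{p-1}\lambda$, despite the pathologies of sub-Lorentzian geometry: the exponential map is not globally injective, abnormal extremals threaten to appear, and $q \mapsto \tau(q,q')$ can lose smoothness on cut and conjugate loci. The specific algebraic structure of $\H$ and the fact that the chronological set $\H^2_\ll$ stays away from the singular support of $\exp$, together with the semiconcavity of $\phi$ on $A_1$, are what make the formal differentiation rigorous and ultimately yield a well-defined single-valued map $T$.
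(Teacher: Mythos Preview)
Your proposal is correct and follows the same overall strategy as the paper: Kantorovich relaxation, existence of an optimal plan by compactness, $c_p$-cyclical monotonicity, a $c_p$-concave Kantorovich potential $\phi$ from duality, local semiconcavity of $\phi$ giving a.e.\ differentiability, and then a pointwise first-order argument forcing $\mathrm{supp}(\pi)$ onto a graph. Parts (ii) and (iii) are handled in the paper exactly as you sketch.

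The one place where your route and the paper's genuinely diverge is the identification of $q'$ from $\Diff_q\phi$. You differentiate $c_p(\cdot,q')$ directly and invoke the identity ``$-\Diff_q\tau(q,q')$ equals the initial covector $\lambda$ of the unit-speed maximising geodesic from $q$ to $q'$''. This is legitimate here because $\tau$ is real-analytic on $\H^2_\ll$ (\cref{theorem:propertytau}), but note that this identity is \emph{not} stated anywhere in \cref{section:subLgeometry}; it is precisely the content of \cref{lemma:Geometric}, which the paper proves by a different mechanism. Rather than differentiating $\tau$, the paper uses the Lagrange multiplier rule in control space (\cref{prop:LagrangeMultiplier}): any upper support $\psi$ of $c_p(\cdot,q')$ at $q$ makes the optimal control a critical point of $u\mapsto \psi(E^{T,0}_{q'}(u))-\L(u)^p/p$, and the first-order condition, read through the differential of the end-point map, yields $\lambda_0=\Diff_q\psi/\tau(q,q')^{p-1}$. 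This is the Ambrosio--Rigot/Figalli--Rifford route from sub-Riemannian optimal transport.

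What each buys: your direct differentiation is shorter and transparent once one grants smoothness of $\tau$, which is special to the Heisenberg group. The paper's control-theoretic argument needs only an upper support function, not differentiability of $\tau$ itself, so it is the version that would survive on sub-Lorentzian manifolds where $\tau$ is merely semiconvex. If you keep your approach, you should flag that the equality $\Diff_q\phi=\Diff_qc_p(q,q')$ and the covector identification are both non-trivial in a sub-Lorentzian setting and indicate how they follow from smoothness of $\tau$ together with uniqueness of the timelike maximiser (\cref{thm:geodesicspace}).
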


We prove this theorem through a series of results in \cref{subsection:brenierSLH}, removing the assumptions of absolute continuity and compactness of support whenever possible.

There are Lorentzian versions of Brenier's theorem in the literature, specifically in Lorentzian manifolds \cite{McCann2020} and in Finsler spacetimes \cite{BraunOhta2024}, and our result should be the first in the sub-Lorentzian setting. As will be made clear from the proof, a key fact used to establish the theorem is the regularity of the time-separation function in the sub-Lorentzian Heisenberg group. This was achieved in \cite[Theorem 6]{sachkov2022sublorentzian} and we recall this result in \cref{theorem:propertytau}.
The elements of the argument specific to the sub-Lorentzian setting are inspired by \cite{AmbrosioRigot2004} and \cite{Figalli2010}: we formulate a sub-Lorentzian Lagrange multiplier rule for timelike covectors in \cref{subsection:LagrangeMultiplier} and use it to derive the key geometric \cref{lemma:Geometric} and \cref{lemma:Geometric2}, which are well known in sub-Riemannian geometry. Finally, we deduce in \cref{subsection:MongeAmpere+examples} a Monge-Ampère type equation for the optimal transport map which is a classical consequence of Brenier's theorem, and we show some examples of optimal transport maps, including a study of the optimality of the right-translations.

We conclude this introduction with some open questions. We have not followed the approach in \cite[Section 6]{AmbrosioRigot2004} regarding the convergence of transport maps in the Lorentzian approximation of $\H$ to their sub-Lorentzian counterparts. This could be interesting and useful. The ideas presented in this paper are quite standard, but what prevents us from proving a Brenier's theorem for general sub-Lorentzian manifolds beyond the Heisenberg group is that we do not have enough knowledge on the regularity properties of the time-separation function. In sub-Riemannian geometry, the regularity properties of the distance function and the relationship with abnormal minimisers are well understood, as discussed in \cite[Chapter 11]{comprehensive2020}. A sub-Lorentzian version of these results is still lacking. As already mentioned, our result does not cover the case $p = 1$, for which different techniques should be employed, see \cite[Chapter 3]{Santambrogio2015} to have an idea in the Euclidean setting. A study of the so-called timelike curvature bounds $\mathsf{TCD}$ and $\mathsf{TMCP}$ will appear in a subsequent work.

\section*{Acknowledgements}

This project has received funding from the European Research Council (ERC) under the European Union’s Horizon 2020 research and innovation programme (grant agreement No. 945655). This research was funded in whole or in part by the Austrian Science Fund (FWF) [10.55776/EFP6]; and the Engineering and Physical Sciences Research Council (EPSRC) [EP/W523951/1]. 

We would like to thank Profs Yuri Sachkov and Séverine Rigot for their interest and helpful comments.		

\section{The sub-Lorentzian Heisenberg group}
\label{section:SLHeisenberg}

\subsection{Spacetime geometry of the sub-Lorentzian Heisenberg group}
\label{section:subLgeometry}

The Heisenberg group $\H$ is the connected and simply connected Lie group whose Lie algebra is graded, nilpotent, satisfying $\mathfrak{h} = \mathfrak{h}_1 \oplus \mathfrak{h}_2$ and such that $\mathfrak{h}_1$ is two-dimensional and generates $\mathfrak{h}$. In particular, $\mathds{H}$ is three-dimensional and fixing a basis $X, Y,$ and $Z = [X, Y]$ of $\mathfrak{h}$ induces global coordinates on $\mathds{H}$, called \emph{exponential coordinates}, through the map $(x, y, z) \mapsto \exp(x X + y Y + z Z)$. By the Campbell-Hausdorff formula, the law group in these coordinates writes as
\[
(x, y, z) \cdot (x', y', z') = \left(x + x', y + y', z + z' + \tfrac{1}{2}(x y' - x' y)\right).
\]
Using the left-translation $L_q : \H \to \H : q' \mapsto q \cdot q'$, we can define, with a slight abuse of notation, the left-invariant vector fields on $\mathds{H}$ whose values at the identity are $X, Y$ and $Z$. These will also be denoted by $X, Y$ and $Z$ respectively. Their expression in exponential coordinates is
\[
X = \partial_x - \frac{y}{2} \partial_z, \ Y = \partial_y + \frac{x}{2} \partial_z, \ \text{ and
 } \ Z = \partial_z.
\]
The distribution $\mathcal{D}$ spanned by $X$ and $Y$ is left-invariant and bracket generating, that is to say, $\mathrm{Lie}(\mathcal{D}_q) = \T_q(\mathds{H})$ for all $q \in \mathds{H}$ since the only non-zero Lie bracket is $[X, Y] = Z$.

The Heisenberg group $\H$ has been studied extensively when equipped with its natural sub-Riemannian structure, i.e. with a scalar product on $\mathcal{D}$ that turns $X$ and $Y$ into an orthonormal basis.
 In this work, we propose to further the study of the Heisenberg group endowed with a \emph{sub-Lorentzian} structure, in line with the previous works \cite{sachkov2022sublorentzian, Huang2012, Grochowski2006, Grochowski2004}.

Before addressing the Lorentzian optimal transport problem on this structure, we review its sub-Lorentzian geometry. The \textit{sub-Lorentzian metric} of $\H$ is the Lorentzian metric
$\langle \cdot, \cdot \rangle$ on $\D$, i.e. a non-degenerate, smooth, symmetric metric tensor on $\mathcal{D}$ that has index 1, defined by
\[
\langle u, v \rangle_p := u_2 v_2 - u_1 v_1,
\]
for all $u = u_1 X(q) + u_2 Y(q)$ and $v = v_1 X(q) + v_2 Y(q)$ in $\D_q$, and all $q \in \H$. In other words, it is the Lorentzian metric on $\D$ satisfying $\langle X, X \rangle = -1$ and $\langle Y, Y \rangle = 1$. For a given $q \in \H$, a vector $v \in \T_q(\H)$ is \emph{horizontal} if $v \in \D_q$. A horizontal vector $v \in \D_q$ is said to be timelike if $\langle v, v \rangle_q < 0$, null or lightlike if $\langle v, v \rangle_q = 0$ and $v \neq 0$, spacelike if $\langle v, v \rangle_q > 0$ or $v = 0$, and causal (or non-spacelike) if $\langle v, v \rangle_q \leq 0$ and $v \neq 0$.

The vector field $X$ is chosen as the \textit{time orientation}: for $q \in \H$, a horizontal vector $v \in \D_q$ is future-directed if $\langle v, X(p) \rangle_q < 0$ and past-directed if $\langle v, X(p) \rangle_q > 0$, adopting the ``mostly plus'' metric sign convention.

A horizontal curve is an absolutely continuous curve $\gamma : \interval{0}{T} \to \H$ such that $\dot{\gamma}(t) \in \D_{\gamma(t)}$ for almost every $t \in \interval{0}{T}$, i.e. if there exists a control $u = (u_1, u_2) : \interval{0}{T} \to \R^2$ in $\L^{\infty}(\interval{0}{T}, \R^2)$ such that
\begin{equation}
    \label{eq:controlledeq}
    \dot{\gamma}(t) = u_1(t) X(\gamma(t)) + u_2(t) Y(\gamma(t)), \ \ \text{for a.e. } t \in \interval{0}{T}.
\end{equation}
We will denote by $P_u^{t_0, t_1}(q_0)$ the associated flow, i.e. $P_u^{t_0, t_1}(q_0) := \gamma(t_1)$ where $\gamma$ solves \cref{eq:controlledeq} with initial condition $\gamma(t_0) = q_0$.
A horizontal curve $\gamma : \interval{0}{T} \to \H$ is timelike (resp. spacelike, lightlike, causal, future-directed, past-directed) if $\dot{\gamma}(t)$ is a timelike (resp. spacelike, lightlike, causal, future-directed, past-directed) horizontal vector for almost every $t \in \interval{0}{T}$. The length of a causal curve $\gamma : \interval{0}{T} \to \H$ controlled by $u \in \L^\infty(\interval{0}{T}, \R^2)$ is defined by
\begin{equation}
\label{eq:lorentzianlength}
	\L(\gamma) := \int_{0}^T |\langle \dot{\gamma}(t), \dot{\gamma}(t) \rangle_{\gamma(t)}|^{1/2} \diff t = \int_{0}^T \sqrt{u_1(t)^2 - u_2(t)^2} \diff t.
\end{equation}
For points $q_0, q \in \H$, the set $\Omega_{q_0q}$ denotes the set of future-directed causal curves $\gamma : \interval{0}{T} \to \H$ joining $q_0$ to $q$. A timelike curve $\gamma$ is parametrised by constant speed $v > 0$ if $\L(\gamma|_{\interval{s}{t}}) = v |s - t|$ for all $t, s \in \interval{0}{T}$, i.e. when $u_1(t)^2 - u_2(t)^2 = v^2$ for almost every $t \in \interval{0}{T}$. Any future-directed timelike curve can be reparametrised to have constant speed. By convention, we will also say that a lightlike curve has constant speed $v > 0$ if $u_1(t) = v$ for almost every $t \in \R$.

\begin{definition}    \label{timeseparation}
    The time-separation function of the sub-Lorentzian Heisenberg group $\H$ is the map $\tau : \H \times \H \to \interval{0}{\infty}$ defined as
    \begin{equation}
        \label{eq:tautimeseparation}
        \tau(q_0, q) :=
    \begin{cases}
        \ell(q_0, q) := \sup \left\{ \L(\gamma) \mid \gamma \in \Omega_{q_0q} \right\} & \text{ if } \Omega_{q_0q} \neq \emptyset \\
        0 & \text{ otherwise }
    \end{cases}.
    \end{equation}
\end{definition}

The causal future (resp. the chronological future) of a point $q_0 \in \H$ is the set $J^+(q_0)$ (resp. $I^+(q_0)$) of points $q \in \H$ for which there exists a future-directed causal (resp. a future-directed timelike) curve joining $q_0$ to $q$. The causal past $J^-(q_0)$ and chronological past $I^-(q_0)$ of a point $p \in \H$ are defined similarly with past-directed curves. We also set $\H_{\leq}^2 := \{(q_0, q) \in \H^2 \mid \ell(q_0, q) \geq 0 \}$ and $\H_{\ll}^2 := \{(q_0, q) \in \H^2 \mid \ell(q_0, q) > 0)\}$.

A maximising geodesic is a future-directed causal curve $\gamma : \interval{0}{T} \to \H$ joining $q_0$ to $q$ in $\H$ such that $\L(\gamma) = \tau(q_0, q)$. The search for a maximising geodesic $\gamma : \interval{0}{T} \to \H$ joining $q_0 \in \H$ to $q \in J^+(q_0)$ can be stated as the following optimal control problem:
\begin{equation}
    \tag{C}
    \label{nonspacelikeOCP}
    \begin{cases}
    \dot{\gamma}(t) = u_1(t) X(\gamma(t)) + u_2(t) Y(\gamma(t)) \\
    u(t) \in U := \left\{ (u_1, u_2) \in \R^2 \mid u_1 \geq \abs{u_2} \right\} \text{ for a.e. } t \in \interval{0}{T}\\
    \mathrm{L}(\gamma) \to \max \\
    \gamma(0) = q_0, \ \gamma(1) = q
    \end{cases}.
\end{equation}
Since the sub-Lorentzian structure of $\H$ is induced by left-invariant vector fields, the concepts defined above are invariant under left translations too. For instance, we have $\tau(q_0, q) = \tau(L_{q'}(q_0), L_{q'}(q))$,  $J^{\pm}(q_0) = J^{\pm}(L_q(q_0))$, etc.

For $\nu \in \R$, the sub-Lorentzian (controlled) Hamiltonian is the map $\mathscr{H} : U \times \mathrm{T}^*(\mathbb{H}) \times \mathbb{R} \to \mathbb{R}$ given by
\[
\mathscr{H}^\nu(u, \lambda) := u_1 h_X(\lambda) + u_2 h_Y(\lambda) - \nu \sqrt{u_1^2 - u_2^2}, \ \text{ for } u = (u_1, u_2) \in U \text{ and } \lambda \in \mathrm{T}^*(\mathbb{H}),
\]
where for a vector field $V$ on $\mathds{H}$, we have denoted by $h_V$ the function on $\T^*(\mathds{H})$ defined by $h_V(\lambda) := \langle \lambda, V(\pi(\lambda)) \rangle$. Note that $h_X$, $h_Y$, and $h_Z$ form a system of coordinates on each of the fibers of $\T^*(\mathds{H})$.

For a fixed $u \in U$ and $\nu \in \mathbb{R}$, the map $\lambda \mapsto \mathscr{H}(u, \lambda, \nu)$ is smooth and the symplectic gradient, or Hamiltonian vector field, $\overrightarrow{\mathscr{H}}^\nu(\cdot, u) : \T^*(\mathds{H}) \to \T^*(\mathds{H})$ is then the unique vector field on $\T^*(\mathds{H})$ satisfying
\[
\sigma_\lambda(\cdot, \overrightarrow{\mathscr{H}}^\nu(\lambda, u)) = \diff_{\lambda} \mathscr{H}^\nu(\cdot, u), \text{ for all } \lambda \in \T(\T^*(\H)),
\]
where $\sigma$ is the symplectic form of $\mathrm{T}^*(\mathbb{H})$.

We now state Pontryagin's maximum principle below (following \cite[Chapter 12]{Agrachev2004}), a first order necessary condition for solutions to the optimal control problem \cref{nonspacelikeOCP}.

\begin{theorem}[Pontryagin's Maximum Principle]
    \label{thm:PMP}
    Assume that the curve $\gamma : \interval{0}{T} \to \mathbb{H}$ controlled by $u \in \mathrm{L}^{\infty}(\interval{0}{T}, U)$ is optimal for the control problem given in \cref{nonspacelikeOCP}. Then, there exists a Lipschitz curve $\lambda : \interval{0}{T} \to \mathrm{T}^*(\mathbb{H})$, and a number $\nu \in \{-1, 0\}$ such that $\pi(\lambda(t)) = \gamma(t)$ for all $t \in \interval{0}{T}$ and
    \begin{theoenum}
    \item $\dot{\lambda}(t) = \overrightarrow{\mathscr{H}}^\nu(u(t), \lambda(t))$;
    \item $\mathscr{H}^\nu(u(t), \lambda(t)) = \max_{u \in \overline{U}} \mathscr{H}^\nu(u, \lambda(t)) = \mathrm{const}$;
    \item If $\nu = 0$, then $\lambda(t) \neq 0$ for all $t \in \interval{0}{T}$.
    \end{theoenum}
\end{theorem}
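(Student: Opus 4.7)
The plan is to reduce the statement to the classical Pontryagin maximum principle for Mayer problems, as presented in \cite[Chapter 12]{Agrachev2004}. First I would rewrite \cref{nonspacelikeOCP} in Mayer form by augmenting the state: set $\tilde{\gamma}(t) = (\gamma(t), z(t)) \in \H \times \R$ evolving according to $\dot{\gamma} = u_1 X(\gamma) + u_2 Y(\gamma)$ and $\dot z = \sqrt{u_1^2 - u_2^2}$ with $z(0) = 0$. Maximising the length $\L(\gamma)$ is then equivalent to maximising the terminal coordinate $z(T)$ subject to the endpoint constraints $\gamma(0) = q_0$, $\gamma(T) = q$.

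Next I would carry out the standard needle-variation analysis. For an optimal control $u$ and a Lebesgue point $t_0 \in \ointerval{0}{T}$, replacing $u$ on a small interval $\linterval{t_0 - \varepsilon}{t_0}$ by a constant $v \in U$ and propagating the perturbation through the flow to time $T$ produces, as $\varepsilon \to 0$ and the pair $(t_0, v)$ varies, a closed convex cone $K \subseteq \T_{\tilde{\gamma}(T)}(\H \times \R)$ of infinitesimally reachable variations. Optimality forces the direction $\partial_z$ to miss the interior of $K$ relative to the fibre $\T_q(\H) \times \R$, otherwise one could strictly increase $z(T)$ while preserving $\gamma(T) = q$. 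A Hahn-Banach separation then yields a nonzero covector $\tilde{\lambda}(T) = (\lambda(T), -\nu)$, nonnegative on $K$, whose last component satisfies $-\nu \leq 0$. After normalising to $\nu \in \{0, -1\}$ and propagating $\tilde{\lambda}$ backwards via the Hamiltonian flow of the augmented system, I obtain (i) and (ii); constancy in time of the maximised Hamiltonian follows from the absence of explicit time-dependence in $\mathscr{H}^\nu$. For (iii), if $\nu = 0$ and $\lambda(t_*) = 0$ for some $t_*$, then $\tilde{\lambda}(t_*) = 0$, which propagates via the Hamiltonian ODE to $\tilde{\lambda}(T) = 0$, contradicting the nontriviality obtained from the separation step.

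The main technical obstacle is the geometry of the control set $U = \{u_1 \geq \abs{u_2}\}$, which is a closed convex cone with a corner at the origin, and the fact that the running cost $\sqrt{u_1^2 - u_2^2}$ is not smooth on $\partial U$ (the lightcone). Needle variations must produce perturbations that remain in $U$, so extra care is required when $u(t)$ itself lies on $\partial U$, a scenario that genuinely occurs in the sub-Lorentzian setting whenever the optimal curve is lightlike. This is precisely why condition (ii) is formulated as a global maximum over $\overline{U}$ rather than as the vanishing of $\partial_u \mathscr{H}^\nu$: the latter would be ill-posed on the lightlike boundary, while the former passes to the closure without difficulty and is stable under the limiting procedure of the needle argument.
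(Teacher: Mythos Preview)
The paper does not supply its own proof of this theorem: it is simply quoted as a black box from \cite[Chapter 12]{Agrachev2004}, so there is no argument in the paper to compare against. Your sketch is a faithful outline of the standard Mayer-form reduction and needle-variation proof found in that reference, and nothing in it is wrong; in particular your remark about the non-smoothness of the running cost on $\partial U$ and the reason for phrasing (ii) as a pointwise maximum rather than a first-order condition is well taken. One small point: in the paper $U = \{u_1 \geq \abs{u_2}\}$ is already closed, so the bar in $\overline{U}$ is redundant, and the more delicate issue is not closedness but unboundedness of $U$, which is why \cref{prop:maxH} separately analyses when the supremum is actually attained.
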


A Pontryagin extremal, or simply extremal, is a Lipschitz curve $\lambda : \interval{0}{T} \to \T^*(\H)$ such that there exists $u \in \mathrm{L}^{\infty}(\interval{0}{T}, U)$ for which (i), (ii) and (iii) of \cref{thm:PMP} are satisfied. An extremal is said to be a normal (resp. abnormal) extremal if it satisfies those conditions with $\nu = -1$ (resp. with $\nu = 0$). The characterisation of the extremals of the sub-Lorentzian Heisenberg group was carried out in \cite{sachkov2022sublorentzian}, and we repeat here some of details for the sake of clarity.

\begin{proposition}
	\label{prop:maxH}
    Given $\lambda \in \T^*(\H)$, if the maximum of $u \mapsto \mathscr{H}^\nu(u, \lambda)$ is attained for some $u \in U$ (resp. $u \in \mathrm{int}(U)$), then $\max_{u \in U} \mathscr{H}^\nu(u, \lambda) = 0$ (resp. $\max_{u \in \mathrm{int}(U)} \mathscr{H}^\nu(u, \lambda) = 0$) and $h_X(\lambda) \leq - \abs{h_Y(\lambda)}$ (resp. $h_X(\lambda) < - \abs{h_Y(\lambda)}$). More specifically, the maximum is attained at
    \begin{theoenum}
    \item $(u_1, u_2) = (0, 0)$ if $h_X(\lambda) < - \abs{h_Y(\lambda)}$;
    \item $(u, \mathrm{sgn}(h_Y(\lambda))u)$ for any $u \geq 0$ if $h_X(\lambda) = - \abs{h_Y(\lambda)}$;
    \item any $u \in \mathrm{int}(U)$ if $h_X(\lambda) = h_Y(\lambda) = 0$ and $\nu = 0$;
    \item $u = (u_1, u_2) \in \mathrm{int}(U)$ if $\nu = -1$ and if
    \begin{equation*}
    	h_X(\lambda) = -\frac{u_1}{\sqrt{u_1^2 - u_2^2}}, \text{ and } h_Y(\lambda) =\frac{u_2}{\sqrt{u_1^2 - u_2^2}}.
    \end{equation*}
    In this case, there are unique $A > 0$ and $\psi \in \R$ such that $h_X(\lambda) = - \cosh(\psi)$, $h_Y(\lambda) = \sinh(\psi)$, and $u = (A \cosh(\psi), A \sinh(\psi))$.
    \end{theoenum}
\end{proposition}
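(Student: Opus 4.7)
The central observation is that for fixed $\lambda$, the map $u \mapsto \mathscr{H}^{\nu}(u, \lambda)$ is positively $1$-homogeneous on the cone $U$ for both $\nu \in \{-1, 0\}$: the linear piece $u_1 h_X(\lambda) + u_2 h_Y(\lambda)$ is trivially homogeneous of degree $1$, and $\sqrt{u_1^2 - u_2^2}$ has the same homogeneity. Hence, if a maximum is attained at some $u_0 \in U$, evaluating along the ray $\{t u_0 : t \geq 0\}$ forces this maximum to equal $0$. Testing then along the two light-like boundary rays $u = t(1, \pm 1)$, $t \geq 0$, where the square root vanishes, reduces $\mathscr{H}^{\nu}$ to $t(h_X(\lambda) \pm h_Y(\lambda))$; the required boundedness above on both rays yields the necessary condition $h_X(\lambda) \leq -|h_Y(\lambda)|$.

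Items (i)--(iii) are then quick verifications. For (i), if $h_X(\lambda) < -|h_Y(\lambda)|$ then the linear part is strictly negative on $U \setminus \{0\}$, so any maximiser on $\partial U$ (where the square root vanishes) must be the origin, while a maximiser in $\mathrm{int}(U)$ would have to satisfy the first-order conditions from (iv) below. For (ii), direct substitution of $u = (u_1, \mathrm{sgn}(h_Y(\lambda)) u_1)$, $u_1 \geq 0$, into $\mathscr{H}^{\nu}$ gives $u_1(h_X(\lambda) + |h_Y(\lambda)|) = 0$ regardless of $\nu$. For (iii), $h_X(\lambda) = h_Y(\lambda) = 0$ with $\nu = 0$ makes $\mathscr{H}^{0}$ identically zero on all of $U$.

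The only genuine computation is case (iv). On the open cone $\mathrm{int}(U)$, $\mathscr{H}^{-1}(\cdot, \lambda)$ is smooth in $u$, and the critical-point equations $\partial_{u_1} \mathscr{H}^{-1} = \partial_{u_2} \mathscr{H}^{-1} = 0$ produce
\[
h_X(\lambda) = -\frac{u_1}{\sqrt{u_1^2 - u_2^2}}, \qquad h_Y(\lambda) = \frac{u_2}{\sqrt{u_1^2 - u_2^2}}.
\]
Subtracting squares gives the hyperbolic constraint $h_X(\lambda)^2 - h_Y(\lambda)^2 = 1$ together with $h_X(\lambda) < 0$. Bijectivity of $\sinh : \R \to \R$ produces a unique $\psi \in \R$ with $h_Y(\lambda) = \sinh \psi$; the constraint then forces $h_X(\lambda) = -\cosh \psi$, and $A := \sqrt{u_1^2 - u_2^2} > 0$ gives the parametrisation $u = (A \cosh \psi, A \sinh \psi)$. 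That $\mathscr{H}^{-1}$ vanishes at such a point is automatic from Euler's identity for $1$-homogeneous functions, restoring consistency with the first paragraph.

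I do not foresee a serious obstacle. The only slightly delicate step is the organisation of cases: making sure that (i)--(iv) are split correctly across the two values of $\nu$, and that the interior and boundary analyses fit together; in particular, that whenever $h_X(\lambda) = -|h_Y(\lambda)|$ a whole light-cone ray of maximisers exists, so that the maximiser is never unique in that regime.
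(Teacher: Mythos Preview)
Your proposal is correct and follows essentially the same route as the paper: analyse the boundary rays $u_1 = \pm u_2$ to obtain $h_X(\lambda) \le -|h_Y(\lambda)|$, then handle the interior via the first-order conditions and the hyperbolic parametrisation. Your use of the positive $1$-homogeneity of $\mathscr{H}^\nu(\cdot,\lambda)$ to conclude immediately that any attained maximum equals zero is a slightly cleaner organising principle than the paper's case-by-case verification, but the substance is the same.
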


\begin{proof}
    If the maximum of $u \mapsto \max_{u \in U} \mathscr{H}^\nu(u, \lambda)$ is attained on the set $\{ (u_1, u_2) \mid u_1 = u_2 \geq 0 \}$, then in this case we have
    \begin{equation*}
    	\mathscr{H}^\nu(u, \lambda) = u_1 (h_X(\lambda) + h_Y(\lambda)),
    \end{equation*}
    and a maximum exists if and only if $h_X(\lambda) \leq - h_Y(\lambda)$. In this case, the maximum on this set is zero and is attained at $(u_1, u_2) = (0, 0)$ if $h_X(\lambda) < - h_Y(\lambda)$ and at any value $u_1 = u_2 \geq 0$ if $h_X(\lambda) = - h_Y(\lambda)$. Similarly, if the maximum is attained on the set $\{ (u_1, u_2) \mid u_1 = - u_2 \geq 0 \}$, then we have
    \begin{equation*}
    	\mathscr{H}^\nu(u, \lambda) = u_1 (h_X(\lambda) - h_Y(\lambda)),
    \end{equation*}
    and a maximum exists if and only if $h_X(\lambda) \leq h_Y(\lambda)$. In this case, the maximum on this set is zero and is attained at $(u_1, u_2) = (0, 0)$ if $h_X(\lambda) < h_Y(\lambda)$, and at any value $u_1 = - u_2 \geq 0$ if $h_X(\lambda) = h_Y(\lambda)$. In summary, the maximum on $\partial U$ exists if and only if $h_X(\lambda) \leq - \abs{h_Y(\lambda)}$, and when that is the case, the maximum is zero and is attained at $(u_1, u_2) = (0, 0)$ if $h_X(\lambda) < - \abs{h_Y(\lambda)}$, at $(u, u)$ for any $u \geq 0$ if $h_X(\lambda) = - h_Y(\lambda)$, and at $(u, -u)$ for any $u \geq 0$ if $h_X(\lambda) = h_Y(\lambda)$.

    Now, if a maximum is attained at $(u_1, u_2) \in \mathrm{int}(U)$, the first order condition implies that
    \begin{equation*}
    	h_X(\lambda) = \frac{\nu u_1}{\sqrt{u_1^2 - u_2^2}}, \text{ and } h_Y(\lambda) = -\frac{\nu u_2}{\sqrt{u_1^2 - u_2^2}}.
    \end{equation*}
    If $\nu = 0$, then we must have that $h_X(\lambda) = h_Y(\lambda) = 0$, and the maximum on $\mathrm{int}(U)$ is zero and attained for any $u \in \mathrm{int}(U)$. If $\nu = - 1$, then the maximum is also zero and writing $u_1 = A \cosh(\psi)$ and $u_2 = A \sinh(\psi)$ for the unique corresponding parameters $A > 0$ and $\psi \in \R$ yields
    \begin{equation*}
    	h_X(\lambda) = -\cosh(\psi), \text{ and } h_Y(\lambda) = \sinh(\psi),
    \end{equation*}
    which has a (unique) solution $\psi \in \R$ if and only if $h_X(\lambda)^2 - h_Y(\lambda)^2 = 1$ and $h_X(\lambda) \leq - 1$. Note that $h_X(\lambda) < - \abs{h_Y(\lambda)}$ also in this case.
\end{proof}

For a given Lipschitz curve $\lambda : \interval{0}{T} \to \T^*(\H)$, we will simply write $h_X(t)$, $h_Y(t)$ and $h_Z(t)$ instead of $h_X(\lambda(t))$, $h_Y(\lambda(t))$ and $h_Z(\lambda(t))$ respectively. The condition (i) of \cref{thm:PMP} can therefore be rewritten as
\begin{equation}
\label{eq:HamiltonequationsH}
    \begin{cases}
    \dot{h}_X = \{\mathscr{H}^\nu(\cdot, u), h_X\} = u_2 \{h_Y, h_X\} = -u_2 h_Z,\\
    \dot{h}_Y = \{\mathscr{H}^\nu(\cdot, u), h_Y\} = u_1 \{h_X, h_Y\} = u_1 h_Z,\\
    \dot{h}_Z = \{\mathscr{H}^\nu(\cdot, u), h_Z\} = 0, \\
    \dot{\gamma}(t) = u_1(t) X(\gamma(t)) + u_2(t) Y(\gamma(t)),
    \end{cases}
\end{equation}
where $\{\cdot,\cdot\}$ denotes the usual Poisson bracket on $C^\infty(\T^*(\H))$. Note that $h_Z(t) = h_Z(0)$ is constant by the third equation of \cref{eq:HamiltonequationsH}. We also introduce the ``maximised'' Hamiltonian $H : \T^*(\H) \to \R$ given by
\begin{equation}
\label{eq:HamiltonianH}
	H(\lambda) := \tfrac{1}{2}(h_Y^2(\lambda) - h_X^2(\lambda)).
\end{equation}
Note that this Hamiltonian is left-invariant.

We start by reviewing the abnormal case, following \cite[Theorem 1]{sachkov2022sublorentzian}.

\begin{proposition}
	\label{prop:abnormalextremals}
    Assume $\lambda : \interval{0}{T} \to \T^*(\H)$ is an abnormal extremal, associated with the control $u \in \L^\infty(\interval{0}{T}, U)$, such that the projection $\gamma(t) := \pi(\lambda(t))$ is never locally-constant (i.e. $u|_J \not\equiv 0$ on any sub-interval $J$ of $\interval{0}{T}$). Then there exists $T^* \in \interval{0}{T}$ such that
        \begin{theoenum}
    \item if $h_Z(t) = \mathrm{const} = 0$, then $(h_X(t), h_Y(t)) = \mathrm{const} \neq (0, 0)$, $h_X(t) = -\abs{h_Y(t)}$, and $u_1(t) = \mathrm{sgn}(h_Y(t)) u_2(t)$.
    \item if $h_Z(t) = \mathrm{const} > 0$, then
    \begin{equation*}
    	h_X(t) =
    	\begin{cases}
    		h_Y(t) < 0 & \text{for } t \in \ointerval{0}{T^*} \\
    		-h_Y(t) < 0 & \text{for } t \in \ointerval{T^*}{T}
    	\end{cases},
    	\text{ and }
    	u_1 =
    	\begin{cases}
    		- u_2 & \text{on } \ointerval{0}{T^*} \\
    		u_2 & \text{on } \ointerval{T^*}{T}
    	\end{cases}.
    \end{equation*}
    \item if $h_Z(t) = \mathrm{const} < 0$, then
    \begin{equation*}
    	h_X(t) =
    	\begin{cases}
    		- h_Y(t) < 0 & \text{for } t \in \ointerval{0}{T^*} \\
    		h_Y(t) < 0 & \text{for } t \in \ointerval{T^*}{T}
    	\end{cases},
    	\text{ and }
    	u_1 =
    	\begin{cases}
    		u_2 & \text{on } \ointerval{0}{T^*} \\
    		- u_2 & \text{on } \ointerval{T^*}{T}
    	\end{cases}.
    \end{equation*}
    \end{theoenum}
    In particular, $H(\lambda(t)) = 0$ and we have $u(t) \in \partial U$ for all $t \in \interval{0}{T}$.
\end{proposition}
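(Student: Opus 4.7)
The plan is to apply Pontryagin's maximum principle \cref{thm:PMP} in the abnormal regime ($\nu = 0$), combined with the classification of maximisers from \cref{prop:maxH}. The first step is to show that $u(t) \in \partial U$, that $h_X(t) = -\abs{h_Y(t)}$, and consequently that $H(\lambda(t)) = \tfrac{1}{2}(h_Y^2 - h_X^2) = 0$ for all $t$. If $u(t) = (0,0)$ on a subinterval, $\gamma$ is locally constant there, contrary to the hypothesis; this rules out case (i) of \cref{prop:maxH}. The only competitor to case (ii) is then case (iii), where $h_X \equiv h_Y \equiv 0$ on a subinterval; but Hamilton's equations \eqref{eq:HamiltonequationsH} give $u_2 h_Z = 0$ and $u_1 h_Z = 0$ there, while \cref{thm:PMP}(iii) together with $h_X = h_Y = 0$ forces $h_Z \neq 0$, so $u_1 = u_2 = 0$ a.e., contradicting non-constancy.

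Next I would split on the value of the constant $h_Z$. In the case $h_Z = 0$, Hamilton's equations immediately yield $\dot h_X = \dot h_Y = 0$, so $(h_X, h_Y)$ is constant; it cannot be $(0,0)$, for otherwise $\lambda \equiv 0$ in contradiction with \cref{thm:PMP}(iii). Together with $h_X = -\abs{h_Y}$ and \cref{prop:maxH}(ii), this gives exactly item (i) of the proposition. In the case $h_Z \neq 0$, I use the formula $u_2 = \mathrm{sgn}(h_Y)\, u_1$ coming from \cref{prop:maxH}(ii) and plug it into $\dot h_Y = u_1 h_Z$: this forces $h_Y$ to be non-decreasing if $h_Z > 0$ and non-increasing if $h_Z < 0$. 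Furthermore, $h_Y$ cannot be constant on a nondegenerate subinterval, for then $u_1 h_Z = 0$ would give $u_1 = 0$ and hence $u_2 = 0$, contradicting non-constancy. So $h_Y$ is strictly monotone and crosses zero at most once, at a unique switching time $T^* \in \interval{0}{T}$.

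The final step is simply to read off the signs in each of the two sub-cases. For $h_Z > 0$, strict monotonicity forces $h_Y < 0$ on $\ointerval{0}{T^*}$ (so $h_X = h_Y$ and $u_1 = -u_2$ by \cref{prop:maxH}(ii)) and $h_Y > 0$ on $\ointerval{T^*}{T}$ (so $h_X = -h_Y$ and $u_1 = u_2$), giving item (ii); for $h_Z < 0$ the roles are reversed, giving item (iii). The main subtlety is precisely the monotonicity-plus-non-constancy argument in the second step: it simultaneously excludes a plateau of $h_Y$ at zero (where \cref{prop:maxH}(iii) would otherwise permit arbitrary controls and destroy the clean structure) and any oscillation of $h_Y$ back across zero, leaving only the single-switch picture stated in the proposition.
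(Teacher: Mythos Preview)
Your proposal is correct and follows essentially the same approach as the paper. The only difference is organisational: you rule out cases (i) and (iii) of \cref{prop:maxH} globally before splitting on the sign of $h_Z$, whereas the paper splits on $h_Z$ first and handles those cases inside each branch; both arguments hinge on the same observation that $\dot h_Y = u_1 h_Z$ with $u_1 \geq 0$ forces monotonicity of $h_Y$, and non-constancy upgrades this to strict monotonicity. One cosmetic remark: for the monotonicity of $h_Y$ you do not actually need to invoke $u_2 = \mathrm{sgn}(h_Y)\,u_1$, since $\dot h_Y = u_1 h_Z$ already only involves $u_1 \geq 0$.
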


\begin{proof}
    If $h_Z(t) = 0$, then Hamilton's equations \cref{eq:HamiltonequationsH} imply that both $h_X(t)$ and $h_Y(t)$ are constant. By the non-triviality property (iii) of \cref{thm:PMP}, $h_X(t)$ and $h_Y(y)$ cannot be both constantly zero. If $h_X(t) < -\abs{h_Y(t)}$, then \cref{prop:maxH} would imply that the extremal $\lambda(t)$ is trivial. Therefore, it must hold that $h_X(t) = -\abs{h_Y(t)}$ and $u_2(t) = \mathrm{sgn}(h_Y(t)) u_1(t)$.

	If $h_Z(t) > 0$, then \cref{eq:HamiltonequationsH} implies that $t \mapsto h_Y(t)$ is non-decreasing since $u_1(t) \geq 0$. If $h_Y(t)$ is constant on a sub-interval $J$ of $\interval{0}{T}$, then $u_1(t)$ is zero on $J$ and thus $u_2(t)$ is also zero on $J$, which would imply that $\gamma(t)$ is trivial on $J$. Therefore, $h_Y(t)$ is increasing in $t$ and may assume the value $h_Y(t) = 0$ at one $t \in \interval{0}{T}$ at most. This shows that the control $u(t)$ is not of the type (iii) in \cref{prop:maxH}. Assume now that $h_X(t_0) < - |h_Y(t_0)|$ for some $t_0 \in \interval{0}{T}$, then by continuity $h_X(t) < - |h_Y(t)|$ for all $t$ in a sub-interval of $\interval{0}{T}$ containing $t_0$. This would yield that $u_1(t) = u_2(t) = 0$ by (i) of \cref{prop:maxH} and so $\gamma(t)$ would be trivial on $J$. Therefore, $\lambda(t)$ must satisfy (ii) of \cref{prop:maxH}: $h_X(t) = - \abs{h_Y(t)}$ for all $t \in \interval{0}{T}$ and $u_1(t) = \mathrm{sgn}(h_Y(t)) u_2(t)$. The case $h_Z(t) < 0$ is treated similarly.
\end{proof}

We next characterise normal extremals, following \cite[Theorem 2]{sachkov2022sublorentzian}. By invariance under left translations, a curve $\gamma$ is a length-maximiser if and only if $q \cdot \gamma$ is a length-maximiser too. It is therefore enough to consider curves starting at the identity $e$ of the group $\H$.

\begin{proposition}
	\label{prop:normalextremals}
    Let $H > 0$ and $\lambda : \interval{0}{T} \to \T^*(\H)$ be a normal extremal associated with $u \in \L^\infty(\interval{0}{T}, \overline{U})$ such that the projection $\gamma(t) := \pi(\lambda(t))$ is never locally-constant and starts at $e =(0, 0, 0)$. Then, either $\lambda$ is also abnormal and of the form given in \cref{prop:abnormalextremals}, or else it can be reparametrised by constant speed $\sqrt{2H}$ and its reparametrisation satisfies $h_Z(t) = c$, $h_X(t) = - \cosh(\sqrt{2H} c t + \psi)$, $h_Y(t) = \sinh(\sqrt{2H} c t + \psi)$, and $\gamma(t) = (x(t), y(t), z(t))$ with
	\begin{equation}
	\label{eq:xyzcnotzero}
	\begin{dcases}
			x(t) = \dfrac{\sinh(\sqrt{2H} c t + \psi) - \sinh(\psi)}{c} \\
			y(t) = \dfrac{\cosh(\sqrt{2H} c t + \psi) - \cosh(\psi)}{c} \\
			z(t) = \dfrac{\sinh(\sqrt{2H} c t) - \sqrt{2H} c t}{2 c^2}
	\end{dcases}
	\end{equation}
	if $c \neq 0$, or $x(t) = \sqrt{2H} \cosh(\psi) t$, $y(t) = \sqrt{2H} \sinh(\psi) t$, and $z(t) = 0$ if $c = 0$, where $\psi \in \R$ is uniquely determined by $h_X(0) = - \cosh(\psi)$ and $h_Y(0) = \sinh(\psi)$. There is a unique control associated to this strictly normal extremal, which is given by
 \[
 u(t) = (-\sqrt{2H} h_X(t), \sqrt{2H} h_Y(t)) = (\sqrt{2H} \cosh(\sqrt{2 H} c t + \psi), \sqrt{2H} \sinh(\sqrt{2 H} c t + \psi)).
 \]
 In particular, along a strictly normal extremal, it holds $H(\lambda(t)) = \tfrac{1}{2}$ for all $t \in \interval{0}{T}$.
\end{proposition}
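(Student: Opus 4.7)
The plan is to start by applying Pontryagin's Maximum Principle with $\nu = -1$ and carrying out a case analysis based on \cref{prop:maxH}. Since the projection $\gamma$ is never locally constant, case (i) of that proposition (which forces $u \equiv 0$) is ruled out on any sub-interval. The remaining possibilities for the maximiser are the boundary case (ii), where $h_X(t) = -\lvert h_Y(t) \rvert$ and $u$ is of the form $(u_1, \mathrm{sgn}(h_Y) u_1)$, and the interior case (iv), where $u \in \mathrm{int}(U)$ and $h_X = -u_1/\sqrt{u_1^2 - u_2^2}$, $h_Y = u_2/\sqrt{u_1^2 - u_2^2}$. In case (ii) the structural relations between $h_X$, $h_Y$, and $u$ match those of the abnormal classification in \cref{prop:abnormalextremals}, so the extremal is also abnormal and the first alternative of the statement applies. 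Otherwise, case (iv) must hold throughout $\interval{0}{T}$: the identity $h_X^2 - h_Y^2 = 1$ valid in the interior regime is preserved by Hamilton's equations \cref{eq:HamiltonequationsH} (a direct calculation using $u_2 h_X + u_1 h_Y = 0$), so by continuity of the Lipschitz extremal the interior regime cannot be exited towards the boundary, where $h_X^2 = h_Y^2$ instead.

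In the strictly normal regime, I would reparametrise the curve by constant speed $\sqrt{u_1^2 - u_2^2} = \sqrt{2H}$, so that the formulas of (iv) invert to $u = (-\sqrt{2H}\, h_X, \sqrt{2H}\, h_Y)$. Substituting this and $h_Z(t) \equiv c$ into Hamilton's equations \cref{eq:HamiltonequationsH} produces the closed linear system
\[
\dot{h}_X = -\sqrt{2H}\, c\, h_Y, \qquad \dot{h}_Y = -\sqrt{2H}\, c\, h_X.
\]
The identity $h_X^2 - h_Y^2 = 1$ invites the parametrisation $h_X = -\cosh(\theta)$, $h_Y = \sinh(\theta)$, under which both equations collapse to the single ODE $\dot{\theta} = \sqrt{2H}\, c$. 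Integrating with the initial condition $\theta(0) = \psi$ determined by $h_X(0) = -\cosh(\psi)$ and $h_Y(0) = \sinh(\psi)$ gives $\theta(t) = \sqrt{2H}\, c\, t + \psi$, and hence the stated formulas for $h_X$, $h_Y$, and $u$.

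The trajectory $\gamma(t)$ is then obtained by integrating $\dot{x} = u_1$, $\dot{y} = u_2$, and $\dot{z} = \tfrac{1}{2}(u_2 x - u_1 y)$ in exponential coordinates, starting from $\gamma(0) = e$. If $c = 0$, the controls are constant and the integration is immediate, with $\dot{z}$ vanishing identically. If $c \neq 0$, elementary primitives of $\cosh$ and $\sinh$ produce the formulas for $x(t)$ and $y(t)$, and the hyperbolic identity $\cosh(A)\cosh(B) - \sinh(A)\sinh(B) = \cosh(A-B)$ simplifies $u_2(t) x(t) - u_1(t) y(t)$ to $\tfrac{\sqrt{2H}}{c}\left( \cosh(\sqrt{2H}\, c\, t) - 1 \right)$, whose primitive yields the expression for $z(t)$. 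The uniqueness of the control follows from the first-order relations in case (iv) of \cref{prop:maxH}, and the value $H(\lambda(t)) = \tfrac{1}{2}$ is a direct consequence of $h_X^2 - h_Y^2 = 1$.

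The main obstacle is the dichotomy argument in the first paragraph: one has to exclude the possibility that the control oscillates between $\mathrm{int}(U)$ and $\partial U$ along a single strictly normal trajectory. This is where the conservation of $h_X^2 - h_Y^2 = 1$ in the interior regime, combined with continuity of $\lambda$ and the non-local-constancy of $\gamma$, is essential; without these inputs the integration in the later paragraphs would only apply piecewise and the global form of $\gamma$ could be considerably more intricate.
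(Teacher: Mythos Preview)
Your proposal is correct and follows essentially the same route as the paper's own proof. The paper also splits into the boundary case $h_X(t) = -\lvert h_Y(t)\rvert$ for all $t$ (yielding the abnormal extremals) versus the existence of a point where $h_X(t_0) < -\lvert h_Y(t_0)\rvert$; in the latter case it takes the maximal interval $I$ on which this strict inequality holds, observes that non-local-constancy forces case (iv) of \cref{prop:maxH} almost everywhere on $I$, and then uses continuity of $\lambda$ to conclude $I = \interval{0}{T}$. Your version of this step, via the incompatibility of $h_X^2 - h_Y^2 = 1$ (interior regime) with $h_X^2 = h_Y^2$ (boundary regime), is the same mechanism phrased slightly differently; note that the identity $h_X^2 - h_Y^2 = 1$ already holds pointwise in regime (iv) directly from the formulas, so the conservation computation you mention is not strictly needed. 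The remaining steps --- reparametrisation to constant speed $\sqrt{2H}$, the substitution $u = (-\sqrt{2H}\,h_X, \sqrt{2H}\,h_Y)$ into \cref{eq:HamiltonequationsH}, the hyperbolic parametrisation of $(h_X, h_Y)$, and the explicit integration for $(x,y,z)$ --- match the paper's argument.
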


\begin{proof}
    The normal assumption means that $\nu = - 1$ in Pontryagin's Maximum Principle (\cref{thm:PMP}). If $h_X(t) = - \abs{h_Y(t)}$ for all $t \in \interval{0}{T}$, we obtain exactly the same (abnormal) extremals of \cref{prop:abnormalextremals}. Suppose therefore that there is a $t_0 \in \interval{0}{T}$ such that $h_X(t_0) < - \abs{h_Y(t_0)}$, and consider the maximal sub-interval $I$ of $\interval{0}{T}$ containing $t_0$ such that $h_X(t) < - \abs{h_Y(t)}$ for all $t \in I$ (which exists by continuity). For a given $t \in I$, we must then have (i) or (iv) of \cref{prop:maxH}. Since we are assuming that $\gamma(t)$ is never locally-constant, $u(t)$ can only vanish on a null subset of $I$, and thus by continuity of $\lambda(t)$, we deduce that (iv) of \cref{prop:maxH} must hold for all $t \in I$, and so $I = \interval{0}{T}$. \cref{prop:abnormalextremals} implies that this extremal is not abnormal because $H(\lambda(t)) = 1/2 \neq 0$. The reparametrisation $\varphi : \interval{0}{\L(\gamma)/\sqrt{2H}} \to \interval{0}{T}$ for which $\tilde{\gamma} := \gamma \circ \varphi :  \interval{0}{\L(\gamma)/\sqrt{2H}} \to \H$ has constant speed $\sqrt{2H}$ and the control $\tilde{u} : \interval{0}{\L(\gamma)/\sqrt{2H}} \to \R^2$ of $\tilde{\gamma}$ satisfies
	\[
        \dot{\varphi} = \frac{\sqrt{2H}}{\sqrt{(u_1 \circ \varphi)^2 - (u_2\circ \varphi)^2}}, \text{ and }
	\tilde{u} = \frac{\sqrt{2H}(u \circ \varphi)}{\sqrt{(u_1 \circ \varphi)^2 - (u_2\circ \varphi)^2}}.
	\]
	Thus $\tilde{u}_1 = -\sqrt{2H}\tilde{h}_X$ and $\tilde{u}_2 = \sqrt{2H} \tilde{h}_Y$ by (iv) of \cref{prop:maxH}.
	Hamilton's equation \cref{eq:HamiltonequationsH} becomes
	\begin{equation}
 \label{eq:HamiltonequationsHPMP}
	\begin{cases}
    \dot{\tilde h}_X = - \sqrt{2H} \ \tilde h_Y \tilde h_Z,\\
    \dot{\tilde h}_Y = -\sqrt{2H} \ \tilde h_X \tilde h_Z,\\
    \dot{\tilde h}_Z = 0, \\
    \dot{\tilde \gamma}(t) = \sqrt{2H} \left(-\ \tilde h_X(t) X(\tilde \gamma(t)) +\ \tilde h_Y(t) Y(\tilde \gamma(t)) \right).
    \end{cases}
	\end{equation}
Solving this system of differential equations finishes the proof of the claim.

\end{proof}

Based on the observations above, we introduce a sub-Lorentzian exponential map in the following way. For $t \in \R$ and $(q_0, \lambda_0) \in \T^*(\H)$, we denote by
\[
\lambda(t) := e^{t \overrightarrow{H}}(q_0, \lambda_0)
\]
the curve on $\T^*(\H)$ satisfying
\begin{equation}
\label{eq:hamiltoneqmaximised}
    \dot \lambda (t) = \overrightarrow{H}(\lambda(t)) \text{ with initial data }\lambda(0) = (q_0, \lambda_0).
\end{equation}
The projection of $\lambda(t)$ onto $\H$ will naturally be denoted by $\gamma(t) := \pi(\lambda(t))$. Written in the coordinates induced by the vector fields $X, Y$, and $Z$, the Hamiltonian equation \cref{eq:hamiltoneqmaximised} can be written as
\begin{equation}
\label{eq:HamiltonequationsH2}
    \begin{cases}
    \dot{h}_X = -h_Y h_Z,\\
    \dot{h}_Y = - h_X h_Z,\\
    \dot{h}_Z = 0, \\
    \dot{\gamma}(t) = - h_X(t) X(\gamma(t)) + h_Y(t) Y(\gamma(t)).
    \end{cases}
\end{equation}
Note that if $\lambda : \interval{0}{T} \to \T(\H)$ solves \cref{eq:HamiltonequationsH2}, then $H(\lambda(t))$ is constant for all $t \in \interval{0}{T}$ since
\begin{equation*}
	\frac{\diff}{\diff t} H(\lambda(t)) = h_Y(t) \dot h_Y(t) - h_X(t) \dot h_X(t) = 0.
\end{equation*}
Furthermore, the ``maximised'' Hamiltonian $H$ is homogeneous, in the sense that $H(q, \alpha \lambda_0) = \alpha^2 H(q, \lambda_0)$ for all $\alpha \in \R$ and all $(q, \lambda_0) \in \T^*(\H)$. In particular, if $\lambda(t)$ is a solution to \cref{eq:HamiltonequationsH2}, then $\lambda_{\alpha}(t) := \alpha \lambda(\alpha t)$ solves \cref{eq:HamiltonequationsH2} too, and thus
\[
e^{t \overrightarrow{H}}(q, \alpha \lambda_0) = \alpha e^{\alpha t \overrightarrow{H}}(q, \lambda_0).
\]
For the sake of completeness, we write down explicitly the solution to \cref{eq:HamiltonequationsH2}. Starting from the origin with initial covector $\lambda_0 = u_0 \diff x + v_0 \diff y + w_0 \diff z$, we get

\begin{equation}
\label{eq:xyzcnotzero2}
	\begin{dcases}
            h_Z(t) = w_0\\
            h_X(t) = u_0 \cosh(w_0 t) - v_0 \sinh(w_0 t)\\
            h_Y(t) = v_0 \cosh(w_0 t) - u_0 \sinh(w_0 t)
	\end{dcases}
 \ \text{, } \ \
 \begin{dcases}
			x(t) = \frac{v_0 (\cosh(w_0 t) - 1) - u_0 \sinh(w_0 t)}{w_0}\\
			y(t) = \frac{v_0 \sinh(w_0 t) - u_0 (\cosh(w_0 t) - 1)}{w_0} \\
			z(t) = (u_0^2 - v_0^2) \frac{\sinh(w_0 t) - w_0 t}{2 w_0^2}
	\end{dcases}.
\end{equation}

Since the flow of $H$ is an analytic function of the initial data, the above equations are understood by taking the limit $w_0 \to 0$ if $w_0 = 0$.

\begin{definition}
	The sub-Lorentzian exponental map of the Heisenberg group is defined by the projection of the flow of $H$ onto $\H$, i.e. given $t \in \R$
	\begin{equation}
		\exp^t_{q} : \T^*_{q}(\H) \to \H : \lambda_0 \mapsto \pi\left(e^{t\overrightarrow{H}}(q, \lambda_0)\right).
	\end{equation}
	As usual, $\exp_{q}$ will stand for $\exp^1_{q}$.
\end{definition}
The homogeneity of the Hamiltonian $H$ also implies that $\exp_{q}(t \lambda_0) = \exp^t_{q}(\lambda_0)$. By left-invariance, we also have that
\begin{equation}
	\label{eq:expqtoexpe}
	\exp_q(t \lambda_0) = q \cdot \exp_e(t (L_q)_e^*[\lambda_0]),
\end{equation}
for every $q \in \H$, $t \in \R$ and $\lambda_0 \in \T^*_q(\H)$. Curves $\gamma(t) = \exp_{q}(t \lambda_0)$ with $H(q, \lambda_0) > 0$ (resp. $< 0$) are timelike (resp. spacelike) curves, and those with $h_X(\lambda_0) < 0$ (resp. $> 0$) are future-directed (resp. past-directed). \cref{prop:normalextremals} shows that when $(q, \lambda_0) \in \T^*(\H)$, and $H := H(q, \lambda_0) > 0$, and $h_X(\lambda_0) < 0$, the curve
\[
t \mapsto \exp_{q}(t \lambda_0/\sqrt{2 H})
\]
is the projection of a strictly normal extremal parametrised by arclength and that
\[
t \mapsto \exp_q(t \lambda_0) = \exp_q^{\sqrt{2 H} t}(\lambda_0/\sqrt{2 H})
\]
is its reparametrisation by constant speed $\sqrt{2 H}$.

\begin{remark}
    Let $\gamma : \interval{0}{T} \to \mathbb{H}$ be a curve parametrised by constant speed $\sqrt{2 H}$ such that it is the projection of a strictly normal extremal $\lambda : \interval{0}{T} \to \T^*(\mathbb{H})$. Then, there is a $(q, \lambda_0) \in \T^*(\H)$ such that $\gamma(t) = \exp_q(t \lambda_0)$. Confusingly, we have
    \[
    \langle e^{t\overrightarrow{H}}(q_0, \lambda_0), V(\gamma(t)) \rangle = \sqrt{2 H} \ h_V(t), \text{ for } V = X, Y, \text{ and } Z.
    \]
    This is evident when comparing \cref{eq:HamiltonequationsHPMP} with \cref{eq:HamiltonequationsH2}. The Hamiltonian flow of $H$ includes both past- and future-directed curves, and it rescales the vertical component of the Pontryagin extremals from \cref{thm:PMP}. The authors of \cite{sachkov2022sublorentzian} define their exponential map by restricting its domain of definition to covectors that generate arclength curves, which avoids this rescaling. In contrast, we choose to define the sub-Lorentzian exponential map on the entire cotangent space, as this allows us to use the homogeneity of the Hamiltonian $H$ to simplify certain expressions.
\end{remark}

Let us summarise the results from \cite[Theorems 3, 7 and 8, Corollary 2]{sachkov2022sublorentzian} concerning the optimality of extremals.

\begin{theorem}
    \label{thm:geodesicspace}
    The sub-Lorentzian exponential map
    \[
    \exp_{q_0} : C_{q_0}(\H) := \left\{ \lambda_0 \in \T_{q_0}^*(\H) \mid h_X(\lambda_0) < - \abs{h_Y(\lambda_0)} \right\} \to I^+(q_0)
    \]
    is a real-analytic diffeomorphism. Furthermore, the future-directed timelike curves $\gamma(t) := \exp_{q_0}(t \lambda_0)$ are maximising geodesics for all $\lambda_0 \in C_{q_0}(\H)$ and all $t > 0$. For any $q \in J^+(q_0)$, there is a unique, up to reparametrisation, sub-Lorentzian length-maximiser that connects $q_0$ to $q$:
    \begin{theoenum}
    \item If $q \in I^+(q_0)$, then that trajectory is the future-directed timelike strictly normal trajectory given by $\gamma : \interval{0}{1} \to \H : t \mapsto \exp_{q_0}(t \lambda_0)$ where $\lambda_0 = \exp_{q_0}^{-1}(q)$.
    \item If $q_0 = (0, 0, 0)$ and $q = (x_1, y_1, z_1) \in J^+(q_0) \setminus I^+(q_0)$, then that trajectory is the future-directed strictly abnormal curve given by
    \begin{enumerate}[label=\arabic*), wide, labelindent=0pt]
        \item $\gamma : \interval{0}{x_1} \to \H : t \mapsto (t, \pm t, 0)$ if $z_1 = 0$;
        \item $\gamma(t) = (t, -t, 0) \mathbb{1}_{\interval{0}{\frac{x_1 - y_1}{2}}} + (t, t - (x_1 - y_1), \frac{x_1 - y_1}{2}(t - \frac{x_1 - y_1}{2}))\mathbb{1}_{\interval{\frac{x_1 - y_1}{2}}{x_1}}$ if $z_1 > 0;$
        \item $\gamma(t) = (t, t, 0) \mathbb{1}_{\interval{0}{\frac{x_1 + y_1}{2}}} + (t, x_1 + y_1 - t, -\frac{x_1 + y_1}{2}(t - \frac{x_1 + y_1}{2}))\mathbb{1}_{\interval{\frac{x_1 + y_1}{2}}{x_1}}$ if $z_1 < 0$.
    \end{enumerate}
    \end{theoenum}
\end{theorem}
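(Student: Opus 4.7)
By left-invariance of the sub-Lorentzian structure, the time-separation, and the Hamiltonian flow, I reduce to the case $q_0 = e = (0,0,0)$. Parametrising $\T^*_e(\H)$ by coordinates $(u_0, v_0, w_0)$ via $\lambda_0 = u_0 \diff x + v_0 \diff y + w_0 \diff z$, the admissible domain becomes $C_e(\H) = \{(u_0, v_0, w_0) : u_0 < -|v_0|\}$, and the explicit formulas \cref{eq:xyzcnotzero2} express $\exp_e(\lambda_0)$ analytically in terms of $(u_0, v_0, w_0)$.

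For part (i), the analyticity of $\exp_e$ on $C_e(\H)$ is immediate from \cref{eq:xyzcnotzero2}, the limit $w_0 \to 0$ being analytic in the initial data. I would then prove the diffeomorphism property by exhibiting an analytic inverse: the conserved quantities $H(\lambda_0)$ and $h_Z(\lambda_0) = w_0$ are recoverable from the image, and the remaining hyperbolic angle $\psi$ parametrising $(h_X(0), h_Y(0))$ along the normal extremal is then extracted by inverting \cref{eq:xyzcnotzero2} in the two regimes $w_0 \neq 0$ and $w_0 = 0$. The maximality of $\gamma(t) := \exp_e(t\lambda_0)$ follows by combining Pontryagin's maximum principle with \cref{prop:abnormalextremals}: any future-directed causal competitor joining $e$ to a chronologically separated $q$ projects either a normal or an abnormal extremal, but every abnormal extremal satisfies $H \equiv 0$ and is therefore lightlike with zero Lorentzian length, so it cannot realise $\tau(e, q) > 0$. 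The maximiser must hence be strictly normal, and the bijectivity of $\exp_e$ on $C_e(\H)$ yields the stated uniqueness.

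For part (ii), when $q \in J^+(e) \setminus I^+(e)$ one has $\tau(e, q) = 0$, so every length-maximiser is lightlike and, by the same PMP argument, projects an abnormal extremal classified by the sign of $h_Z$ in \cref{prop:abnormalextremals}. I would integrate \cref{eq:HamiltonequationsH} case by case according to this classification. When $h_Z = 0$, the control is constant on one of the two boundary rays $u_1 = \pm u_2$, producing the straight lightlike segment $(t, \pm t, 0)$ of case 1); when $h_Z \neq 0$, the unique switching time $T^*$ at which $(h_X, h_Y)$ crosses zero yields a piecewise-lightlike curve with a single corner, and matching the endpoint constraints fixes $T^*$ as $(x_1 - y_1)/2$ or $(x_1 + y_1)/2$ according to the sign of $z_1$, giving the formulas in cases 2) and 3). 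Uniqueness up to reparametrisation follows because the sign of $h_Z$, the switching time, and the sign on each lightlike ray are all uniquely determined by the endpoint.

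The main obstacle I anticipate is the precise surjectivity part of (i): verifying from \cref{eq:xyzcnotzero2} that $\exp_e$ sweeps out $I^+(e)$ exactly, with no overshoot or gaps, and that the analytic inverse extends smoothly across the stratum $w_0 = 0$. A secondary subtlety is excluding normal-abnormal concatenations as candidate maximisers in part (ii): here one observes that any strictly normal initial segment would push the curve into $I^+(e)$, after which no causal continuation can return to a point of the lightlike boundary $\partial I^+(e)$ without strictly losing Lorentzian length, so the maximiser must be purely abnormal as described.
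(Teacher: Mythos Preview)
The paper does not supply its own proof of this theorem: it is stated as a summary of \cite[Theorems 3, 7 and 8, Corollary 2]{sachkov2022sublorentzian}, so there is no in-paper argument to compare against. Your outline is close in spirit to what Sachkov does, but it has a genuine gap that you should be aware of.

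Your maximality argument is circular as written. Pontryagin's maximum principle is a \emph{necessary} condition: it tells you that \emph{if} a maximiser exists, then it is the projection of a normal or abnormal extremal. You then correctly rule out abnormals for endpoints in $I^+(e)$ and invoke bijectivity to get uniqueness among normal extremals. But nowhere do you establish that a maximiser exists in the first place. Without existence, the chain ``maximiser $\Rightarrow$ extremal $\Rightarrow$ strictly normal $\Rightarrow$ the one given by $\exp_e$'' never gets off the ground, and in particular you cannot conclude that $\gamma(t) = \exp_e(t\lambda_0)$ is maximising for all $t>0$ rather than merely locally. Sachkov handles this by first proving existence of optimal controls via a Filippov-type compactness argument on the reachable sets (this is his Theorem~3), which is logically prior to the PMP analysis; you should flag this step explicitly. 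Note also that you cannot borrow existence from the global hyperbolicity proved later in the present paper (\cref{prop:HisGloballyHyperbolic}), since that proof already uses the explicit description \cref{eq:J+}--\cref{eq:I+} of $J^\pm, I^\pm$ which is itself a consequence of \cref{thm:geodesicspace}.

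A secondary point: ``bijection'' plus ``analytic'' does not give ``diffeomorphism''; you still need nonvanishing Jacobian or an explicit analytic inverse. Your sketch of recovering $H$, $w_0$, and $\psi$ from $(x,y,z)$ is on the right track, but the actual inversion of \cref{eq:xyzcnotzero2} requires inverting the transcendental map $t \mapsto (\sinh(2t)-2t)/(8\sinh^2 t)$, which is precisely the function $\alpha$ whose inverse $\beta$ appears in the formula for $\tau$ just after \cref{theorem:propertytau}. Checking that this inverse is analytic and extends across $w_0 = 0$ is where the real work in Sachkov's argument lies.
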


\begin{remark}
    Note that \cref{thm:geodesicspace} shows that a maximising geodesic in the sub-Lorentzian Heisenberg group maintains the same causal character along the curve, i.e. it is either a timelike curve or a null curve. In metric spacetime geometry, having a causal character is usually a sign that the spacetime is regular enough; see \cite[Theorem 3.18]{kuzingersamann2018}.
\end{remark}

An explicit form of the causal and chronological future sets can easily be deduced from \cref{thm:geodesicspace}, also see \cite{Grochowski2006} for the details. By left-invariance, it is sufficient to write it down for $q_0 = (0, 0, 0) \in \H$:
\begin{equation}
    \label{eq:J+}
    J^{\pm}(q_0) := \left\{ (x, y, z) \in \H | -x^2 + y^2 + 4 |z| \leq 0, \pm x \geq 0 \right\}
\end{equation}
and
\begin{equation}
    \label{eq:I+}
    I^{\pm}(q_0) := \left\{ (x, y, z) \in \H | -x^2 + y^2 + 4 |z| < 0, \pm x > 0   \right\}.
\end{equation}

We end this part of this section by summarising important properties of the time-separation function $\tau$. The Lorentzian analogue of the following statement can be found in \cite[Theorem 3.6]{McCann2020}.

\begin{theorem}[Regularity of the sub-Lorentzian time-separation]
    \label{theorem:propertytau}
    The function $(q_0, q) \mapsto \tau(q_0, q)$ is continuous on $\H_{\leq}^2$ and real-analytic on $\H_{\ll}^2$. Therefore, it is also both locally semiconvex and locally semiconcave on $\H_{\ll}^2$.
\end{theorem}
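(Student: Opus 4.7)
The plan is to exploit the real-analytic diffeomorphism property of the sub-Lorentzian exponential map supplied by \cref{thm:geodesicspace}, which immediately gives a closed expression for $\tau$ on $\H_{\ll}^2$. Given $(q_0, q) \in \H_{\ll}^2$, \cref{thm:geodesicspace} provides a unique $\lambda_0 := \exp_{q_0}^{-1}(q) \in C_{q_0}(\H)$, and by \cref{prop:normalextremals} the maximising geodesic $t \mapsto \exp_{q_0}(t \lambda_0)$ runs from $t = 0$ to $t = 1$ with constant speed $\sqrt{2 H(q_0, \lambda_0)}$. Its length is then that same constant, so
\[
\tau(q_0, q) = \sqrt{2 H\bigl(q_0, \exp_{q_0}^{-1}(q)\bigr)} \qquad \text{on } \H_{\ll}^2.
\]

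To promote this to real-analyticity, I would observe that $\overrightarrow{H}$ is polynomial in the canonical coordinates $(x, y, z, h_X, h_Y, h_Z)$, as seen from \cref{eq:HamiltonequationsH2}, so its time-one flow is jointly real-analytic in the initial data $(q_0, \lambda_0)$, and hence so is the map $(q_0, \lambda_0) \mapsto (q_0, \exp_{q_0}(\lambda_0))$. By \cref{thm:geodesicspace} this is a diffeomorphism onto $\H_{\ll}^2$, so the analytic inverse function theorem yields joint real-analyticity of $(q_0, q) \mapsto \exp_{q_0}^{-1}(q)$ on $\H_{\ll}^2$; composing with the polynomial $H$ and with the square root (whose argument is strictly positive on $\H_{\ll}^2$) preserves real-analyticity.

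The delicate step will be continuity on the closed set $\H_{\leq}^2$. On the open subset $\H_{\ll}^2$ it is a consequence of real-analyticity, so the issue is at boundary points $(q_0, q) \in \H_{\leq}^2 \setminus \H_{\ll}^2$, where $\tau(q_0, q) = 0$. Lower semicontinuity is automatic by passing to limits of lengths of approximating future-directed causal curves; for upper semicontinuity I would invoke the global hyperbolicity of $\H$ to be established in \cref{prop:HisGloballyHyperbolic}, which is the standard sufficient condition ensuring that a Lorentzian time-separation is continuous up to its causal boundary. A more hands-on alternative would exploit the explicit description \cref{eq:J+}--\cref{eq:I+} of $J^+(q_0)$ and $I^+(q_0)$, together with the explicit null maximisers of \cref{thm:geodesicspace}, to bound $\tau(q_0', q')$ in terms of the ``depth'' of $q_0^{-1} \cdot q$ inside the null cone.

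Finally, local semiconvexity and semiconcavity on $\H_{\ll}^2$ will follow immediately from the real-analyticity established above: any $C^2$ function on an open subset of $\R^n$ is both locally semiconvex and locally semiconcave, with constants controlled by any local bound on its Hessian. The only real obstacle is thus the continuity argument at the null boundary, which is where the causal structure of $\H$ enters in an essential way.
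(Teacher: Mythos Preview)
The paper does not actually prove this theorem: the sentence immediately following the statement cites \cite[Theorem 10]{sachkov2022sublorentzian} for continuity and real-analyticity, and then observes (exactly as you do) that local semiconvexity and semiconcavity follow from smoothness via \cite[Chapter 2]{Cannarsa2004}. Your sketch therefore goes beyond what the paper itself supplies.

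Your real-analyticity argument is correct and is essentially the content of Sachkov's result: express $\tau$ as $\sqrt{2H(q_0,\exp_{q_0}^{-1}(q))}$ via \cref{thm:geodesicspace}, and use analyticity of the Hamiltonian flow and of the inverse exponential map. The semiconvexity/semiconcavity deduction matches the paper verbatim.

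The one place requiring care is your continuity argument at the null boundary. You propose to invoke \cref{prop:HisGloballyHyperbolic} together with the abstract principle that global hyperbolicity forces $\tau$ to be continuous. In the smooth Lorentzian category this is classical, but here you are sub-Lorentzian, so you would need the Lorentzian-length-space version (e.g.\ \cite[Theorem 3.28]{kuzingersamann2018}), which in turn presupposes the full length-space structure established only in \cref{prop:lengthproperty} and the surrounding discussion of \cref{subsection:HeisenbergMetricSpacetime}. None of those later results formally requires the continuity clause of \cref{theorem:propertytau} (only lower semicontinuity enters the pre-length-space axioms), so the argument is not circular; but it does invert the paper's logical order and imports a fair amount of machinery. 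Your ``hands-on alternative'' is closer to what Sachkov actually does and avoids this detour: the explicit formula for $\tau$ displayed just after \cref{theorem:propertytau} makes the boundary limit $\tau \to 0$ an elementary computation once one notes that $\beta(s)/\sinh\beta(s) \to 0$ as $s \to \pm 1/4$.
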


The continuity and real-analyticity parts of the statement above is proven in \cite[Theorem 10]{sachkov2022sublorentzian}. Local semiconvexity and semiconcavity are implied from the smoothness of $\tau$ on $\H_{\ll}^2$, by \cite[Chapter 2]{Cannarsa2004}. For what concerns this work, it is the local semiconvexity of $q \mapsto \tau(q_0, q)$ on $I^+(q_0)$ that will be crucial. There is a useful formula for $\tau$ in \cite[Theorem 9]{sachkov2022sublorentzian}, which we reproduce here:
\begin{equation*}
	\label{eq:tauexplicit}
	\tau(e, (x, y, z)) = \sqrt{x^2 - y^2} \frac{\beta\left(\frac{z}{x^2 - y^2}\right)}{\sinh \beta \left(\frac{z}{x^2 - y^2}\right)}.
\end{equation*}
where $\beta : \ointerval{-1/4}{1/4} \to \R$ is the inverse function to the diffeomorphism
\[
\alpha(t) := \frac{\sinh(2 t) - 2 t}{8 \sinh^2(t)}.
\]
In particular, it holds that
\begin{equation}
\label{eq:tausamez}
	\tau((x_0, y_0, z), (x_1, y_1, z)) = \sqrt{(x_0 - x_1)^2 - (y_0 - y_1)^2},
\end{equation}
and that
\begin{equation}
\label{eq:tauleqptau}
	\tau((x_0, y_0, z_0), (x_1, y_1, z_1)) \leq \sqrt{(x_0 - x_1)^2 - (y_0 - y_1)^2},
\end{equation}
since $0 \leq x/\sinh(x) \leq 1$ for all $x \in \R$.

\subsection{The Lagrange multiplier rule}
\label{subsection:LagrangeMultiplier}

As in sub-Riemannian geometry, Pontryagin extremals can be characterised via Lagrange's multipliers rule, which will be useful later on. We start by defining the usual end-point map. 

\begin{definition}
The end-point map relative to the pair of times $(t_0, t_1) \in \interval{0}{T}^2$, and to the point $q \in \H$ is the map
\[
E^{t_0, t_1}_q : \L^\infty(\interval{0}{T}, \R^2) \to \H : u \mapsto \gamma_u(t_1)
\]
where $\gamma_u : \interval{0}{T} \to \H$ is the unique solution to the differential equation $\dot \gamma = u_1 X + u_2 Y$ with initial condition $\gamma(t_0) = q$.
\end{definition}

The end-point map is smooth and its differential is well-known.

\begin{proposition}[{\cite[Proposition 8.5]{comprehensive2020}}]
	\label{prop:diffEndpoint}
     The end-point map $E_{q}^{t_0, t_1}$ is smooth, for every $u \in \L^\infty(\interval{0}{T}, \R^2)$, its differential $D_u E_q^{t_0, t_1} : \L^\infty(\interval{0}{T}, \R^2) \to \T_{\gamma_u(t_1)}(\H)$ satisfies 
     \begin{equation}
     	\label{eq:diffEndpoint}
     	\Diff_u E_q^{t_0, t_1} (v) = \int_{t_0}^{t_1} (P_u^{t, t_1})_*\left[ v_1(t) X + v_2(t) Y \right] (\gamma_u(t_1)) \diff t.
     \end{equation}   
\end{proposition}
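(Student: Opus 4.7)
The plan is to split the argument into two steps: first establish smoothness of $E_q^{t_0,t_1}$ via standard ODE theory, and then compute its differential by a first-order perturbation of the control.

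For the smoothness, I would observe that since $X$ and $Y$ are smooth and complete (being left-invariant on the Lie group $\H$), the Carathéodory ODE $\dot\gamma = u_1 X(\gamma) + u_2 Y(\gamma)$ with $\gamma(t_0) = q$ admits a unique absolutely continuous solution defined on all of $\interval{0}{T}$ for every $u \in \L^\infty(\interval{0}{T}, \R^2)$. The classical theorem on the smooth dependence of ODE solutions on parameters — applied with the Banach space $\L^\infty(\interval{0}{T}, \R^2)$ playing the role of the parameter space — then yields smoothness of $u \mapsto \gamma_u$, and composing with evaluation at $t_1$ gives smoothness of $E_q^{t_0,t_1}$.

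For the differential, I would fix $v \in \L^\infty(\interval{0}{T}, \R^2)$, set $\gamma_\epsilon := \gamma_{u + \epsilon v}$, and differentiate the controlled equation in $\epsilon$ at $\epsilon = 0$. The variation $\eta(t) := \partial_\epsilon \gamma_\epsilon(t)|_{\epsilon = 0}$ solves the linearised inhomogeneous ODE
\[
\dot\eta(t) = \left(u_1(t)\, \Diff X + u_2(t)\, \Diff Y\right)_{\gamma_u(t)}\! \eta(t) + v_1(t) X(\gamma_u(t)) + v_2(t) Y(\gamma_u(t)),
\]
with initial condition $\eta(t_0) = 0$. The homogeneous part is precisely the linearisation of the time-dependent flow generated by $W_t := u_1(t) X + u_2(t) Y$, whose fundamental resolvent propagating from time $t$ to time $t_1$ coincides with the tangent map $(P_u^{t, t_1})_*$. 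Applying Duhamel's (variation-of-constants) formula then gives
\[
\eta(t_1) = \int_{t_0}^{t_1} (P_u^{t, t_1})_*\!\left[ v_1(t) X + v_2(t) Y \right](\gamma_u(t_1)) \diff t,
\]
which is exactly the formula in \cref{eq:diffEndpoint}.

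The only mildly technical point is the identification of the fundamental solution of the linearised equation with the tangent map of the nonlinear flow; this follows from differentiating the cocycle identity $P_u^{s, t_1} \circ P_u^{t, s} = P_u^{t, t_1}$ in the base-point variable, which is routine. I do not anticipate a genuine obstacle, as the whole argument is classical in geometric control theory and the statement is itself a direct quotation of \cite[Proposition 8.5]{comprehensive2020}; one could in principle simply cite that reference.
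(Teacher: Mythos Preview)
Your proposal is correct and follows the standard geometric control argument. The paper does not give its own proof of this proposition at all: it is stated with a direct citation to \cite[Proposition 8.5]{comprehensive2020} and left unproved, so your sketch simply supplies what the authors chose to omit by reference.
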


The sub-Lorentzian optimisation problem can reformulated as the problem of finding $u \in \L^{\infty}(\interval{0}{T}, \overline{U})$ that maximises the length $\L$ among all the controls $u \in (E^T_{q_0})^{-1}(q)$. The Lagrange multiplier rule, see \cite[Theorem 8.7]{comprehensive2020}, asserts that if $u \in \L^{\infty}(\interval{0}{T}, U)$ is such a maximiser, then there exists $\lambda_1 \in \T^*_q(\H)$ and $\nu \in \R$ such that $\lambda_1 \Diff_uE_{q_0}^T + \nu \Diff_u L = 0$. A small computation shows that when $u \in \L^{\infty}(\interval{0}{T}, U)$ describes a curve parametrised by arclength, then
\begin{align*}
	\Diff_u \L (v) = {} & \frac{\diff}{\diff s}  \L(u + s v)  \Big|_{s = 0} = \frac{\diff}{\diff s}  \int_{0}^T \sqrt{(u_1(t) + s v_1(t))^2 - (u_2(t) + s v_3(t))^2} \diff t  \Big|_{s = 0} \\
	={}& \int_{0}^T \frac{u_1(v) v_1(t) - u_2(t) v_2(t)}{\sqrt{u_1(t)^2 - u_2(t)^2}} \diff t = \int_0^T \left(u_1(t)v_1(t) - u_2(t) v_2(t) \right) \diff t.
\end{align*}
In light of this, we set
\[
(u, v)_{\mathrm{sL}} := \int_0^T (u_2(t)v_2(t) - u_1(t)v_2(t)) \diff t
\]
for all $u, v \in \mathrm{L}^{+\infty}(\interval{0}{T}, \R^2)$.

The Lagrange Multiplier rule is known to be related to Pontryagin's maximum principle. The following result is the analogue of \cite[Proposition 8.9]{comprehensive2020} for sub-Lorentzian strictly normal extremals.

\begin{proposition}
\label{prop:LagrangeMultiplier}
Let $\lambda : \interval{0}{T} \to \T^*(\H)$ be a Lipschitz curve and $\gamma : \interval{0}{T} \to \H$ be a horizontal curve controlled by $u \in \mathrm{L}^{\infty}(\interval{0}{T}, U)$, parametrised by arclength, and that joins $q_0$ to $q$ in $\H$. The following are equivalent.
	\begin{theoenum}
    \item The curve $\lambda$ is a strictly normal extremal associated with $u$.
    \item There exists $\lambda_T \in \T^*_{q}(\H)$ such that $\lambda(t) = (P_u^{t, T})^* (q, \lambda_T)$ for all $t \in \interval{0}{T}$ and
   	\begin{equation}
   		\label{eq:LagrangeMultiplier1}
   		\lambda_T \Diff_u E_{q_0}^{0, T} = (u, \cdot)_{\mathrm{sL}}.
   	\end{equation}
    \item There exists $\lambda_0 \in \T^*_{q_0}(\H)$ such that $\lambda(t) = (P_u^{t, 0})^* (q_0, \lambda_0)$ for all $t \in \interval{0}{T}$ and
    \begin{equation}
   		\label{eq:LagrangeMultiplier2}
   		\lambda_0 \Diff_u E_{q}^{T, 0} = -(u, \cdot)_{\mathrm{sL}}.
   	\end{equation}
    \end{theoenum}
\end{proposition}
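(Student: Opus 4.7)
The plan is to follow the sub-Riemannian template of \cite[Proposition 8.9]{comprehensive2020}, adapted to the sub-Lorentzian cost. The backbone of the argument is an identity linking $\lambda_T \Diff_u E_{q_0}^{0,T}$ to the values of $h_X, h_Y$ along a Hamiltonian lift of $\gamma$. Concretely, if one defines $\lambda(t) := (P_u^{t,T})^*(q, \lambda_T)$, then substituting \cref{eq:diffEndpoint} and using the defining property of the pullback of a covector gives
\[
\lambda_T \Diff_u E_{q_0}^{0,T}(v) = \int_0^T \langle (P_u^{t,T})^*\lambda_T, v_1(t) X(\gamma(t)) + v_2(t) Y(\gamma(t))\rangle \diff t = \int_0^T \left(v_1(t) h_X(t) + v_2(t) h_Y(t)\right) \diff t.
\]
This identity will be used in both directions to translate between the Lagrange conditions and the Pontryagin conditions.

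For (i) $\Rightarrow$ (ii), I would take $\lambda_T := \lambda(T)$. Since $\lambda$ solves Hamilton's equations for the controlled Hamiltonian $u_1 h_X + u_2 h_Y$ on $\interval{0}{T}$ (the $-\sqrt{u_1^2-u_2^2}$ piece of $\mathscr{H}^\nu$ does not depend on $\lambda$), the standard theory of cotangent lifts of flows yields $\lambda(t) = (P_u^{t,T})^*\lambda_T$ for all $t$. Condition (iv) of \cref{prop:maxH}, combined with the arclength constraint $u_1^2 - u_2^2 = 1$, then forces $h_X(t) = -u_1(t)$ and $h_Y(t) = u_2(t)$ almost everywhere, so the right-hand side of the above identity matches $(u, \cdot)_{\mathrm{sL}}$ exactly.

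For (ii) $\Rightarrow$ (i), combining \cref{eq:LagrangeMultiplier1} with the identity shows that the pairing of $v_1(h_X + u_1) + v_2(h_Y - u_2)$ against arbitrary $v \in \mathrm{L}^\infty(\interval{0}{T}, \R^2)$ vanishes. A density argument of the calculus of variations forces $h_X = -u_1$ and $h_Y = u_2$ a.e., and together with arclength this gives $h_X^2 - h_Y^2 = 1$ and $h_X \leq -1$, i.e. case (iv) of \cref{prop:maxH}. Since $\lambda$ is by construction the cotangent lift of the flow of $u_1 X + u_2 Y$, it automatically satisfies the Hamilton equations \cref{eq:HamiltonequationsH}, and the value $H(\lambda) = 1/2$ confirms it is strictly normal. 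The equivalence (i) $\Leftrightarrow$ (iii) follows by running the same argument for the backward flow: setting $\lambda_0 := \lambda(0) = (P_u^{0,T})^*\lambda_T$ and pairing with $\Diff_u E_q^{T,0}$ swaps the limits of integration, which accounts for the minus sign on the right-hand side of \cref{eq:LagrangeMultiplier2}.

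The main subtlety is the bookkeeping surrounding the sign and propagation conventions: one must verify that the cotangent lift of the base flow really does coincide with the Hamiltonian flow of $u_1 h_X + u_2 h_Y$ in the correct time direction, and that the maximality clause of Pontryagin's principle reduces cleanly to the first-order identities $h_X = -u_1$, $h_Y = u_2$ precisely on the open set $\{h_X < -\abs{h_Y}\}$ where strictly normal extremals live (so that one really recovers case (iv), and not (i)--(iii), of \cref{prop:maxH}). Neither point is hard in isolation, but both must align for the argument to close.
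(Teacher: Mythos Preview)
Your proposal is correct and follows essentially the same route as the paper: both hinge on the identity $\lambda_T \Diff_u E_{q_0}^{0,T}(v) = \int_0^T (v_1 h_X + v_2 h_Y)\,\diff t$ obtained from \cref{eq:diffEndpoint}, use the arclength constraint together with case~(iv) of \cref{prop:maxH} to identify $(h_X,h_Y)=(-u_1,u_2)$, and treat (iii) by the same computation with reversed integration limits. The only cosmetic difference is that the paper spells out (ii)$\Rightarrow$(i) first and then says the converse is obtained by reversing the steps, whereas you do both directions explicitly.
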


\begin{proof}
	We start by showing that (ii) implies (i). Since $\lambda_T \in \T^*_{q}(\H)$ and $\lambda(t) = (P_u^{t, T})^* (q, \lambda_T)$ for all $t \in \interval{0}{T}$, we know by \cite[Proposition 4.12]{comprehensive2020} that the curve $\lambda$ satisfies Hamilton's equation \cref{eq:HamiltonequationsH} with initial value $\lambda(T) = (q, \lambda_T)$ and thus (i) of \cref{thm:PMP} holds. The identity \cref{eq:LagrangeMultiplier1} and \cref{prop:diffEndpoint} imply that for all $v \in \L^{\infty}(\interval{0}{T}, U)$, we have
	\begin{align*}
		\langle \lambda_T, D_u E^{0, T}_{q_0} (v) \rangle ={}& \int_0^T \langle \lambda_T, (P_u^{t, T})_*\left[ v_1(t) X + v_2(t) Y \right] (\gamma_u(T)) \rangle  \diff t\\
		={} & \int_0^T \langle \lambda(t), \left[ v_1(t) X(\gamma_u(t)) + v_2(t) Y(\gamma_u(t)) \right] \rangle  \diff t \\
		={} & \int_0^T v_1(t) h_X(t) + v_2(t) h_Y(t) \diff t = \int_0^T \left(u_2(t)v_2(t) - u_1(t) v_1(t) \right) \diff t.
	\end{align*}
	The last equality holds for all $v \in \L^{\infty}(\interval{0}{T}, \R^2)$ if and only if $u(t) = (- h_X(t), h_Y(t))$ for almost every $t \in \interval{0}{T}$. In particular, (ii) and (iii) of \cref{thm:PMP} are satisfied with $\nu = -1$, since (iv) of \cref{prop:maxH} is verified. By \cref{prop:normalextremals}, the extremal is strictly normal. The fact that (i) implies (ii) is obtained by performing the same computations in the reverse direction. The equivalence between (i) and (iii) is proven similarly, and the minus sign in \cref{eq:LagrangeMultiplier2} appears because, in \cref{eq:diffEndpoint}, the integral is taken from $T$ to $0$.
\end{proof}

In the next result, we fix $p \in \ointerval{0}{1}$, $q \in \H$ and $q' \in I^+(q)$, and we establish a relationship between the super-differentials of $x \mapsto \tau(x, q')^p/p$ at $q$ and the normal extremal whose projection is the maximiser (parametrised by arclength) joining $q$ to $q'$. This is the analogous to the geometric lemma found in \cite[Lemma 2.15]{rifford2014book}.

\begin{lemma}
    \label{lemma:Geometric}
    Let $q \in \H$, $q' \in I^+(q)$, and $\mathcal{O} \subseteq \H$ be an open neighbourhoof of $q$. Suppose that $\psi : \mathcal{O} \rightarrow \R$ is a map, differentiable at $q$, such that
    \[
    \psi(q) = \frac{1}{p} \tau(q,q')^p \text{ and } \psi(x) \geq \frac{1}{p} \tau(x ,q')^p \text{ for all } x \in \mathcal{O}.
    \]
    The unique maximising geodesic joining $q$ to $q'$ parametrised by arclength on $\interval{0}{T}$ is the projection of the strictly normal extremal $\lambda: [0,T] \to \mathrm{T}^*(\H)$ satisfying $\lambda(0)=(q,\Diff_{q} \psi/T^{p - 1})$. In particular, we have the identity
    \begin{equation}
    \label{eq:q'=T(q)}
    	q' = \exp_q\left(\Diff_{q} \psi \bigg/ \left(\sqrt{2 H(q, \Diff_q \psi)}\right)^{\tfrac{p - 2}{p-1}}\right).
    \end{equation} 
\end{lemma}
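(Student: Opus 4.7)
The strategy is to use the hypothesis on $\psi$ to obtain an auxiliary variational problem for which the control $u$ of the maximising geodesic is a local minimum, and then to read off the Lagrange multiplier via \cref{prop:LagrangeMultiplier}. Concretely, let $\gamma_u : \interval{0}{T} \to \H$ be the unique maximising geodesic from $q$ to $q'$ parametrised by arclength, with control $u \in \mathrm{L}^\infty(\interval{0}{T}, U)$, so that $T = \tau(q,q')$, $u_1^2 - u_2^2 \equiv 1$, and by \cref{thm:geodesicspace,prop:normalextremals} it is the projection of a strictly normal extremal, with $u(t) \in \mathrm{int}(U)$ for all $t$. Consider then, for $v$ in a small $\mathrm{L}^\infty$-neighbourhood $\mathcal{V}$ of $u$, the functional
\[
\Phi(v) \;:=\; \psi\bigl(E_{q'}^{T,0}(v)\bigr) \;-\; \frac{1}{p}\, \mathrm{L}(\gamma_v)^p,
\]
where $\gamma_v$ is the horizontal curve with $\gamma_v(T) = q'$ and control $v$, so that $\gamma_v(0) = E_{q'}^{T,0}(v)$.

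Shrinking $\mathcal{V}$ if necessary, one has $v_1^2 - v_2^2 > 0$ uniformly, $\gamma_v$ is future-directed timelike, $E_{q'}^{T,0}(v) \in \mathcal{O}$, and $q' \in I^+(E_{q'}^{T,0}(v))$, so that $\mathrm{L}(\gamma_v) \leq \tau(E_{q'}^{T,0}(v), q')$; since $x \mapsto x^p/p$ is increasing on $\ointerval{0}{\infty}$ for $p \in \ointerval{0}{1}$, the assumption on $\psi$ yields $\Phi(v) \geq 0$, and $\Phi(u) = 0$ by the equalities $\mathrm{L}(\gamma_u) = T$ and $\psi(q) = \tau(q,q')^p/p$. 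Hence $u$ is a local minimum of $\Phi$, so $\Diff_u \Phi = 0$. Combining \cref{prop:diffEndpoint} with the identity $\Diff_u \mathrm{L}(w) = -(u, w)_{\mathrm{sL}}$ (valid because $u$ is arclength-parametrised), the first-order condition becomes, for every $w \in \mathrm{L}^\infty(\interval{0}{T}, \R^2)$,
\[
\Diff_q \psi \cdot \Diff_u E_{q'}^{T,0}(w) \;=\; T^{p-1} \Diff_u \mathrm{L}(w) \;=\; -\,T^{p-1}\,(u, w)_{\mathrm{sL}}.
\]
Setting $\lambda_0 := \Diff_q \psi / T^{p-1} \in \T^*_q(\H)$, this is precisely condition (iii) of \cref{prop:LagrangeMultiplier}, so $\lambda(t) := (P_u^{t,0})^*(q, \lambda_0)$ is a strictly normal extremal associated with $u$, with $\lambda(0) = (q, \Diff_q \psi / T^{p-1})$ as claimed.

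To recover \cref{eq:q'=T(q)}, I would invoke the fact from \cref{prop:normalextremals} that along a strictly normal extremal whose projection is arclength-parametrised, $H(\lambda(t)) \equiv 1/2$. By the homogeneity $H(q, \alpha \mu) = \alpha^2 H(q, \mu)$, this forces $T^{p-1} = \sqrt{2 H(q, \Diff_q \psi)}$, and then
\[
q' \;=\; \gamma_u(T) \;=\; \exp_q(T \lambda_0) \;=\; \exp_q\bigl(T^{2-p}\,\Diff_q \psi\bigr) \;=\; \exp_q\!\left(\Diff_q \psi \bigg/ \bigl(\sqrt{2 H(q, \Diff_q \psi)}\bigr)^{(p-2)/(p-1)}\right),
\]
using $T^{2-p} = (T^{p-1})^{(2-p)/(p-1)}$. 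The main delicate point is the verification that $\Phi \geq 0$ on a genuine $\mathrm{L}^\infty$-neighbourhood of $u$: one must simultaneously guarantee that the perturbed curves remain timelike and future-directed (which follows from $u \in \mathrm{int}(U)$ and continuity), that their starting points remain in $\mathcal{O}$ (by continuity of $E_{q'}^{T,0}$), and that $q'$ stays in the chronological future of these starting points so that the bound $\mathrm{L}(\gamma_v) \leq \tau(E_{q'}^{T,0}(v), q')$ may be invoked. Everything else is a clean computation with the end-point map and a direct application of \cref{prop:LagrangeMultiplier}.
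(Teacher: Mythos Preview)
Your proof is correct and follows essentially the same route as the paper's: both introduce the functional $\Phi(v) = \psi(E_{q'}^{T,0}(v)) - \tfrac{1}{p}\mathrm{L}(\gamma_v)^p$, observe that the optimal control is a critical point, read off the first-order condition $\Diff_q\psi \cdot \Diff_u E_{q'}^{T,0} = -T^{p-1}(u,\cdot)_{\mathrm{sL}}$, and invoke \cref{prop:LagrangeMultiplier} followed by the normalisation $H(\lambda(0)) = 1/2$. If anything, your treatment is slightly more careful than the paper's about why one can restrict to a neighbourhood $\mathcal{V}$ on which the perturbed curves remain timelike, future-directed, and end in $\mathcal{O}$.
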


\begin{proof}
	Let $\gamma : \interval{0}{T} \to \H$ be the unique maximising geodesic joining $q$ to $q'$, which is strictly normal since $q' \in I^+(q)$. We will denote by $u_\gamma \in \L^{\infty}(\interval{0}{T}, U)$ the corresponding (unique) control parametrised by arclength. By construction, this control maximises the length functional $\L$ among all controls $u \in \L^{\infty}(\interval{0}{T}, U)$ satisfying $E^{T, 0}_{q'}(u) = q$. By definition of the time-separation function and then by hypothesis, it holds that
	\[
	\frac{1}{p}\L(u)^p \leq \frac{1}{p} \tau(E^{T, 0}_{q'}(u), q)^p \leq \phi(E^{T, 0}_{q'}(u))
	\]
 for every other control $u \in \L^\infty(\interval{0}{T}, U)$ with $E^{T, 0}_{q'}(u) \in \mathcal{O}$.
	Moreover, these inequalities become equalities at $u = u_\gamma$:
	\[
	\frac{1}{p}\L(u_\gamma)^p = \frac{1}{p} \tau(q, q')^p = \psi(q) = \psi(E^{T, 0}_{q'}(u_\gamma)).
	\]
	Therefore, $u_\gamma$ maximises the functional $u \mapsto \psi(E^{T, 0}_{q'}(u)) - \L(u)^p/p$ over the set of controls $u \in \L^\infty(\interval{0}{T}, U)$ satisfying $E^{T, 0}_{q'}(u) \in \mathcal{O}$, and thus $u_\gamma$ is a critical point of that functional. In particular, it holds that
	\[
	\Diff_{q} \psi \circ \Diff_{u_{\gamma}} E^{T, 0}_{q'} = - T^{p - 1} (u_\gamma, \cdot)_{\mathrm{sL}}, \text{ and consequently } \left(\frac{\Diff_{q} \psi}{T^{p - 1}} \right) \circ \Diff_{u_{\gamma}} E^{T, 0}_{q'} = -(u_\gamma, \cdot)_{\mathrm{sL}}.
	\]
	
	We set $\lambda_0 := \Diff_{q} \psi/T^{p - 1} \in \T^*_{q}(\H)$ and $\lambda(t) = (P_u^{0, t})^* (q, \lambda_0)$ for all $t \in \interval{0}{T}$, i.e. $\lambda : \interval{0}{T} \to \T^*(\H)$ is the curve satisfying (i) of \cref{thm:PMP} with $\lambda(0) = (q, \lambda_0)$. By \cref{prop:LagrangeMultiplier}, we deduce that $\lambda : \interval{0}{T} \to \T^*(\H)$ is the strictly normal extremal associated with $u$ whose projection is $\gamma$. In terms of the sub-Lorentzian exponential map, we need to follow the trajectory forward from $q$ for a time $T$ at constant speed 1 in order to reach $q'$, i.e.
    \[
    q' = \exp_q^{T}(\Diff_{q} \psi/T^{p - 1}) = \exp_q(\Diff_{q} \psi/T^{p - 2}).
    \]
    It remains to write $T$ in terms of $q$ and $\Diff_q \psi$. We showed in \cref{prop:normalextremals} that $H(\lambda(t)) = 1/2$ for all $t \in \interval{0}{T}$. So at $t = 0$, we have that $H(q, \lambda_0) = 1/2$ and thus
\[
\frac{1}{2} = \frac{1}{2}(h_Y^2(q, \lambda_0) - h_X^2(q, \lambda_0)) = \frac{1}{2 T^{2(p-1)}}(h_Y^2(q, \Diff_{q} \psi) - h_X^2(q, \Diff_{q} \psi)).
\]
Therefore, this implies that
\[
T^{2(p - 1)} = 2 H(q, \Diff_q \psi)
\]
and the proof is completed with an easy algebraic simplification.
\end{proof}

Naturally, a symmetric statement for the map $x \mapsto \tau(q, x)^p/p$ can be stated similarly by using \cref{prop:LagrangeMultiplier} (ii). We include it here without proof for the sake of completeness.

\begin{lemma}
    \label{lemma:Geometric2}
    Let $q \in \H$, $q' \in I^+(q)$, and $\mathcal{O} \subseteq \H$ be an open neighbourhoof of $q'$. Suppose that $\psi : \mathcal{O} \rightarrow \R$ is a map, differentiable at $q'$, such that
    \[
    \psi(q) = \frac{1}{p} \tau(q,q')^p \text{ and } \psi(x) \geq \frac{1}{p} \tau(q ,x)^p \text{ for all } x \in \mathcal{O}.
    \]
    The unique maximising geodesic joining $q$ to $q'$ parametrised by arclength on $\interval{0}{T}$ is the projection of the strictly normal extremal $\lambda: [0,T] \to \mathrm{T}^*(\H)$ satisfying $\lambda(T)=(q,-\Diff_{q} \psi/T^{p - 1})$. In particular, we have the identity
    \begin{equation*}
    	q = \exp_{q'}\left({\Diff_{q'} \psi} \bigg/ {\left(\sqrt{2H(q', \Diff_{q'} \psi)}\right)^{\tfrac{p - 2}{p-1}}}\right).
    \end{equation*} 
\end{lemma}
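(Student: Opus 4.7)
The plan is to mirror the proof of \cref{lemma:Geometric}, swapping the roles of the fixed and variable endpoints, and swapping (iii) of \cref{prop:LagrangeMultiplier} for (ii). First, I would let $\gamma : \interval{0}{T} \to \H$ be the unique maximising geodesic from $q$ to $q'$, parametrised by arclength, with corresponding control $u_\gamma \in \L^\infty(\interval{0}{T}, U)$. Because $q' \in I^+(q)$, \cref{thm:geodesicspace} guarantees that $\gamma$ is strictly normal, and by construction $u_\gamma$ maximises the length $\L$ over all controls $u$ with $E^{0, T}_q(u) = q'$.

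The inequality hypothesis on $\psi$ now concerns the second slot of $\tau$, so that for every control $u$ with $E^{0, T}_q(u) \in \mathcal{O}$,
\begin{equation*}
	\frac{1}{p}\L(u)^p \leq \frac{1}{p}\tau(q, E^{0, T}_q(u))^p \leq \psi(E^{0, T}_q(u)),
\end{equation*}
with equality throughout at $u = u_\gamma$ since $\psi(q') = \frac{1}{p}\tau(q,q')^p$. Hence $u_\gamma$ is a critical point of $u \mapsto \psi(E^{0, T}_q(u)) - \L(u)^p/p$, which, after differentiation, gives
\begin{equation*}
	\Diff_{q'}\psi \circ \Diff_{u_\gamma} E^{0, T}_q = T^{p-1}(u_\gamma, \cdot)_{\mathrm{sL}}.
\end{equation*}
I would then apply \cref{prop:LagrangeMultiplier}(ii) with $\lambda_T := -\Diff_{q'}\psi/T^{p-1}$ (adjusting the sign to match the convention of the proposition) to conclude that the Lipschitz cotangent lift $\lambda(t) = (P_{u_\gamma}^{t, T})^*(q', \lambda_T)$ is precisely the strictly normal extremal projecting onto $\gamma$, so that $\lambda(T) = (q', -\Diff_{q'}\psi/T^{p-1})$.

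Finally, to express $q$ via the exponential map based at $q'$, I would reuse the fact, established in the proof of \cref{prop:normalextremals}, that $H(\lambda(t)) = 1/2$ along any strictly normal extremal parametrised by arclength. Evaluating at $t = T$ and using the homogeneity $H(q', \alpha \lambda_T) = \alpha^2 H(q', \lambda_T)$ yields $T^{2(p-1)} = 2H(q', \Diff_{q'}\psi)$, exactly as in \cref{lemma:Geometric}. Combining this with the homogeneity identity $\exp_{q'}^T(\lambda_T) = \exp_{q'}(T\lambda_T)$ and the fact that the flow starting at $q'$ with initial covector proportional to $-\Diff_{q'}\psi$ reaches $q$ at time $T$ produces the stated formula. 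The main subtlety is keeping the signs consistent: the extremal goes from $q'$ back to $q$ (past-directed as seen from $q'$), so the sign in front of $\Diff_{q'}\psi$ appearing inside $\exp_{q'}$ must be reconciled with the sign convention in \cref{prop:LagrangeMultiplier}(ii) and with the chosen sign of the time orientation vector field $X$; this is the only step that is not entirely mechanical from the argument given for \cref{lemma:Geometric}.
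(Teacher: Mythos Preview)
Your approach is exactly the one the paper indicates: it states the lemma ``without proof'' and says it follows symmetrically from \cref{lemma:Geometric} by using \cref{prop:LagrangeMultiplier}~(ii) in place of (iii), which is precisely what you do with the forward end-point map $E^{0,T}_q$. One small correction: differentiating $u \mapsto \psi(E^{0,T}_q(u)) - \L(u)^p/p$ gives $\Diff_{q'}\psi \circ \Diff_{u_\gamma} E^{0,T}_q = -T^{p-1}(u_\gamma,\cdot)_{\mathrm{sL}}$ (same sign as in the proof of \cref{lemma:Geometric}, since $\Diff_u\L = -(u,\cdot)_{\mathrm{sL}}$), so that matching with \cref{prop:LagrangeMultiplier}~(ii) directly yields $\lambda_T = -\Diff_{q'}\psi/T^{p-1}$ without any further sign adjustment.
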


\subsection{The Lorentzian metric spacetime structure}
\label{subsection:HeisenbergMetricSpacetime}

In the last part of this section, we place the sub-Lorentzian Heisenberg group within the broader context of non-smooth Lorentzian geometry, following the general framework outlined in \cite{kuzingersamann2018, CavallettiMondino2022, CavallettiMondino2023}. {In order to have a suitable metric topology at hand, we will consider the sub-Riemannian distance $\diff$ on $\H$. The sub-Riemannian structure of the Heisenberg group is given by a Riemannian metric $\langle \cdot, \cdot \rangle_{\mathrm{sR}}$ on the subbundle $\mathcal{D} = \mathrm{span}\{X, Y\}$ of $\T(\H)$ such that the family $\{X, Y\}$ forms an orthonormal basis. The sub-Riemannian length of a horizontal curve $\gamma : \interval{0}{T} \mapsto \H$ controlled by $u = (u_1, u_2) \in \L^{\infty}(\interval{0}{T}, \R^2)$ is given by
    \[
    \L_{\mathrm{sR}}(\gamma) := \int_{0}^{T} \sqrt{u_1(t)^2 + u_2(t)^2} \diff t.
    \]
The sub-Riemannian distance is then given by 
\[
\diff(q_0, q) := \inf \left\{ \L_{\mathrm{sR}}(\gamma) \mid \gamma : \interval{0}{T} \to \H \text{ is horizontal, } \gamma(0) = q_0 \text{, and } \gamma(T) = q\right\}.
\]
It can be shown, by the Chow–Rashevskii theorem (see \cite[Theorem 3.31]{comprehensive2020}), that $\diff$ is indeed a distance function and induces a metric topology equivalent to the manifold topology. The metric space $(\H, \diff)$ is actually a length space, as noted in \cite[Section 3.7]{comprehensive2020}, and
\[
\L_{\mathrm{sR}}(\gamma) = \L_{\diff}(\gamma) := \sup \sum_{i = 0}^{N - 1}\diff(\gamma(t_i), \gamma(t_{i + 1})),
	\]
	where the infinimum is taken over all $N \in \mathds{N}$ and all finite partitions $(t_i)_{i = 0, \dots, N}$ of $\interval{0}{T}$ satisfying $t_0 = 0 < t_1 < \cdots < t_N = T$. We aim to keep the use of sub-Riemannian geometry to a minimum and refer interested readers to \cite{comprehensive2020} for further details.
}

\begin{definition}
	The two relations $\leq$ and $\ll$ on the sub-Lorentzian Heisenberg group $\H$ are defined as follows: for $q_0, q \in \H$, we set
    \begin{theoenum}
        \item $q_0 \leq q$ if and only if there exists a future-directed causal curve $\gamma: \interval{t_0}{T} \to \H$ with $\gamma(t_0)=q_0, \gamma(T)=q$.
        \item $q_0 \ll q$ if and only if there exists a future-directed timelike curve $\gamma: \interval{t_0}{T} \to \H$ with $\gamma(t_0)=q_0, \gamma(T)=q$.
    \end{theoenum}
    It is said that $q_0$ and $q$ are \textit{causally} (resp. \textit{timelike}) related if $q_0 \ll q$ (resp. $q_0 \leq q$).
\end{definition}

Given any subset $A \subseteq \H$, we also set
\[
A_{\leq}^2 := \{ (q_0, q) \in A^2 \mid q_0 \leq q \}, \ \ A_{\ll}^2 := \{ (q_0, q) \in A^2 \mid q_0 \ll q \}.
\]
It is clear that $\ll$ is transitive and contained in $\leq$, while $\leq$ is transitive and reflexive. This turns $(\H, \ll, \leq)$ into a \textit{causal space}, following \cite[Definition 1.1]{CavallettiMondino2022}. The \textit{causal future} and the \textit{chronological future} of $A$ as defined previously in \cref{section:subLgeometry} can be written as
\[
J^+(A) := \{ q \in \H \mid q_0 \leq q \text{ for some } q_0 \in A \}, \ \ I^+(A) := \{ q \in \H \mid q_0 \ll q \text{ for some } q_0 \in A \}.
\]
The same can be done for $J^-(A), I^-(A)$, and recall that $J^{\pm}(q_0) := J^{\pm}(\{q_0\})$ and that $I^{\pm}(q_0) := I^{\pm}(\{q_0\})$. It is clear that if $q_0 \leq q_1 \ll q_2$ or $q_0 \ll q_1 \leq q_2$, then $q_0 \ll q_2$, for all $q_0, q_1, q_2 \in \H$. 

The time-separation function introduced in \cref{timeseparation} enriches the structure of the causal space $(\H, \ll, \leq)$. It is not difficult to see from the work done in the previous sections that the map $\tau : \H \times \H \to \rinterval{0}{+\infty}$ is lower semicontinuous (actually continuous, see \cref{theorem:propertytau}), and that for every $q_0, q_1, q_2 \in \H$, it holds
\begin{theoenum}
        \item $\tau(q_0, q_1) = 0$ if $q_0 \nleq q$
        \item $\tau(q_0, q_1) > 0$ if and only if $q_0 \ll q_1$, and
        \item if $q_0 \leq q_1 \leq q_2$, then we have the \textit{reverse triangle inequality}
        \[
        \tau(q_0, q_2) \geq \tau(q_0, q_1) + \tau(q_1, q_2).
        \]
    \end{theoenum}
These properties of $\tau$ imply that $\ll$ is an open relation, i.e. the set $\{(q_0, q)\in \H^2 \mid \tau(q_0, q) > 0\}$ is open in $\H^2$, and that the sets $I^{+}(A)$ and $I^-(A)$ are open for every $A \subseteq \H$. By construction, $\tau(q, q) = 0$ and $\tau(q, q_0) = 0$ if $\tau(q_0 , q) > 0$, for all $q_0, q \in \H$. The structure $(\H, \diff, \ll, \leq, \tau)$ is then said to be a \textit{Lorentzian pre-length space}, in the sense of \cite[Definition 1.2]{CavallettiMondino2022}. 

We start by showing that the sub-Lorentzian Heisenberg group enjoys some important regularity properties.
\begin{proposition}
    \label{prop:HisGloballyHyperbolic}
	The sub-Lorentzian Heisenberg group is 
    \begin{theoenum}
        \item \emph{strongly causal}, i.e. the so-called Alexandrov topology obtained by taking $\{ I^+(q_0) \cap I^-(q) \mid q_0, q \in \H \}$ as a subbase coincides with the metric topology;
        \item \emph{causally closed}, i.e. $\H^2_\le$ is a closed set ($\leq$ is a closed relation in the product topology);
        \item \emph{compatible with the sub-Riemannian distance}, i.e. for every $q \in \H$, there exists a neighbourhood $\mathcal{U}$ of $q$ and a constant $C > 0$ such that $\L_{\diff}(\gamma) \leq C$ for all causal curves $\gamma$ with values in $\mathcal{U}$; 
        \item \emph{non-totally imprisoning}, i.e. for all compact subsets $K$ of $\H$, there is a uniform bound $C > 0$ for the sub-Riemannian lengths of causal curves $\gamma$ with values in $K$;
        \item \emph{globally hyperbolic}, i.e. it is non-totally imprisoning and also the sets $J^+(q_0) \cap J^-(q_1)$ are compact for all $q_0,q_1\in \H$;
        \item $\mathcal{K}$-\emph{globally hyperbolic}, i.e. it is non-totally imprisoning and also the sets $J^+(K_1) \cap J^-(K_2)$ are compact for all compact subsets $K_1, K_2 \subseteq \H$.
    \end{theoenum}
\end{proposition}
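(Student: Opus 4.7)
The strategy is to exploit the left-invariance of the sub-Lorentzian structure together with the explicit descriptions \cref{eq:J+} and \cref{eq:I+} of $J^\pm(e)$ and $I^\pm(e)$: for all $q_0, q \in \H$ we have $q_0 \leq q$ (resp.\ $q_0 \ll q$) if and only if $q_0^{-1} q \in J^+(e)$ (resp.\ $\in I^+(e)$), and the map $(q_0, q) \mapsto q_0^{-1} q$ is continuous on $\H^2$. This reduces nearly everything to elementary computations with the inequalities defining the unit causal and chronological cones.

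For (i), since $\tau$ is continuous and the relation $\ll$ is open (as recalled just before the statement), each set $I^+(q_0) \cap I^-(q_1)$ is open in the manifold topology, so the Alexandrov topology is coarser than the manifold topology. For the converse, by left-invariance it suffices to show that diamonds at $e$ form a neighbourhood base. Taking $q_0^\epsilon = (-\epsilon, 0, 0)$ and $q_1^\epsilon = (\epsilon, 0, 0)$, the group law together with \cref{eq:I+} yields that $(x, y, z) \in I^+(q_0^\epsilon) \cap I^-(q_1^\epsilon)$ iff $|x| < \epsilon$, $y^2 + 4|z + \epsilon y/2| < (\epsilon + x)^2$, and $y^2 + 4|z - \epsilon y/2| < (\epsilon - x)^2$. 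Summing the last two and using $|z + \epsilon y/2| + |z - \epsilon y/2| \geq 2|z|$ gives $y^2 + 4|z| < 2\epsilon^2$, so the diamond shrinks to $\{e\}$ as $\epsilon \to 0$. Item (ii) follows because $J^+(e)$ is closed by inspection of \cref{eq:J+}, and $\H_\le^2$ is its preimage under the continuous map $(q_0, q) \mapsto q_0^{-1} q$.

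For (iii) and (iv), a causal curve $\gamma = (x, y, z)$ controlled by $u \in U$ satisfies $\dot{x} = u_1 \geq |u_2|$, hence $\sqrt{u_1^2 + u_2^2} \leq \sqrt{2}\, u_1$ and thus $\L_\diff(\gamma) \leq \sqrt{2}\,(x(T) - x(0))$. This bounds the sub-Riemannian length of $\gamma$ by $\sqrt{2}$ times the $x$-diameter of any set containing its image, yielding (iii) with any bounded neighbourhood $\mathcal{U}$ of $q$ and (iv) with $C = \sqrt{2}\,\mathrm{diam}_x(K)$.

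Since (v) is the special case of (vi) with singleton compacts, I would focus on (vi). Let $K_1, K_2$ be compact and set $A := J^+(K_1) \cap J^-(K_2)$. For boundedness, any $q = (x, y, z) \in A$ satisfies $q_0 \leq q \leq q_1$ for some $(q_0, q_1) \in K_1 \times K_2$, and applying \cref{eq:J+} to $q_0^{-1}q$ and $q^{-1}q_1$ yields in turn a uniform bound on $x$, then on $y$ via $(y - y_0)^2 \leq (x - x_0)^2$, and then on $z$ via the inequality $4|z - z_0 + \tfrac{1}{2}(x y_0 - x_0 y)| \leq (x - x_0)^2 - (y - y_0)^2$. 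Closedness of $A$ follows from a standard argument: given a sequence $q_n \to q$ in $A$ with witnesses $q_{0,n} \in K_1$ and $q_{1,n} \in K_2$, compactness extracts subsequential limits $q_0 \in K_1$ and $q_1 \in K_2$, and causal closedness (ii) passes the relations $q_{0,n} \leq q_n \leq q_{1,n}$ to $q_0 \leq q \leq q_1$. Hence $A$ is closed and bounded, so compact. The main obstacle is the bookkeeping in (i) and (vi) imposed by the twist of the Heisenberg group law, specifically the cross terms inside the absolute-value expressions in \cref{eq:J+} and \cref{eq:I+}; once these are handled with appropriate triangle inequalities, no deeper ingredient is required beyond the explicit descriptions of the causal and chronological cones.
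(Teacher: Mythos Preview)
Your proof is correct and, in several places, more direct than the paper's. For (i) and (ii) the arguments are essentially the same, with your symmetric choice $q_0^\epsilon=(-\epsilon,0,0)$, $q_1^\epsilon=(\epsilon,0,0)$ being a minor cosmetic variation. The genuine differences occur in (iii), (iv), and (vi). The paper proves (iii) by extending the sub-Lorentzian metric to a Lorentzian metric $g$ on all of $\T(\H)$ and invoking a black-box result from the Lorentzian literature (\cite[Lemma 2.6.5]{piotr2011}) asserting $\diff_{\mathrm{R}}$-compatibility, then restricting back to horizontal curves; it then deduces (iv) from (i)--(iii) via an abstract implication valid in any Lorentzian pre-length space (\cite[Lemma 3.9]{felixgluing}). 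Your one-line inequality $\sqrt{u_1^2+u_2^2}\leq\sqrt{2}\,u_1=\sqrt{2}\,\dot x$ cuts straight through both: it yields (iii) and (iv) simultaneously with the explicit constant $C=\sqrt{2}\,\mathrm{diam}_x(\mathcal{U})$, and never leaves the horizontal setting. (Strictly, your bound as written assumes the curve is future-directed; for past-directed curves the same estimate holds with $-\dot x$ and gives the bound $\sqrt{2}\,|x(T)-x(0)|$, which you may wish to make explicit.) Similarly, for (vi) the paper appeals to another abstract implication (\cite[Proposition 1.6]{CavallettiMondino2022}) from global hyperbolicity plus causal closedness, whereas you verify compactness of $J^+(K_1)\cap J^-(K_2)$ by hand via a direct boundedness-and-closedness argument. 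Your approach is entirely self-contained and yields explicit bounds, while the paper's approach has the merit of showing that the sub-Lorentzian Heisenberg group slots into the general machinery of Lorentzian length spaces without bespoke computations.
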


\begin{proof}

    \textbf{Strongly causal.} We have already noted that $I^+(q_0)$, $I^-(q)$, and thus $I^+(q_0) \cap I^-(q)$, are open for all $q_0, q \in \H$. Actually, the metric topology is always finer than the Alexandrov topology in a Lorentzian pre-length space by \cite[Section 2.4]{kuzingersamann2018}. It remains to show that for all $q_0 \in \H$, for all $\delta > 0$, there exists $A$ in the Alexandrov subbase such that $q_0 \in A$ and $A \subseteq B_{\diff}(q_0, \delta)$. By left translation and since the metric topology coincides with the manifold topology, it is enough to consider $q_0 = (0, 0, 0)$ and argue that for all $\delta > 0$, there is $q = (x_1, y_1, z_1) \in \H$ such that $I^+(q_0) \cap I^{-}(q)$ is contained in an Euclidean ball of radius $\delta$. We note that $I^-(q) = L_q(I^-(q_0))$ and a small computation then shows that
    \[
    I^-(q) = \left\{ (x, y, z) \in \H \mid -(x - x_1)^2 + (y - y_1)^2 + 4 \left|z - z_1 + \tfrac{1}{2}(x y_1 - x_1 y)\right| < 0, x < x_1  \right\}.
    \]
    Therefore, if $(x, y, z) \in I^+(q_0) \cap I^{-}(q)$, we must have that $0 < x < x_1$,
    $-x_1 < y < x_1$, and $-x_1^2/4 < z < x_1^2/2.$ In other words, we have the inclusion 
    \begin{equation}
        \label{eq:capI}
        I^+(q_0) \cap I^{-}(q) \subseteq \ointerval{0}{x_1} \times \ointerval{-x_1}{x_1} \times \ointerval{-x_1^2/4}{x_1^2/4}.
    \end{equation}
    
    \textbf{Causally closed.} Consider $(p_n)_{n \in \N}, (q_n)_{n \in \N} \subseteq \H$ with $p_n \leq q_n$ for all $n \in \N$ and such that $p_n \to p$ and $q_n \to q$ as $n \to \infty$. Then, we have that $q_0 := (0, 0, 0) = p_n^{-1} \cdot p_n \leq p_n^{-1} \cdot q_n$ for all $n \in \N$ by left-invariance. Since $J^+(q_0)$ is closed by \cref{eq:J+}, and since the left-translation is continuous, we obtain $q_0 \leq p^{-1} \cdot q$ and thus $p \leq q$.

    \textbf{Compatible with the sub-Riemannian distance.} Consider the Riemannian $\langle \cdot, \cdot \rangle_{\mathrm{R}}$ metric defined on the full tangent bundle $\T(\H) = \mathrm{span}\{X, Y, Z\}$ as the unique inner product that turns $X, Y,$ and $Z$ into an orthonormal basis. This Riemannian structure naturally extends the sub-Riemannian structure introduced at the beginning of this subsection, and of course $\L_{\mathrm{sR}}(\gamma) = \L_{\mathrm{R}}(\gamma)$ if $\gamma : \interval{0}{T} \to \H$ is horizontal. Denote by $g$ the Lorentzian metric on $\H$ satisfying $g(X, X) = -1$, $g(Y, Y) = 1$ and $g(Z, Z) = 1$. The Lorentzian metric $g$ naturally extends the sub-Lorentzian metric $\langle \cdot, \cdot \rangle$ as well. In particular, the Lorentzian and sub-Lorentzian lengths $\L_g$ and $\L$ coincide on horizontal curves too. Now, it is a consequence of \cite[Lemma 2.6.5]{piotr2011} that the Lorentzian Heisenberg group induced from $g$ is compatible with the Riemannian distance $\diff_{\mathrm{R}}$ obtained from $\langle \cdot, \cdot \rangle_{\mathrm{R}}$. This means that for all $q_0 \in \H$, there is a $\diff_{\mathrm{R}}$-open set $\mathcal{U}$ containing $q_0$ and a constant $C > 0$ such that every $g$-causal curve with values in $\mathcal{U}$ has $\diff_{\mathrm{R}}$-length bounded above by $C$. In particular, we also have $\L_{\diff}(\gamma) = \L_{\diff_{\mathrm{R}}}(\gamma) \leq C$ for every causal curve $\gamma : \interval{0}{T} \to \mathcal{U}$, since being causal with respect to the sub-Lorentzian metric implies being causal with respect to the Lorentzian metric $g$. The set $\mathcal{U}$ is also $\diff$-open since the sub-Riemannian and Riemannian topologies both coincide with the manifold topology.

    \textbf{Non-totally imprisoning.} A strongly causal, locally causally closed, and d-compatible Lorentzian pre-length space is non-totally imprisoning by \cite[Lemma 3.9]{felixgluing}.

    \textbf{Globally hyperbolic.} By left-translation, it is again enough to consider $q_0 = (0, 0, 0)$ and $q = (x, y, z) \in \H$. Recall also that the metric and manifold topologies coincide. From \cref{eq:capI}, we know that $J^+(q_0) \cap J^-(q)$ is also bounded and since it is the intersection of two closed sets by \cref{eq:J+}, it is compact too.
    
    \textbf{$\mathcal{K}$-globally hyperbolic.} A globally hyperbolic and causally closed Lorentzian pre-length space satisfying $I^{\pm}(x) \neq \emptyset$ (which is true in our case by \cref{eq:I+}) is $\mathcal{K}$-globally hyperbolic by \cite[Proposition 1.6]{CavallettiMondino2022} (see also \cite[Remark 2.5]{braun2022good}).
\end{proof}

The following proposition clarifies the relationship between the sub-Lorentzian geometry built upon the optimal control problem \cref{nonspacelikeOCP} and the non-smooth setting of $(\H, \diff, \ll, \leq, \tau)$ as a Lorentzian pre-length space.

\begin{proposition}
\label{prop:lengthproperty}
	A curve $\gamma : \interval{0}{T} \to \H$ is a future-directed causal curve (resp. timelike curve) if and only if $\gamma$ is locally Lipschitz with respect to the metric topology induced by $\diff$ and $\gamma(s) \leq \gamma(t)$ (resp. $\gamma(s) \ll \gamma(t)$) for all $s, t \in \interval{0}{T}$ such that $s < t$. Furthermore, the length of a future-directed causal curve $\gamma : \interval{0}{T} \to \H$ as in \cref{eq:lorentzianlength} can be obtained as
    \begin{equation}
    \label{eq:L=Ltau}
        \mathrm{L}(\gamma) = \L_{\tau}(\gamma) := \inf \sum_{i = 0}^{N - 1}\tau(\gamma(t_i), \gamma(t_{i + 1})),
    \end{equation}
	where the infinimum is taken over all $N \in \mathds{N}$ and all finite partitions $(t_i)_{i = 0, \dots, N}$ of $\interval{0}{T}$ satisfying $t_0 = 0 < t_1 < \cdots < t_N = T$.
\end{proposition}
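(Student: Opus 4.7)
The plan is to establish the proposition in two halves: the equivalence between the control-theoretic notion of causal/timelike curves and the metric/order-theoretic one, and the length identity $\L(\gamma) = \L_\tau(\gamma)$.

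For the forward implication of the equivalence, suppose $\gamma$ is controlled by $u \in \L^\infty(\interval{0}{T}, U)$, with $u_1 > \abs{u_2}$ a.e. in the timelike case. Since $\sqrt{u_1^2 + u_2^2} \leq \sqrt{2}\, u_1 \in \L^\infty$, the sub-Riemannian length element is essentially bounded and $\gamma$ is Lipschitz with respect to $\diff$. For $s < t$, the restriction $\gamma|_{\interval{s}{t}}$ is itself future-directed causal (resp. timelike), so $\gamma(s) \leq \gamma(t)$ (resp. $\gamma(s) \ll \gamma(t)$) by the very definitions of the relations.

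The heart of the argument is the reverse implication. A locally Lipschitz curve with respect to $\diff$ is horizontal with controls $u_1, u_2 \in \L^\infty$, a classical fact from Carnot-Carath\'eodory geometry. To derive $u_1 \geq \abs{u_2}$ a.e., I would run a blow-up argument at each Lebesgue point $t_0$: set
\[
\gamma_\epsilon(\sigma) := \delta_{1/\epsilon}\bigl(\gamma(t_0)^{-1} \cdot \gamma(t_0 + \epsilon \sigma)\bigr),
\]
where $\delta_\lambda(x, y, z) := (\lambda x, \lambda y, \lambda^2 z)$ is the Heisenberg dilation. The map $\delta_\lambda$ is a Lie group automorphism that preserves $J^+(e)$ for $\lambda > 0$, as is evident from the inequalities defining \cref{eq:J+}, so left-invariance of $\leq$ yields $\gamma_\epsilon(\sigma) \in J^+(e)$ for $\sigma \geq 0$. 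Lebesgue differentiation (using in addition that the $z$-component of $\gamma(t_0)^{-1} \cdot \gamma(t_0 + \epsilon \sigma)$ is $O(\epsilon^2)$ by horizontality of $\gamma$) shows that $\gamma_\epsilon(\sigma) \to (u_1(t_0)\sigma, u_2(t_0)\sigma, 0)$ uniformly on compacts, and the closedness of $J^+(e)$ forces $u_1(t_0) \geq \abs{u_2(t_0)}$.

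The length identity splits into two inequalities. The lower bound $\L_\tau(\gamma) \geq \L(\gamma)$ follows from partitioning: on any partition, each restriction $\gamma|_{\interval{t_i}{t_{i+1}}}$ is a causal curve joining $\gamma(t_i)$ to $\gamma(t_{i+1})$, so $\tau(\gamma(t_i), \gamma(t_{i+1})) \geq \L(\gamma|_{\interval{t_i}{t_{i+1}}})$ by the very definition of $\tau$, and additivity of $\L$ gives $\sum_i \tau(\gamma(t_i), \gamma(t_{i+1})) \geq \L(\gamma)$. The reverse inequality exploits \cref{eq:tauleqptau}: on a fine partition anchored at Lebesgue points of $(u_1, u_2)$, the bound $\tau(\gamma(t_i), \gamma(t_{i+1})) \leq \sqrt{(x(t_{i+1})-x(t_i))^2 - (y(t_{i+1})-y(t_i))^2}$ approximates $\sqrt{u_1(t_i)^2 - u_2(t_i)^2}\,(t_{i+1}-t_i)$, and a Riemann-sum argument with dominated convergence recovers $\int_0^T \sqrt{u_1^2 - u_2^2}\,\diff t = \L(\gamma)$.

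The main obstacle is the timelike case of the reverse implication, where the blow-up produces $u_1 \geq \abs{u_2}$ but not the strict inequality needed. If the set $B := \{u_1 = \abs{u_2}\}$ had positive measure, choosing a density point $t_0$ would give $(x(t_0+h)-x(t_0))^2 - (y(t_0+h)-y(t_0))^2 = o(h^2)$ by Lebesgue differentiation, and \cref{eq:tauleqptau} would then force $\tau(\gamma(t_0), \gamma(t_0+h)) = o(h)$. The delicate step is to combine this vanishing with the reverse triangle inequality on finer subdivisions and with a careful analysis of the $z$-component of $\gamma(s)^{-1}\cdot\gamma(t)$ via the horizontality integral, in order to locate neighbouring $s < t$ that actually violate $\gamma(s) \ll \gamma(t)$ through the asymmetric constraint $4\abs{z} < x^2 - y^2$ in \cref{eq:I+}.
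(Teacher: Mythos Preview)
Your approach is correct for the causal characterisation and for the length identity, but it is genuinely different from the paper's. For the reverse implication, the paper does not blow up: it builds approximants $\gamma_n$ by concatenating maximising geodesics over dyadic partitions of $\interval{0}{T}$, uses global hyperbolicity and non-total imprisonment (\cref{prop:HisGloballyHyperbolic}) together with Arzela--Ascoli to extract a uniformly convergent subsequence, and then runs a convex-hull argument on the velocity cones $V_q$ to conclude $\dot\gamma(t_0) \in V_{\gamma(t_0)}$. Your dilation argument at Lebesgue points is more elementary and exploits the scale-invariance of $J^+(e)$ under $\delta_\lambda$ directly, bypassing the compactness machinery. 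For the upper bound $\L_\tau(\gamma) \leq \L(\gamma)$, the paper embeds into the ambient Lorentzian metric $g$ on $\T(\H)$ with $g(Z,Z)=1$, observes $\tau \leq \tau_g$ since horizontal causal curves are $g$-causal, and quotes the Lorentzian identity $\L_g = \L_{\tau_g}$ from \cite[Proposition~2.32]{kuzingersamann2018}; your route through \cref{eq:tauleqptau} and dominated convergence is self-contained and avoids the extension altogether.

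The timelike case of the reverse implication is a genuine gap in your proposal, as you acknowledge. The blow-up lands only in the closed cone, yielding $u_1 \geq |u_2|$, and your sketch for excluding equality on a set of positive measure via density points, $\tau(\gamma(t_0),\gamma(t_0+h)) = o(h)$, and the constraint $4|z| < x^2 - y^2$ in \cref{eq:I+} is not carried through to a contradiction. The paper treats both cases in parallel: since $\gamma(t_i^n) \ll \gamma(t_{i+1}^n)$, the unique maximiser between them is strictly timelike by \cref{thm:geodesicspace}, so the approximants $\gamma_n$ already have controls in the \emph{open} cone $U$, and the convex-hull argument is run with the open $V_q$. Whether the passage to the limit preserves the strict inequality is itself delicate, but the paper's route at least supplies a concrete mechanism---the strict timelikeness of the interpolating geodesics---that your blow-up cannot access.
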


\begin{proof}
    If $\gamma$ is future-directed and causal (resp. timelike), then given $s, t \in \interval{0}{T}$ with $s < t$, the curve $\gamma|_{\interval{s}{t}}$ is of course a future-directed and causal (resp. timelike) curve joining $\gamma(s)$ to $\gamma(t)$, and so $\gamma(s) \leq \gamma(t)$ (resp. $\gamma(s) \ll \gamma(t)$). 

    Suppose that $\gamma : \interval{0}{T} \to \H$ is locally Lipschitz, and such that $\gamma(s) \leq \gamma(t)$ (resp. $\gamma(s) \ll \gamma(t)$) for all $s, t \in \interval{0}{T}$ with $s < t$. In particular, the curve $\gamma$ is Lipschitz in charts and, by Rademacher's theorem, it is absolutely continuous and hence differentiable almost everywhere. Actually, being locally Lipschitz with respect to the sub-Riemannian distance implies that $\gamma$ is globally Lipschitz with respect to $\diff$,
    and \cite[Proposition 3.50]{comprehensive2020} yields that $\gamma$ is a horizontal curve, i.e. there exists $u \in \L^{\infty}(\interval{0}{T}, \R^2)$ such that $\dot{\gamma}(t) = u_1(t) X(\gamma(t)) + u_2(t) Y(\gamma(t))$ for almost every $t \in \interval{0}{T}$. It remains to show that $u(t) \in \overline{U}$ (resp. $u(t) \in U$) for almost every $t \in \interval{0}{T}$. 
    
We follow some of the ideas from the proofs of \cite[Theorem 3.41 and Proposition 3.50]{comprehensive2020}. Consider the partition $\sigma_n := (t_i^n)_{i = 1, \dots, n}$ of the interval $\interval{0}{T}$ into $2^n$ intervals of length $T/2^n$, namely $t_i^n := i T/2^n$ for $i = 1, \dots, 2^n$. Denote by $\gamma_n : \interval{0}{T} \to \H$ the curve defined by the concatenation of length-maximisers parametrised on $\interval{t_i^n}{t_{i + 1}^n}$ by constant speed joining $\gamma(t^n_i)$ to $\gamma(t_{i + 1}^n)$ for $i = 1, \dots, 2^n - 1$. The sequence of curves $(\gamma_n)_{n \in \N}$ converges to $\gamma$ pointwise by construction. Furthermore, the curves $\gamma_n : \interval{0}{T} \to \H$ lie in the set $J^+(q_0) \cap J^-(q)$, which is compact since we have shown in (v) of \cref{prop:HisGloballyHyperbolic} that the sub-Lorentzian Heisenberg group is globally hyperbolic. It is also non-totally imprisoning by (iv) of the same result, and we thus have that $\L_{\diff}(\gamma_n) \leq C$ for some $C > 0$ and all $n \in \N$, i.e. the curves $\gamma_n$ have uniformly bounded lengths. Since the sub-Riemannian Heisenberg group $(\H, \diff)$ is a complete locally compact length space, the Arzela-Ascoli
theorem from \cite[Theorem 2.5.14]{burago-burago-sergei2001} implies that the sequence of curves $\gamma_n$ contains a uniformly convergent subsequence, which must coincide with $\gamma$. With a slight abuse of notation, we continue to denote this subsequence by $(\gamma_n)_{n \in \N}$.
For almost every $t \in \interval{0}{T}$, we have
    \[
    \dot{\gamma}_n(t) = u_1^n(t) X(\gamma_n(t)) + u_2^n(t) Y(\gamma(t)),
    \]
    where for all $n \in \mathbb{N}$ and $i = 1, \dots, 2^n - 1$
    \[
    u_1^n(t) \geq |u_2^n(t)| \text{ (resp. $u_1^n(t) > |u_2^n(t)|$)}.
    \]
    Thus, we have that $\dot{\gamma}_n(t) \in V_{\gamma_n(t)}$ for almost every $t \in \interval{0}{T}$, where
    \begin{equation}
        \label{def:Vq}
        V_q := \left\{ f_u(q) := u_1 X(q) + u_2 Y(q) \ | \ u_1 \geq |u_2| \text{ (resp. $u_1 > |u_2|$)} \right\}.
    \end{equation}
    Note that $V_q$ is a convex set for every $q \in \H$. For $t_0 \in \interval{0}{T}$ and all $h > 0$ small enough, we may write in local coordinates that
    \begin{equation}
        \label{eq:tobepassedtolimit}
        \frac{1}{h}(\gamma_n(t_0 + h) - \gamma_n(t_0)) = \int_{t_0}^{t_0 + h} f_{u_n(t)}(\gamma_n(t)) \diff t \in \mathrm{Conv} \left( V_{\gamma_n(t)} \ | \ t \in \interval{t_0}{t_0 + h} \right).
    \end{equation}
    Continuing in local coordinates, by uniform convergence, we have that for $n \geq n(h)$ , the following holds:
    \[
    |\gamma_n(t) - \gamma(t)| \leq h, \ \text{ for all } t \in \interval{t_0}{t_0 + h}.
    \]
    Furthermore, since $\gamma$ is Lipschitz, there exists a constant $L > 0$ such that
    \[
    |\gamma(t) - \gamma(t_0)| \leq L h, \ \text{ for all } t \in \interval{t_0}{t_0 + h}.
    \]
    In particular, we have that
    \[
    |\gamma_n(t) - \gamma(t_0)| \leq |\gamma_n(t) - \gamma(t)| + |\gamma(t) - \gamma(t_0)| \leq (L + 1)h,
    \]
    and thus $\gamma_{n(h)}(t_0 + h) \to \gamma(t_0)$ as $h \to 0$. Since the metric topology and the manifold topology coincide, we deduce that for all $t \in \interval{t_0}{t_0 + h}$ and for all $n \geq n(h)$, it holds that $\gamma_n(t) \in B_{\diff}(\gamma(t_0), r(h))$, where $r(h) \to 0$ as $h \to 0$. We obtain that
\[
\mathrm{Conv} \left( V_{\gamma_n(t)} \ | \ t \in \interval{t_0}{t_0 + h} \right) \subseteq \mathrm{Conv} \left( V_{q} \ | \ q \in B_{\diff}(\gamma(t), r(h)) \right),
\]
and when $t_0$ is a differentiable point of $\gamma$, we may pass \cref{eq:tobepassedtolimit} to the limit as $h \to 0$ to find that
\[
\dot{\gamma}(t_0) \in \mathrm{Conv} \left( V_{\gamma(t_0)} \right) = V_{\gamma(t_0)}.
\]
Recalling \cref{def:Vq} shows that $\gamma$ is a future-directed causal curve.

    Lastly, we prove the identity \cref{eq:L=Ltau}. Let $\gamma : \interval{0}{T} \to \H$ be a future-directed causal curve and denote by $g$ the Lorentzian metric on $\H$ satisfying $g(X, X) = -1$, $g(Y, Y) = 1$ and $g(Z, Z) = 1$. As seen before, the Lorentzian metric $g$ naturally extends the sub-Lorentzian metric $\langle \cdot, \cdot \rangle$. In particular, the Lorentzian and sub-Lorentzian lengths $\L_g$ and $\L$ coincide on horizontal curves. Furthermore, the time-separation function $\tau_g$ induced from this Lorentzian metric is obtained by replacing in \cref{eq:tautimeseparation} $\L$ by $\L_g$ and $\Omega_{q_0 q}$ by the set $\Omega_{q_0 q}^g$ of future-directed absolutely continuous curves $\gamma : \interval{0}{T} \to \H$ satisfying $g(\dot{\gamma}(t), \dot{\gamma}(t)) \leq 0$ and $\dot{\gamma}(t) \neq 0$ for almost every $t \in \interval{0}{T}$ (that is to say, the future-directed $g$-causal curves). Since every horizontal causal curve is also $g$-causal, we must have $\tau \leq \tau_g$. On the one hand, we know by \cite[Proposition 2.32]{kuzingersamann2018} that
    \[
	\mathrm{L}(\gamma) = \mathrm{L}_g(\gamma) = \inf \sum_{i = 0}^{N - 1}\tau_g(\gamma(t_i), \gamma(t_{i + 1})) \geq \inf \sum_{i = 0}^{N - 1}\tau(\gamma(t_i), \gamma(t_{i + 1})).
	\]
    On the other hand, we also have that
    \[
    \inf \sum_{i = 0}^{N - 1}\tau(\gamma(t_i), \gamma(t_{i + 1})) \geq \inf \sum_{i = 0}^{N - 1}\L(\gamma|_{\interval{t_i}{t_{i + 1}}}) = \L(\gamma).
    \]
\end{proof}

In other words, \cref{prop:lengthproperty,prop:HisGloballyHyperbolic} are saying that the Lorentzian pre-length space $(\H, \diff, \ll, \leq, \tau)$ is actually a \textit{Lorentzian length space}, see \cite{kuzingersamann2018, CavallettiMondino2022, CavallettiMondino2023}. The notion of maximising geodesic can also be reinterpreted in this context, and the following statement is a direct consequence of \cref{prop:lengthproperty}.

\begin{proposition}
	A curve $\gamma : \interval{0}{T} \to \H$ is a timelike maximising geodesic  if and only if for all $s, t \in \interval{0}{T}$ with $s < t$, it holds
	\[
	\tau(\gamma(s), \gamma(t)) > 0 \text{, and } \tau(\gamma(0), \gamma(T)) = \mathrm{L}(\gamma).
	\] 
	Furthermore, a curve $\gamma : \interval{0}{T} \to \H$ is a timelike maximising geodesic parametrised by arclength if and only if for all $s, t \in \interval{0}{T}$ with $s < t$, it holds 
	\[
	\tau(\gamma(s), \gamma(t)) > 0 \text{, and } \tau(\gamma(s), \gamma(t)) = t - s.
	\]
\end{proposition}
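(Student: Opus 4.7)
The plan is to derive both equivalences from \cref{prop:lengthproperty} combined with the reverse triangle inequality satisfied by $\tau$.

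For the first equivalence, the forward direction essentially unwinds definitions: if $\gamma$ is a timelike maximising geodesic, it is in particular a timelike curve, so \cref{prop:lengthproperty} gives $\gamma(s) \ll \gamma(t)$ for every $s < t$, equivalently $\tau(\gamma(s), \gamma(t)) > 0$, while $\mathrm{L}(\gamma) = \tau(\gamma(0), \gamma(T))$ is exactly the maximising property. Conversely, since $\mathrm{L}(\gamma)$ is assumed well-defined, $\gamma$ is at least a future-directed causal curve; the strict positivity of $\tau$ then promotes this to $\gamma(s) \ll \gamma(t)$ for all $s < t$, so \cref{prop:lengthproperty} upgrades $\gamma$ to a timelike curve, and the second hypothesis is the maximising condition.

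For the second equivalence, the key observation is that every sub-curve of a timelike maximising geodesic is itself a timelike maximising geodesic: if $0 \leq s < t \leq T$, the chain
\[
\tau(\gamma(0),\gamma(T)) = \mathrm{L}(\gamma) = \mathrm{L}(\gamma|_{\interval{0}{s}}) + \mathrm{L}(\gamma|_{\interval{s}{t}}) + \mathrm{L}(\gamma|_{\interval{t}{T}}) \leq \tau(\gamma(0),\gamma(s)) + \tau(\gamma(s),\gamma(t)) + \tau(\gamma(t),\gamma(T)) \leq \tau(\gamma(0),\gamma(T)),
\]
in which the middle inequality uses the definition of $\tau$ as a supremum of lengths and the last is the reverse triangle inequality, forces equality throughout; in particular $\mathrm{L}(\gamma|_{\interval{s}{t}}) = \tau(\gamma(s),\gamma(t))$. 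If $\gamma$ is additionally parametrised by arclength then $\mathrm{L}(\gamma|_{\interval{s}{t}}) = t - s$ by definition, which yields the forward direction. For the converse, plugging $\tau(\gamma(t_i),\gamma(t_{i+1})) = t_{i+1} - t_i$ into \cref{eq:L=Ltau} collapses the infimum to give $\mathrm{L}(\gamma|_{\interval{s}{t}}) = t - s$, so in particular $\mathrm{L}(\gamma) = T = \tau(\gamma(0),\gamma(T))$, and the first equivalence then finishes the job, with the arclength normalisation coming for free.

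The argument is structural and I do not expect any genuine obstacle; the one point requiring mild care is the regularity of $\gamma$ in the backward direction of the first equivalence, where the well-definedness of $\mathrm{L}(\gamma)$ in the sub-Lorentzian sense must be read as implicitly providing that $\gamma$ is causal, after which \cref{prop:lengthproperty} handles the upgrade to timelike.
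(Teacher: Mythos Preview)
Your proposal is correct and follows exactly the route the paper intends: the paper states only that the proposition ``is a direct consequence of \cref{prop:lengthproperty}'' and gives no further argument, so your write-up simply fills in the details---using \cref{prop:lengthproperty} to pass between the timelike condition and $\tau>0$, the reverse triangle inequality to show sub-segments of maximisers are maximising, and \cref{eq:L=Ltau} to recover the arclength normalisation---that the paper leaves implicit.
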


\cref{thm:geodesicspace} says that for every $q_0, q \in \H$ such that $q_0 \leq q$ there exists a maximising timelike geodesic parametrised by arclength joining $q_0$ to $q$. In the language of non-smooth Lorentzian geometry, this means that the Lorentzian length space $(\H, \diff, \ll, \leq, \tau)$ is a \textit{Lorentzian geodesic space} (again see \cite{kuzingersamann2018, CavallettiMondino2022, CavallettiMondino2023}).

 \section{Optimal Transport in the sub-Lorentzian Heisenberg group}
 \label{section:OTinSLH}

\subsection{Lorentzian optimal transport in the Heisenberg group}

In this section, we introduce the Lorentzian optimal transport problem in the Heisenberg group, together with details on Kantorovich duality. The results presented here are formulated for the sub-Lorentzian Heisenberg group, although they also hold in the more general setting of Lorentzian metric spaces. Further details and proofs can be found in \cite{McCann2020, CavallettiMondino2022, CavallettiMondino2023}. We will assume that $p \in \linterval{0}{1}$, unless stated otherwise.

Given two Borel probability measures $\mu, \nu \in \mathcal{P}(\H)$, the (forward) Lorentz-Monge problem consists of maximising the functional
\begin{equation}
	\label{eq:LorentzMonge}
	M^+(T) := \int_{\H} c_p(x, T(x)) \diff \mu(x)
	\tag{M$^+$}
\end{equation}
among all the \emph{(forward) transport maps} from $\mu$ to $\nu$, i.e. the measurable map $T: \H \to \H$ such that $T_\sharp \mu = \nu$ and $T(q) \in J^+(q)$ for $\mu$-almost every $q \in \H$. A map that realises the maximum of \cref{eq:LorentzMonge} is called an \emph{(forward) optimal transport map} from $\mu$ to $\nu$. Similarly, the backward Lorentz-Monge problem consists of maximising the functional
\begin{equation}
	\label{eq:LorentzMonge-}
	M^-(T) := \int_{\H} c_p(T(x), x) \diff \nu(x)
	\tag{M$^-$}
\end{equation}
among all the \emph{(backward) transport maps} from $\nu$ to $\mu$, i.e. the measurable map $T: \H \to \H$ such that $T_\sharp \nu = \mu$ and $T(q) \in J^-(q)$ for $\nu$-almost every $q \in \H$.

This optimisation problem, whose Euclidean version was originally introduced by the French Mathematician Gaspard Monge in 1781, is not easy to solve directly with the usual methods of Calculus of Variations because the constraints are not closed with respect to the weak topology. Instead, it is easier to first analyse a relaxation, which was initially studied by Kantorovich in 1942. The idea is to define the set of \emph{transport plans} between $\mu$ and $\nu$ as
\[
    \Pi(\mu, \nu) = \left\{ \pi \in \mathcal{P}(\H \times \H) \mid (P_1)_\# \pi = \mu, (P_2)_\# \pi = \nu \right\},
    \]
 where $P_1,P_2:\H \times \H \to \H$ are the projections onto the respective coordinate, i.e. $P_1(q_0, q_1) := q_0$ and $P_2(q_0, q_1) := q_1$, for all $q_0, q_1 \in \H$.
 We also introduce the set of causal  (respectively timelike) transport plans from $\mu$ to $\nu$ as
    \[
    \text{ and } \ \Pi_\le(\mu, \nu) = \left\{ \pi \in \Pi(\mu,\nu) \mid \pi(\H^2_{\le}) = 1 \right\}, \ \ \Pi_\ll(\mu, \nu) = \left\{ \pi \in \Pi(\mu,\nu) \mid \pi(\H^2_{\ll}) = 1 \right\}.
    \]
 The Lorentz-Kantorovich problem then consists of maximising the functional
 \begin{equation}
 	\label{eq:LorentzKantorovich}
 	K(\pi) := \int_{\H \times \H} c_p(x, y) \diff \pi(x, y) \tag{K}
 \end{equation}
 among all $\pi \in \Pi_\le(\mu,\nu)$. A plan $\pi$ realising the maximum in \cref{eq:LorentzKantorovich} is called an \emph{optimal transport plan} from $\mu$ to $\nu$. Note that if $T$ is an optimal transport map for \cref{eq:LorentzMonge} (resp. for \cref{eq:LorentzMonge-}), then $\pi := (\mathrm{Id} \times T)_\sharp \mu$ (resp. $\pi := (T \times \mathrm{Id})_\sharp \nu$) is optimal for \cref{eq:LorentzKantorovich}.
 
 The advantage of this relaxation is that the constraint $\Pi_\le(\mu,\nu)$ is compact for the weak topology, and thus a proof for the existence of a maximiser follows from a standard argument. The next statement follows immediately from \cite[Proposition 1.5]{CavallettiMondino2022}, since we have shown in \cref{prop:HisGloballyHyperbolic} that $\H$ is a globally hyperbolic Lorentzian geodesic space.

\begin{proposition}
    \label{prop:optimalexistence}
    Let $\mu,\nu\in\mathcal{P}(\H)$ and suppose that 
    \begin{theoenum}
    	\item $\Pi_\le(\mu,\nu)\neq \varnothing$,
    	\item there exist measurable functions $a,b:\H \to \R$ with $a\oplus b \in \L^1(\mu\otimes\nu)$ and $\tau(q, q')^p \leq a(q) + b(q')$ for all $(q, q') \in \mathrm{supp}(\mu) \times \mathrm{supp}(\nu)$, where $a \oplus b$ denotes the function defined by $a \oplus b(q, q') = a(q) + b(q')$.
    \end{theoenum}
    Then the supremum in \cref{eq:LorentzKantorovich} is attained and finite.
\end{proposition}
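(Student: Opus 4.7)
The plan is to apply Kantorovich's direct method, namely to take a maximising sequence $(\pi_n)_{n \in \N}$ in $\Pi_\le(\mu,\nu)$, extract a weakly convergent subsequence $\pi_n \rightharpoonup \pi$, and verify that the limit plan $\pi$ still lies in $\Pi_\le(\mu,\nu)$ and satisfies $K(\pi) = \sup_{\Pi_\le(\mu,\nu)} K$. The only structural ingredients needed are weak compactness of $\Pi_\le(\mu,\nu)$, upper semicontinuity of the functional $K$ along such sequences, and a uniform upper bound ensuring the supremum is finite. The result is essentially a direct invocation of \cite[Proposition 1.5]{CavallettiMondino2022} once one observes that all the Lorentzian pre-length space hypotheses are met: by \cref{prop:HisGloballyHyperbolic}, $(\H, \diff, \ll, \le, \tau)$ is globally hyperbolic and causally closed, and by \cref{theorem:propertytau}, $\tau$ is continuous on $\H_{\leq}^2$.

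First, I would settle finiteness: for any $\pi \in \Pi_\le(\mu, \nu)$, hypothesis (ii) and the fact that $\pi$ has marginals $\mu$ and $\nu$ give
\[
K(\pi) = \int_{\H \times \H} c_p(x, y) \diff \pi(x, y) \leq \frac{1}{p} \int_{\H \times \H} \left(a(x) + b(y)\right) \diff \pi(x, y) = \frac{1}{p}\left( \int_{\H} a \diff \mu + \int_{\H} b \diff \nu \right) < \infty,
\]
which is independent of $\pi$, so $\sup_{\Pi_\le(\mu,\nu)} K < \infty$.

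Second, I would establish weak sequential compactness of $\Pi_\le(\mu, \nu)$. Since $\mu$ and $\nu$ are Borel probability measures on the Polish space $(\H, \diff)$, they are tight, hence so is $\Pi(\mu, \nu)$, and by Prokhorov's theorem this set is weakly sequentially relatively compact. To see that $\Pi_\le(\mu, \nu)$ is closed inside $\Pi(\mu, \nu)$, recall from \cref{prop:HisGloballyHyperbolic}(ii) that $\H^2_\le$ is closed, so if $\pi_n \rightharpoonup \pi$ with $\pi_n(\H^2_\le) = 1$, then the Portmanteau theorem for closed sets yields $\pi(\H^2_\le) \geq \limsup_{n \to \infty} \pi_n(\H^2_\le) = 1$. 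Marginals of $\pi$ are also $\mu$ and $\nu$ by continuity of the projections, so $\pi \in \Pi_\le(\mu, \nu)$.

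The main obstacle is the third step, the upper semicontinuity of $K$ along the sequence $(\pi_n)$. Here I would use that $c_p = \tau^p/p$ extends to an upper semicontinuous function on the whole of $\H^2$ when set to $0$ outside $\H^2_\le$: on the interior of $\H^2_\le$ it coincides with the continuous map from \cref{theorem:propertytau}, while at any boundary point $(q, q') \in \partial \H^2_\le$ all approaching values either converge to $c_p(q, q')$ (along sequences in $\H^2_\le$) or vanish (from the complement), giving $\limsup c_p \leq c_p(q, q')$. The difficulty is that $c_p$ is unbounded, but this is tamed by the integrable majorant $a \oplus b$: working with the nonnegative integrand $a \oplus b - c_p$ and using that $\int (a \oplus b) \diff \pi_n = \int a \diff \mu + \int b \diff \nu$ is constant along the sequence, an application of Fatou's lemma adapted to weak convergence of measures yields $\limsup_{n \to \infty} K(\pi_n) \leq K(\pi)$. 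This gives that the limit $\pi$ is a maximiser, concluding the proof.
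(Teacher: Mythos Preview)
Your proposal is correct and coincides with the paper's treatment: the paper gives no independent proof but simply invokes \cite[Proposition 1.5]{CavallettiMondino2022} after recording, via \cref{prop:HisGloballyHyperbolic}, that $\H$ is a globally hyperbolic Lorentzian geodesic space, which is exactly the reduction you perform before sketching the underlying direct-method argument. The only point in your sketch that warrants extra care is the Fatou step, since $(a \oplus b)/p - c_p$ need not be lower semicontinuous when $a$ and $b$ are merely measurable; this is resolved in the cited reference (and implicitly in your remark that $\int (a \oplus b)\,\diff\pi_n$ is constant) by combining that constancy with the continuity of $c_p$ and a truncation argument.
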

A general assumption  that is reasonable to assume on $\mu$ and $\nu$, other than just assuming $\Pi_{\leq}(\mu, \nu) \neq \varnothing$, is that
\begin{equation}
\label{remark:condition(i)}
	\mathrm{supp}(\mu) \times \mathrm{supp}(\nu) \subseteq \H_{\leq}^2.
	\tag{I}
\end{equation}
Indeed, if that is the case, then $\mu \otimes \nu \in \H_{\leq}^2$. Note that by the reverse triangle inequality of $\tau$, we have
	\[
	\tau(q, q')^p \leq (\tau(q, q'') - \tau(q', q''))^p \leq 2^p (\tau(q, q'')^p + \tau(q', q'')^p)
	\]
	and
	\[
	\tau(q, q')^p \leq (\tau(q'', q') - \tau(q'', q))^p \leq 2^p (\tau(q'', q')^p + \tau(q'', q)^p).
	\]
Thus if $\mu, \nu \in \mathcal{P}(\H)$ are such that there exists $q'' \in \H$ with
\begin{equation}
\label{remark:condition(ii)}
	\begin{split}
		\min\Big(\int_{\H} \tau(q, q'')^p \diff \mu(q) +& \int_{\H} \tau(q', q'')^p \diff \nu(q'), \\
		&\int_{\H} \tau(q'', q)^p \diff \mu(q) + \int_{\H} \tau(q'', q')^p \diff \nu(q')\Big) < +\infty,
	\end{split}
	\tag{II}
\end{equation}
	then (ii) of \cref{prop:optimalexistence} would be satisfied. Since $\tau$ is continuous, this is the case if both $\mu$ and $\nu$ have compact support.

The relaxation \cref{eq:LorentzKantorovich} also defines a Lorentzian metric on the space of probability measures $\mathcal{P}(\H)$.

\begin{definition}
    \label{plorentzwasserstein}
    Given $\mu,\nu \in \mathcal{P}(\H)$, the $p$-Lorentzian cost is given by $c_p(x, y) = \tau(x, y)^p/p$. The $p$-Lorentz-Wasserstein distance on $\H$, associated with the cost $c_p$, is then defined as
    \begin{equation}
    	\label{eq:LWDistTau}
    	\ell_p(\mu, \nu) = \sup_{\pi \in \Pi_\le(\mu,\nu)} \left( \int_{\H \times \H} c_p(x,y) \pi(\diff x \diff y) \right)^{1/p} \quad \text{ if } \Pi_\le(\mu,\nu) \neq \emptyset.
    \end{equation}
    By convention, we set $\ell_p(\mu,\nu) = -\infty$ if $\Pi_\le(\mu,\nu) = \emptyset$.
\end{definition}

Analogously to the well-known Kantorovich-Rubinstein-Wasserstein distances in classical optimal transport theory, the Lorentz-Wasserstein distance $\ell_p$ satisfies the reverse triangle inequality; see \cite[Proposition 2.5]{CavallettiMondino2023}. Specifically, for all $\mu_0, \mu_1, \mu_2 \in \mathcal{P}(\H)$, we have
\[
\ell_p(\mu_0, \mu_1) + \ell_p(\mu_1, \mu_2) \leq \ell_p(\mu_0, \mu_2),
\]
where by convention we set $\infty - \infty = - \infty$.
The space $\mathcal{P}(\H)$ of probability measures on $\H$ can thus be endowed with a Lorentzian structure induced from $\ell_p$, and we call this structure the Lorentz-Wasserstein space. Details on the geometry of the Lorentz-Wasserstein space can be found in \cite{CavallettiMondino2022, CavallettiMondino2023}. We say that $(\mu_t)_{t \in \interval{0}{1}} \subseteq \mathcal{P}(\H)$ is an $\ell_p$-geodesic if
\begin{equation}
	\label{eq:Wassersteingeodesic}
	\ell_p(\mu_s, \mu_t) = (t - s) \ell_p(\mu_0, \mu_1), \qquad \text{ for all } t, s \in \interval{0}{1} \text{ with } s \leq t.
\end{equation}

The standard concept of cyclical monotonicity can also be introduced in this context, following \cite[Section 2.2]{CavallettiMondino2023}.
\begin{definition}
	\label{def:cyclicallymonotone}
	A set $\Gamma \subset \H^2_\le$ is $c_p$-cyclically monotone
    if we have
    \[
        \sum_{i=1}^N c_p(x_i, y_i) \geq \sum_{i=1}^N c_p(x_{i+1},y_i),
    \]
	for all $N\in\mathbb{N}$ and all $(x_1, y_1),\ldots,(x_N, y_N)\in \Gamma$
    such that $x_{N+1}=x_1$. A coupling $\pi \in \mathcal{P}(\H \times \H)$ is said to be $c_p$-cyclically monotone if it is concentrated on a $c_p$-cyclically monotone set, i.e. if there is a $c_p$-cyclically monotone set $\Gamma \subseteq \H_{\leq}^2$ such that $\pi(\Gamma) = 1$.
\end{definition}

Note that since $c_p$ is continuous by \cref{theorem:propertytau}, any $c_p$-cyclically monotone set is closed. In particular, a coupling $\pi$ is $c_p$-cyclically monotone if and only if $\mathrm{supp}(\pi)$ is $c_p$-cyclically monotone. Kantorovich found out that cyclical monotonicity was actually closely related to optimality. The same can be said in the Lorentzian setting, see \cite[Proposition 2.8]{CavallettiMondino2023}.

\begin{proposition}
	\label{prop:CMandOptimality}
	Let $\mu,\nu\in\mathcal{P}(\H)$. Suppose that $\Pi_\le(\mu,\nu)\neq \varnothing$ and that there exist measurable functions $a,b:\H \to \R$, with $a\oplus b \in \L^1(\mu\otimes\nu)$ and $\tau^p(q_0, q) \leq a(q_0) + b(q)$ for all $(q_0, q) \in \mathrm{supp}(\mu) \times \mathrm{supp}(\nu)$. Then the following holds.
\begin{theoenum}
        \item If $\pi$ is an optimal transport plan from $\mu$ to $\nu$ and $P_1(\Gamma) \times P_2(\Gamma) \subseteq \H_\leq^2$, then it is $c_p$-cyclically monotone.
        \item If $\pi$ is $c_p$-cyclically monotone and $\pi(\H^2_{\ll})=1$, then $\pi$ is an optimal transport plan from $\mu$ to $\nu$.
\end{theoenum}
\end{proposition}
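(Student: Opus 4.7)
Both parts rely on the classical Kantorovich--Rockafellar scheme of optimal transport theory, modified to respect the causality constraint $\pi(\H_\le^2)=1$. For (i), I would argue by contradiction via an explicit mass-swap perturbation of $\pi$. For (ii), I would build a Kantorovich potential by a Rockafellar-type supremum formula and close with the usual duality inequality. The causality-specific complications amount to ensuring competitor plans remain in $\Pi_\le(\mu,\nu)$ and that the potential is $\L^1$ against $\mu$ and $\nu$.

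For (i), fix a Borel set $\Gamma\subseteq\mathrm{supp}(\pi)\cap\H_\le^2$ with $\pi(\Gamma)=1$ and $P_1(\Gamma)\times P_2(\Gamma)\subseteq\H_\le^2$, and suppose toward a contradiction that $\Gamma$ is not $c_p$-cyclically monotone: there exist $N\in\N$ and pairs $(x_i,y_i)_{i=1}^N\subseteq\Gamma$ (indices understood modulo $N$) with $\sum_i c_p(x_{i+1},y_i)>\sum_i c_p(x_i,y_i)$. Continuity of $c_p$ on $\H_\le^2$ (\cref{theorem:propertytau}) together with the product hypothesis produce open neighbourhoods $U_i\ni x_i$, $V_i\ni y_i$ with $U_{i+1}\times V_i\subseteq\H_\le^2$ on which the strict inequality is preserved uniformly. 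The numbers $m_i:=\pi(U_i\times V_i)$ are strictly positive because $(x_i,y_i)\in\mathrm{supp}(\pi)$; writing $\pi^i:=\pi|_{U_i\times V_i}/m_i$ with marginals $\alpha_i:=(P_1)_\sharp\pi^i$ and $\beta_i:=(P_2)_\sharp\pi^i$, the competitor
\[
\tilde\pi:=\pi+\eta\sum_{i=1}^N\bigl(\alpha_{i+1}\otimes\beta_i-\pi^i\bigr),\qquad 0<\eta<\min_i m_i,
\]
is a nonnegative Borel measure with the same marginals as $\pi$, is supported on $\H_\le^2$ (the new mass lying in $\bigcup_i U_{i+1}\times V_i$), and, after possibly shrinking the neighbourhoods, satisfies $\int c_p\,\diff\tilde\pi>\int c_p\,\diff\pi$, contradicting optimality.

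For (ii), let $\Gamma\subseteq\mathrm{supp}(\pi)\cap\H_\ll^2$ be a $c_p$-cyclically monotone Borel set with $\pi(\Gamma)=1$; on $\H_\ll^2$ the cost $c_p$ is continuous by \cref{theorem:propertytau}. Fix $(x_0,y_0)\in\Gamma$ and define the Rockafellar potential
\[
\phi(x):=\sup\Bigl\{\sum_{i=0}^{N-1}\bigl[c_p(x_{i+1},y_i)-c_p(x_i,y_i)\bigr]:N\in\N,\,(x_i,y_i)_{i=0}^{N-1}\subseteq\Gamma,\,x_N=x\Bigr\}.
\]
Closing the chain at $x_N=x_0$ and invoking cyclical monotonicity gives $\phi(x_0)\le 0$, while the trivial chain $N=1$, $x_1=x_0$ forces $\phi(x_0)\ge 0$, so $\phi$ is real-valued on $P_1(\Gamma)$. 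Its $c_p$-transform $\phi^c(y):=\sup_x[c_p(x,y)-\phi(x)]$ then satisfies $\phi\oplus\phi^c\ge c_p$ on $\H\times\H$, with equality on $\Gamma$ (append $(x,y)\in\Gamma$ as the final pair of any admissible chain). The integrable envelope $c_p\le a\oplus b\in\L^1(\mu\otimes\nu)$ yields $\phi\in\L^1(\mu)$ and $\phi^c\in\L^1(\nu)$, and for any $\sigma\in\Pi_\le(\mu,\nu)$,
\[
\int c_p\,\diff\sigma\le\int(\phi\oplus\phi^c)\,\diff\sigma=\int\phi\,\diff\mu+\int\phi^c\,\diff\nu=\int(\phi\oplus\phi^c)\,\diff\pi=\int c_p\,\diff\pi,
\]
which shows that $\pi$ maximises \cref{eq:LorentzKantorovich}.

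\textbf{Main obstacle.} The genuinely Lorentzian subtlety lies in (i): the competitor $\tilde\pi$ must remain in $\Pi_\le(\mu,\nu)$, which is precisely what the hypothesis $P_1(\Gamma)\times P_2(\Gamma)\subseteq\H_\le^2$ buys. In (ii), the delicate points are measurability of $\phi$ (obtained by restricting the supremum to a countable family of chains drawn from a $\sigma$-compact version of $\Gamma$, using inner regularity of $\pi$) and the $\L^1$ bounds on $\phi$ and $\phi^c$, both of which reduce to standard arguments given the integrable envelope $a\oplus b$ coming from hypothesis~(ii) of \cref{prop:optimalexistence}.
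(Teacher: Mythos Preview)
The paper does not prove this proposition at all: it simply records the statement and refers the reader to \cite[Proposition 2.8]{CavallettiMondino2023}. Your proposal therefore goes well beyond what the paper does, and what you outline is precisely the standard Kantorovich--Rockafellar argument adapted to the causal setting, which is essentially the content of the cited reference.

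One minor point in part (i): you write that one can find open neighbourhoods with $U_{i+1}\times V_i\subseteq\H_\le^2$, but this is not generally possible when $(x_{i+1},y_i)$ is null-related rather than timelike-related, since $\H_\le^2$ need not contain an open product rectangle around such a point. Fortunately you do not need this inclusion: because $\pi(\Gamma)=1$, the marginals $\alpha_{i+1}$ and $\beta_i$ of $\pi^i$ are automatically concentrated on $P_1(\Gamma)$ and $P_2(\Gamma)$ respectively, so $\alpha_{i+1}\otimes\beta_i$ is concentrated on $P_1(\Gamma)\times P_2(\Gamma)\subseteq\H_\le^2$ by hypothesis. The open neighbourhoods are only needed to preserve the strict cost inequality via continuity of $\tau$, and for that no causal containment is required. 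With this small correction your argument for (i) goes through. Your sign convention in (ii) differs from the paper's (your $\phi$ corresponds to $-\varphi$ in \cref{def:subdifferential}), but this is harmless.
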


Our next goal is to introduce the concept of duality, a fundamental result in optimal transport theory.  The dual problem consists in minimising the functional
 \begin{equation}
 	\label{eq:dualproblem}
 	D(u, v) := \int_{\H} u \diff \mu + \int_{\H} v \diff \nu
 	\tag{D}
 \end{equation}
 among the pair of functions $u, v : \H \to \R \cup \left\{+ \infty \right\}$ with $u \oplus v \geq c_p$ on $\mathrm{supp}(\mu) \times \mathrm{supp}(\nu)$ and $u \oplus v \in \mathrm{L}^1(\mu \otimes \nu)$. When $\sup K(\pi) = \inf D(u, v)$, we say that Kantorovich duality holds for the pair of measures $(\mu, \nu)$. When $\max K(\pi) = \min D(u, v)$, we say that strong Kantorovich duality holds for the pair of measures $(\mu, \nu)$.

It is therefore natural to ask whether the infimum in \cref{eq:dualproblem} is attained for some functions $u$ and $v$. The following definition is key to addressing this question.

\begin{definition}[$c_p$-transform, $c_p$-concavity, $c_p$-subdifferential]
\label{def:subdifferential}
    Let $A_1$, $A_2$ be non-empty subsets of $\H$, and $\varphi: A_1 \to \R$ a function.
    The function
    \[
    \varphi^{(c_p)}: A_2 \to \R \cup \{+\infty\} : y \mapsto \sup \left\{ \varphi(x) + c_p(x, y) \mid x \in A_1 \right\}
    \]
    is the \textit{$c_p$-transform of $\varphi$ (relative to $(A_1,A_2)$)}. The function $\varphi$ is \textit{$c_p$-concave (relative to $(A_1,A_2)$}) if there is a function $\psi: A_2 \to \R \cup \{+\infty\}$ with
    \begin{equation}
        \label{eq:taupconcave}
        \varphi(x) = \inf \left\{ \psi(y) - c_p(x, y) \mid y \in A_2 \right\}, \text{ for all } x \in A_1.
    \end{equation}
    The \emph{$c_p$-subdifferential} of the function $\phi$ is the set $\partial_{c_p}\varphi$ defined as
    \begin{align*}
    	\partial_{c_p}\varphi :={}& \left\{ (x,y) \in (A_1 \times A_2) \cap \H^2_\le \mid \varphi^{(c_p)}(y) - \varphi(x) = c_p(x, y) \right\} \\
    	={}& \left\{ (x,y) \in (A_1 \times A_2) \cap \H^2_\le \mid \forall z \in A_1, \ \phi(x) + c_p(x, y) \leq \phi(z) + c_p(x, z) \right\}.
    \end{align*}
    We say that a pair of functions $(u, v)$ are \emph{$c_p$-conjugate (relative to $(A_1, A_2)$)} if $u : A_1 \to \R$, $v : A_2 \to \R \cup \{+\infty\}$ and $v = u^{(c_p)}$. 
\end{definition}

Let us prove some useful properties about $c_p$-concave functions.

\begin{lemma}
	\label{lemma:concaveiff}
	Let $A_1$ and $A_2$ be non-empty subsets of $\H$. For any $\varphi: A_1 \to \R$, we have
	\begin{equation}
		\label{eq:tauptransforminequality}
		\inf_{y \in A_2}  \left(\phi^{(c_p)}(y) - c_p(x, y)\right) \geq \phi(x), \ \text{ for all } x \in A_1,
	\end{equation}
	and \cref{eq:tauptransforminequality} is an equality if and only if $\phi$ is $c_p$-concave relative to $(A_1, A_2)$. The $c_p$-transform $\phi^{(c_p)}$ of $\phi$ is bounded from below.
\end{lemma}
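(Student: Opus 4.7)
The plan is to handle the three claims in turn by direct manipulation of the definitions, without appealing to any geometric feature of the sub-Lorentzian Heisenberg group beyond $c_p \geq 0$.

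\textbf{The inequality.} First I would observe that by \cref{def:subdifferential}, for every $y \in A_2$ we have $\phi^{(c_p)}(y) \geq \phi(x) + c_p(x, y)$ since $x \in A_1$ is admissible in the supremum defining $\phi^{(c_p)}(y)$. Rearranging gives $\phi^{(c_p)}(y) - c_p(x, y) \geq \phi(x)$, and taking the infimum over $y \in A_2$ yields \cref{eq:tauptransforminequality}.

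\textbf{The equivalence.} For the ``if'' direction, assume $\phi$ is $c_p$-concave relative to $(A_1, A_2)$, so that there exists $\psi : A_2 \to \R \cup \{+\infty\}$ with $\phi(x) = \inf_{y \in A_2} (\psi(y) - c_p(x, y))$ for every $x \in A_1$. This forces $\psi(y) \geq \phi(x) + c_p(x, y)$ for all $(x, y) \in A_1 \times A_2$; taking the supremum in $x$ yields $\psi(y) \geq \phi^{(c_p)}(y)$. Therefore
\begin{equation*}
\phi(x) = \inf_{y \in A_2}(\psi(y) - c_p(x, y)) \geq \inf_{y \in A_2}(\phi^{(c_p)}(y) - c_p(x, y)) \geq \phi(x),
\end{equation*}
where the last inequality is the one just established, so all terms are equal. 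The ``only if'' direction is immediate: if \cref{eq:tauptransforminequality} is an equality, then the function $\psi := \phi^{(c_p)} : A_2 \to \R \cup \{+\infty\}$ itself witnesses the $c_p$-concavity of $\phi$ in the sense of \cref{eq:taupconcave}.

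\textbf{Lower boundedness of $\phi^{(c_p)}$.} Since $A_1$ is non-empty, pick any $x_0 \in A_1$. From $c_p = \tau^p/p \geq 0$ and the definition of $\phi^{(c_p)}$, we get $\phi^{(c_p)}(y) \geq \phi(x_0) + c_p(x_0, y) \geq \phi(x_0)$ for every $y \in A_2$, and $\phi(x_0) \in \R$ since $\phi$ is real-valued on $A_1$.

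There is no real obstacle here: the lemma is a formal consequence of the definitions of $c_p$-transform and $c_p$-concavity, with the only mild input being $c_p \geq 0$ for the boundedness statement.
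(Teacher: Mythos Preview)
Your proof is correct and follows essentially the same approach as the paper's. The only cosmetic differences are that for the ``if'' direction the paper writes out an $\inf$--$\sup$--$\inf$ chain rather than first noting $\psi \geq \phi^{(c_p)}$, and for lower boundedness the paper argues by contradiction rather than directly; both of the paper's arguments also rely on $c_p \geq 0$, so the underlying content is identical.
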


\begin{proof}
	For $x \in A_1$, we can write that
\begin{align*}
    \inf_{y \in A_2}  \left(\phi^{(c_p)}(y) - c_p(x, y)\right)  ={}& \inf_{y \in A_2} \sup_{x' \in A_1} \Big(\varphi(x') + c_p(x', y) - c_p(x, y)\Big) \\
    \geq{}& \inf_{y \in A_2} \Big(\varphi(x) + \tau(x,y)^p - c_p(x, y)\Big) = \phi(x),
\end{align*}	
where the inequality is obtained by taking $x' = x$ in the supremum. 

On the one hand, when \cref{eq:tauptransforminequality} is an equality for all $x \in A_1$, the function $\phi$ is clearly $c_p$-concave, simply by taking $\psi = \phi^{c_p}$ in \cref{def:subdifferential}. One the other hand, if $\phi$ is $c_p$-concave, then
\begin{align*}
    \inf_{y \in A_2}  \left(\phi^{(c_p)}(y) - c_p(x, y)\right)  ={}& \inf_{y \in A_2} \sup_{x' \in A_1} \Big(\varphi(x') + c_p(x', y) - c_p(x, y)\Big) \\
    ={}& \inf_{y \in A_2} \sup_{x' \in A_1} \inf_{y' \in A_2} \Big(\psi(y') - c_p(x', y') + c_p(x', y) - c_p(x, y)\Big)\\
    \leq{}& \inf_{y \in A_2} \sup_{x' \in A_1} \Big(\psi(y) - c_p(x', y) + c_p(x', y) - c_p(x, y)\Big)\\
    ={}& \inf_{y \in A_2} \Big(\psi(y) - c_p(x, y)\Big) = \phi(x),
\end{align*}
where the inequality is obtained by taking $y' = y$ in the infimum.

Finally, suppose that $\phi^{(c_p)}$ is not bounded from below, i.e. there is a sequence $(y_n)_{n \in \N} \subseteq A_2$ with $\phi^{(c_p)}(y_n) \neq +\infty$ and $\lim_{n \to +\infty} \phi^{(c_p)}(y_n) = - \infty$. Then, we have that
	\begin{equation}
		\label{eq:phitaupbounded}
		\phi(x) \leq{} \inf_{y \in A_2}  \left(\phi^{(c_p)}(y) - c_p(x, y)\right) \leq \phi^{(c_p)}(y_n) - c_p(x, y_n) \leq \phi^{(c_p)}(y_n),
	\end{equation}
	and thus, by taking the limit $n \to +\infty$, we obtain $\phi(x) \leq -\infty$, which is impossible since $\phi$ is real-valued.
\end{proof}

\begin{remark}
	\label{remark:phitaupnotinfinity}
	The $c_p$-transform of a $c_p$-concave function $\phi$ cannot be identically $+\infty$. Indeed, if that were the case, we would have $\phi(x) = +\infty$ for all $x \in A_1$ because of the equality in \cref{eq:tauptransforminequality} but, by definition, a $c_p$-concave function is real-valued.
\end{remark}

When the sets $A_1$ or $A_2$ are bounded, a $c_p$-concave function $\phi$ and its $c_p$-transform enjoy even more regularity properties.

\begin{lemma}
	Let $A_1$ and $A_2$ be non-empty subsets of $\H$ and $(\phi, \phi^{(c_p)})$ be a pair of $c_p$-conjugate functions relative to $(A_1, A_2)$.
	\begin{theoenum}
		\item If $A_2$ is bounded, then $\phi^{(c_p)}$ is bounded from above, and $\phi$ is continuous. 
        \item If $A_1$ is bounded, then $\phi$ is bounded from above, $\phi^{(c_p)}$ is real-valued and continuous.
    \end{theoenum}
\end{lemma}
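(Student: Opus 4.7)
The plan rests on three ingredients: the continuity of $c_p = \tau^p/p$ on $\H^2$ from \cref{theorem:propertytau}; the inf-representation $\phi(x) = \inf_{y \in A_2}(\phi^{(c_p)}(y) - c_p(x, y))$ from \cref{lemma:concaveiff} (which I take to mean that $\phi$ is $c_p$-concave, implicit in the term ``$c_p$-conjugate pair''); and the already-established facts that $\phi^{(c_p)}$ is bounded below by $\phi(x_0)$ for any $x_0 \in A_1$ and is not identically $+\infty$, from \cref{lemma:concaveiff} and \cref{remark:phitaupnotinfinity}. Since $(\H, \diff)$ is locally compact, boundedness of $A_i$ yields compactness of $\overline{A_i}$, on which continuity of $c_p$ upgrades to uniform continuity when paired with any other compact.

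\textbf{Part (ii).} Assume $A_1$ is bounded. By \cref{remark:phitaupnotinfinity}, I would first fix some $y_0 \in A_2$ with $M := \phi^{(c_p)}(y_0) < +\infty$. Since $c_p \geq 0$, the inf-representation gives $\phi(x) \leq \phi^{(c_p)}(y_0) - c_p(x, y_0) \leq M$ for every $x \in A_1$, establishing the upper bound on $\phi$. This, combined with the compactness of $\overline{A_1}$ and the continuity of $c_p$, yields $\phi^{(c_p)}(y) \leq M + \max_{x \in \overline{A_1}} c_p(x, y) < +\infty$ for every $y \in A_2$. Continuity of $\phi^{(c_p)}$ then follows by observing that the family $\{\phi(x) + c_p(x, \cdot)\}_{x \in A_1}$ is uniformly bounded above on compacts $K \subseteq A_2$ and equicontinuous on $K$ (by uniform continuity of $c_p$ on $\overline{A_1} \times K$), so that the supremum $\phi^{(c_p)}$ inherits continuity via the usual stability bound $|\sup_x f_x(y_n) - \sup_x f_x(y)| \leq \sup_x |f_x(y_n) - f_x(y)|$.

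\textbf{Part (i), continuity of $\phi$.} Assume $A_2$ is bounded. The representation $\phi(x) = \inf_{y \in A_2}(\phi^{(c_p)}(y) - c_p(x, y))$ writes $\phi$ as an infimum of continuous functions of $x$, and the lower bound on $\phi^{(c_p)}$ keeps the infimum finite. By uniform continuity of $c_p$ on $K \times \overline{A_2}$ for $K \subseteq A_1$ compact, the family $\{c_p(\cdot, y)\}_{y \in A_2}$ is equicontinuous on $K$, so the dual stability estimate $|\inf_y f_n(y) - \inf_y f(y)| \leq \sup_y |f_n(y) - f(y)|$ applied to $f_n(y) = \phi^{(c_p)}(y) - c_p(x_n, y)$ gives continuity of $\phi$ as $x_n \to x$ in $A_1$.

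\textbf{The main obstacle.} The hardest step is the upper bound on $\phi^{(c_p)}$ in (i), where only $A_2$ is bounded. My approach is to substitute the inf-bound $\phi(x) \leq \phi^{(c_p)}(y_0) - c_p(x, y_0)$ (with $\phi^{(c_p)}(y_0) < +\infty$) into the sup defining $\phi^{(c_p)}(y)$, producing
\[
\phi^{(c_p)}(y) \leq \phi^{(c_p)}(y_0) + \sup_{x \in A_1}\bigl(c_p(x, y) - c_p(x, y_0)\bigr),
\]
and then to bound the remaining supremum uniformly in $y \in A_2$. Since $A_1$ may be unbounded, this requires asymptotic control of the differences $c_p(x, y) - c_p(x, y_0)$ as $x$ escapes to infinity. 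I would extract it from the explicit formula for $\tau$ quoted after \cref{theorem:propertytau}, the upper bound \cref{eq:tauleqptau}, and the reverse triangle inequality, splitting into cases depending on whether $x$ lies in a compact causal predecessor of a fixed neighbourhood of $\overline{A_2}$ (where a direct continuity argument yields a bound) or is far from $\overline{A_2}$ (where the sublinear growth of $\tau(x, \cdot)$ as a function of its second argument, visible from \cref{eq:tauleqptau}, should tame the difference). This Heisenberg-specific case analysis is the piece I expect to be the most technically involved.
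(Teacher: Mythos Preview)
Your arguments for part (ii) and for the continuity of $\phi$ in part (i) are correct, but they differ from the paper's. You use uniform continuity of $c_p$ on products of compacts to get equicontinuity of the families $\{\phi(x)+c_p(x,\cdot)\}_x$ and $\{\phi^{(c_p)}(y)-c_p(\cdot,y)\}_y$, and then pass sup/inf through via the elementary stability bound. The paper instead argues sequentially: given $q_n\to q$, it picks near-minimisers $y_n\in A_2$ of the inf-representation of $\phi(q_n)$, extracts a convergent subsequence using compactness of $\overline{A_2}$, and sandwiches $\phi(q_n)$ between two expressions converging to $\phi(q)$. Your route is shorter and avoids the subsequence extraction; the paper's is more hands-on but uses nothing beyond continuity of $c_p$ in each variable.

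The divergence that matters is the upper bound on $\phi^{(c_p)}$ in (i). The paper disposes of this in three lines by contradiction: take $y_n\in A_2$ with $\phi^{(c_p)}(y_n)\to+\infty$, pass to $y_n\to y\in\overline{A_2}$, and use the displayed inequality $\phi^{(c_p)}(y_n)\le\varphi(x)+c_p(x,y_n)$ for an arbitrary $x\in A_1$ to force $\phi(x)=+\infty$. But that inequality is the \emph{reverse} of what the definition $\phi^{(c_p)}(y)=\sup_{x}(\phi(x)+c_p(x,y))$ gives, so as written the paper's argument does not close. Your instinct that this step is genuinely the delicate one---precisely because $A_1$ is unbounded and one needs control on $\sup_{x\in A_1}(c_p(x,y)-c_p(x,y_0))$---looks well founded. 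That said, your proposed Heisenberg-specific case analysis is only sketched; if you pursue it, the sublinearity coming from $p\in(0,1)$ together with \cref{eq:tauleqptau} should indeed tame the far-away contribution, but you will need to make that precise.
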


\begin{proof}
	Let us start by proving (i). Assume that $A_2$ is bounded and suppose, for a contradiction, that there is a sequence $(y_n)_{n \in \N} \subseteq A_2$ with $\lim_{n \to +\infty} \phi^{(c_p)}(y_n) = +\infty$. For any $x \in A_1$, it holds that
	\[
	\phi^{(c_p)}(y_n) \leq \varphi(x) + c_p(x, y_n).
	\]
	Up to taking a subsequence, we can assume that $y_n \to y \in \mathrm{cl}(A_2)$ when $n \to +\infty$. By taking the limit above and recalling that $\tau$ is continuous by \cref{theorem:propertytau}, we obtain $\phi(x) = +\infty$ for all $x \in A_1$. This is impossible because $\phi$ is real-valued.

We now prove that $\phi$ is continuous. Let $q \in A_1$ and $(q_n)_{n \in \N} \subseteq A_1$ be a sequence converging to $q$. \cref{remark:phitaupnotinfinity} clarified the fact that $\phi^{(c_p)}$ is bounded from below and so the infimum property in \cref{eq:taupconcave} implies that for all $n$ sufficiently large, there exists $y_n \in A_2$ with
\begin{equation}
    \label{eq:proofinf1}
    \varphi(q_n) \geq \phi^{(c_p)}(y_n) - c_p(q_n, y_n) - \frac1n.
\end{equation}
By compactness of $\mathrm{cl}(A_2)$, we can extract a subsequence of $(y_n)_{n \in \N}$, which we can assume without loss of generality to be $(y_n)_{n \in \N}$, that converges to some $y \in \mathrm{cl}(A_2)$. We then have that for all $n \in \N$
\begin{align*}
    \phi(q) \leq{}& \phi^{(c_p)}(y_n) - c_p(q, y_n) = \phi^{(c_p)}(y_n) - c_p(q, y_n) + c_p(q_n, y_n) - c_p(q_n, y_n)\\
    \leq{}&\phi^{(c_p)}(y_n) - c_p(q, y_n) + c_p(q_n, y_n) + \phi(q_n) - \phi^{(c_p)}(y_n) + \frac{1}{n},
\end{align*}
the first inequality being justified by the infimum in \cref{eq:taupconcave} and the second by \cref{eq:proofinf1}. Simplifying and rearranging gives
\begin{equation}
    \label{eq:proofinf2}
    \phi(q_n) \geq \phi(q) + c_p(q, y_n) - c_p(q_n, y_n) - \frac{1}{n}.
\end{equation}
Similarly, for all $n$ sufficiently large, there exists $z_n \in A_2$ such that
\begin{equation}
    \label{eq:proofinf3}
    \varphi(q) \geq \phi^{(c_p)}(z_n) - c_p(q, z_n) - \frac1n.
\end{equation}
By compactness of $\mathrm{cl}(A_2)$ again, we can extract a subsequence of $(z_n)_{n \in \N}$, which we can assume without loss of generality to be $(z_n)_{n \in \N}$, that converges to some $z \in \mathrm{cl}(A_2)$. We then have that for all $n \in \N$
\begin{align*}
    \phi(q_n) \leq{}& \phi^{(c_p)}(z_n) - c_p(q_n, z_n) = \phi^{(c_p)}(z_n) - c_p(q_n, z_n) + c_p(q, z_n) - c_p(q, z_n)\\
    \leq{}&\phi^{(c_p)}(z_n) - c_p(q_n, z_n) + c_p(q, z_n) + \phi(q) - \phi^{(c_p)}(z_n) + \frac{1}{n}\\
    ={}& \phi(q) + c_p(q, z_n) - c_p(q_n, z_n) + \frac{1}{n},
\end{align*}
the first inequality being justified by the infimum in \cref{eq:taupconcave} and the second by \cref{eq:proofinf3}. Since $\tau$ is continuous in both variables by \cref{theorem:propertytau}, we deduce that $\phi(q_n)$ tends to $\phi(q)$ by letting $n \to +\infty$ in the inequality just above and in \cref{eq:proofinf2}.

In the proof of (ii), the arguments showing that $\phi$ is bounded from above and $\phi^{(c_p)}$ is continuous are similar to that of (i). The map $\phi^{(c_p)}$ is real-valued because
	\[
	\phi^{(c_p)}(y) = \sup_{x \in A_1} \left( \varphi(x) + c_p(x, y) \right) \leq \sup_{x \in A_1}  \varphi(x) + \sup_{x \in A_1} c_p(x, y) < +\infty,
	\]
	for all $y \in A_2$.
\end{proof}

In order to state and prove the next result, we will need to fix some notation. Given $A_1$ and $A_2$ two non-empty subsets of $\H$, $(\phi, \phi^{(c_p)})$ a pair of $c_p$-conjugate functions relative to $(A_1, A_2)$, and $R > 0$, we set
\begin{equation}
	\label{eq:AR}
	A_1^R := \{x \in \H \mid \diff_{\mathrm{sR}}(x, A_1) < R \}, \text{ and } A_2^R := \{x \in \H \mid \diff_{\mathrm{sR}}(x, A_2) < R \}.
\end{equation}
We also define
\begin{equation}
	\label{eq:phir}
	\phi_R(x) := \inf_{y \in A_2} \Big(\phi^{(c_p)}(y) - c_p(x, y)\Big), \ \text{ for } x \in A_1^R,
\end{equation}
and
\begin{equation}
	\label{eq:phitaupr}
	\phi^{(c_p)}_R (y) := \sup_{x \in A_1} \Big( \varphi(x) + c_p(x, y) \Big), \ \text{ for } y \in A_2^R.
\end{equation}

\begin{lemma}
	\label{lemma:phiRphicpR}
	Let $R > 0$, $A_1$ and $A_2$ be non-empty subsets of $\H$, and $(\phi, \phi^{(c_p)})$ be a pair of $c_p$-conjugate functions relative to $(A_1, A_2)$.
	\begin{theoenum}
		\item If $A_2$ is bounded, then $\phi_R$ is real-valued and $(\phi_R, \phi^{(c_p)})$ is a pair of $c_p$-conjugate functions relative to $(A_1^R, A_2)$.
		\item If $A_1$ is bounded, then $\phi^{(c_p)}_R$ is real-valued and $(\phi, \phi^{(c_p)}_R)$ is a pair of $c_p$-conjugate functions relative to $(A_1, A_2^R)$. 
		\item If $A_1$ and $A_2$ are bounded, then both $\phi_R$ and $\phi^{(c_p)}_R$ are real-valued and $(\phi_R, \phi^{(c_p)}_R)$ is a pair of $c_p$-conjugate functions relative to $(A_1^R, A_2^R)$. 
    \end{theoenum}
\end{lemma}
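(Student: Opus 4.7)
My plan is to treat (i), (ii), and (iii) in sequence, with (iii) obtained by iterating (i) and (ii).

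To prove (i), I would first argue that $\phi_R$ is real-valued on $A_1^R$: by the preceding lemma, the boundedness of $A_2$ forces $\phi^{(c_p)}$ to be bounded (and thus real-valued) on $A_2$, while continuity of $c_p$ from \cref{theorem:propertytau} keeps $c_p(x, \cdot)$ bounded on the bounded set $A_2$ for each fixed $x \in A_1^R$, so the infimum in the definition of $\phi_R(x)$ is finite. For the conjugacy $\phi^{(c_p)}(y) = \sup_{x \in A_1^R}(\phi_R(x) + c_p(x, y))$ on $A_2$, the inequality $\leq$ is seen by restricting the supremum to the subset $A_1$ of $A_1^R$ and using $\phi_R \geq \phi$ on $A_1$, which is an immediate consequence of \cref{lemma:concaveiff}; the inequality $\geq$ follows directly from the definition of $\phi_R$ as an infimum over $A_2$, which yields $\phi_R(x) + c_p(x, y) \leq \phi^{(c_p)}(y)$ for every $x \in A_1^R$ and $y \in A_2$.

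For (ii), the claim is essentially tautological: by \cref{eq:phitaupr}, the function $\phi_R^{(c_p)}$ is defined as $\sup_{x \in A_1}(\phi(x) + c_p(x, y))$ on $A_2^R$, which is exactly the $c_p$-transform of $\phi$ relative to $(A_1, A_2^R)$, so $(\phi, \phi_R^{(c_p)})$ is $c_p$-conjugate by construction. Real-valuedness of $\phi_R^{(c_p)}$ follows from the boundedness of $\phi$ above on the bounded set $A_1$ (given by the previous lemma) together with continuity of $c_p$, which keeps $c_p(\cdot, y)$ bounded on $A_1$ for each fixed $y$.

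For (iii), my strategy is to chain (i) and (ii). Applying (i) yields a $c_p$-conjugate pair $(\phi_R, \phi^{(c_p)})$ relative to $(A_1^R, A_2)$; since $A_1^R$ is bounded when $A_1$ is, (ii) then applies to this new pair, producing a $c_p$-conjugate pair $(\phi_R, \Psi)$ over $(A_1^R, A_2^R)$ with $\Psi(y) := \sup_{x \in A_1^R}(\phi_R(x) + c_p(x, y))$. The remaining step is to identify $\Psi$ with $\phi_R^{(c_p)}$ on $A_2^R$. The direction $\Psi \geq \phi_R^{(c_p)}$ comes once more from $A_1 \subseteq A_1^R$ and $\phi_R \geq \phi$ on $A_1$. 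The main obstacle will be the reverse direction $\Psi \leq \phi_R^{(c_p)}$: for each $x \in A_1^R$ one must bound $\phi_R(x) + c_p(x, y)$ by $\sup_{x' \in A_1}(\phi(x') + c_p(x', y))$. For $y \in A_2$ this is immediate by taking $y'' = y$ in the infimum defining $\phi_R(x)$, which gives $\phi_R(x) + c_p(x, y) \leq \phi^{(c_p)}(y) = \phi_R^{(c_p)}(y)$; for $y \in A_2^R \setminus A_2$ the argument is more subtle and would rely on approximating $y$ by a point $y'' \in A_2$ and using the continuity of $c_p$ from \cref{theorem:propertytau} to absorb the resulting error, together with the boundedness properties established in (i) and (ii).
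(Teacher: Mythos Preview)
Your treatment of (i) and (ii) is correct and largely parallels the paper's approach, though you are more explicit in (i) about verifying both inequalities in the identity $\phi^{(c_p)} = (\phi_R)^{(c_p)}$; the paper merely says the claim ``is concluded with \cref{eq:phir} and \cref{lemma:concaveiff}''. For (ii) you rightly observe that the $c_p$-conjugacy is tautological from \cref{eq:phitaupr}. Note, however, that the paper's proof of (ii) actually establishes something stronger than the literal statement, namely that $\phi$ remains $c_p$-concave relative to $(A_1, A_2^R)$, i.e.\ $\phi(x) = \inf_{y \in A_2^R}(\phi_R^{(c_p)}(y) - c_p(x,y))$; this extra fact is presumably what the paper has in mind when it says (iii) is ``just a combination of the reasonings above''.

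Your approach to (iii), by contrast, has a genuine gap. After chaining (i) and (ii) you must show $\Psi(y) \leq \phi_R^{(c_p)}(y)$ for $y \in A_2^R \setminus A_2$, and you propose to approximate such $y$ by some $y'' \in A_2$ and absorb the error via continuity of $c_p$. This cannot work: membership $y \in A_2^R$ means only $\diff_{\mathrm{sR}}(y, A_2) < R$, so there is a point of $A_2$ at distance less than $R$ from $y$, but in general no sequence in $A_2$ converging to $y$. The error term $c_p(x,y) - c_p(x,y'')$ therefore cannot be made arbitrarily small. Even granting smallness of that term, you would still need $\phi^{(c_p)}(y'')$ to approximate $\phi_R^{(c_p)}(y)$, which is a continuity assertion relating two different functions at two different points and is nowhere established. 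A cleaner route to (iii) is the one implicit in the paper: first use the strengthened (ii) to see that $(\phi, \phi_R^{(c_p)})$ is a genuine $c_p$-conjugate pair \emph{with $\phi$ $c_p$-concave} relative to $(A_1, A_2^R)$, and then rerun the argument of (i) with $A_2^R$ in place of $A_2$; this produces an extension of $\phi$ to $A_1^R$ via $\inf_{y \in A_2^R}(\phi_R^{(c_p)}(y) - c_p(x,y))$, and one must then identify that extension with $\phi_R$.
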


\begin{proof}
	When $A_2$ is bounded, then any $x \in A_1^R$ has
	\[
	\phi_R(x) \geq \inf_{y \in A_2} \phi^{(c_p)}(y) - \sup_{y \in \mathrm{cl}(A_2)} c_p(x, y) > - \infty
	\]
	since $\phi^{(c_p)}$ is bounded from below on $A_2$ by  \cref{lemma:concaveiff} and $\tau$ is continuous by \cref{theorem:propertytau}. This shows that $\phi_R$ is real-valued and the proof of (i) is concluded with  \cref{eq:phir} and \cref{lemma:concaveiff}.
	
	A similar argument shows that $\phi^{(c_p)}_R$ is real-valued if $A_1$ is bounded. In that case, we surely have
	\[
	\phi(x) = \inf_{y \in A_2} \Big( \phi^{(c_p)}_r(y) - c_p(x, y) \Big) \geq \inf_{y \in A_2^R} \Big( \phi^{(c_p)}_r(y) - c_p(x, y) \Big) 
	\]
	since $\phi^{(c_p)}_R = \phi^{(c_p)}$ on $A_2$. Note that by construction $\phi^{(c_p)}_R$ is the $c_p$-transform of $\phi$ relative to $(A_1, A_2^R)$ and so the other inequality follows from \cref{lemma:concaveiff}.
	
	Proving (iii) is just a combination of the reasonings above.
\end{proof}

We end this discussion by relating $c_p$-cyclical monotonicity and Kantorovich duality.

\begin{theorem}
\label{theorem:cCMiff}
	A Borel set $\Gamma \subseteq \H_\leq^2$ is $c_p$-cyclically monotone if and only if there exists a measurable map $\phi : P_1(\Gamma) \to \R$ that is $c_p$-concave relative to $(P_1(\Gamma), P_2(\Gamma))$ and such that $\Gamma \subseteq \partial_{c_p} \phi$.
\end{theorem}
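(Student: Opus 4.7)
The proof would split into a direct implication and its converse, following the classical Rockafellar--Rüschendorf scheme adapted to the Lorentzian (maximisation) setting. For the \emph{sufficiency} (existence of $\phi$ implies $c_p$-cyclical monotonicity), I would take any cycle $(x_1, y_1), \ldots, (x_N, y_N) \in \Gamma$ and set $x_{N+1} := x_1$. Since each $(x_i, y_i) \in \partial_{c_p}\phi$, the subdifferential relation yields $\phi(x_i) + c_p(x_i, y_i) = \phi^{(c_p)}(y_i) \geq \phi(x_{i+1}) + c_p(x_{i+1}, y_i)$. Summing over $i$ and cancelling the telescoping $\phi(x_i)$ terms would give exactly the cyclical monotonicity inequality from \cref{def:cyclicallymonotone}.

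For the \emph{converse}, I would fix a base point $(x_0, y_0) \in \Gamma$ and, in the spirit of Rockafellar, define for each $z \in P_1(\Gamma)$
\[
    \phi(z) := \inf \left\{ \sum_{i=0}^{N-1}\bigl[c_p(x_i, y_i) - c_p(x_{i+1}, y_i)\bigr] \right\},
\]
where the infimum runs over $N \geq 1$, chains $(x_1, y_1), \ldots, (x_{N-1}, y_{N-1}) \in \Gamma$, and the convention $x_N := z$. Taking $N = 1$ shows $\phi(x_0) \leq 0$, while applying the cyclical monotonicity of $\Gamma$ to the closed cycle $(x_0, y_0), \ldots, (x_{N-1}, y_{N-1})$ proves $\phi(x_0) \geq 0$; hence $\phi(x_0) = 0$. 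For an arbitrary $\bar x \in P_1(\Gamma)$ with some $(\bar x, \bar y) \in \Gamma$, appending $(\bar x, \bar y)$ to any cycle and again invoking cyclical monotonicity yields the finite lower bound $\phi(\bar x) \geq c_p(x_0, \bar y) - c_p(\bar x, \bar y)$, so $\phi$ is real-valued on $P_1(\Gamma)$.

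The subdifferential inclusion $\Gamma \subseteq \partial_{c_p}\phi$ would then come from an ``extension trick'': given $(x, y) \in \Gamma$ and $z \in P_1(\Gamma)$, any chain competing for $\phi(x)$ can be extended by the pair $(x, y)$ to produce a chain competing for $\phi(z)$, whose value equals the original value at $x$ augmented by $c_p(x, y) - c_p(z, y)$; passing to the infimum then yields $\phi(z) \leq \phi(x) + c_p(x, y) - c_p(z, y)$, which rearranges precisely to the subdifferential condition $\phi(x) + c_p(x, y) \geq \phi(z) + c_p(z, y)$. The $c_p$-concavity would follow by regrouping the chain sum as $\psi_{\mathrm{chain}}(y_{N-1}) - c_p(z, y_{N-1})$ and setting $\psi(y) := \inf \psi_{\mathrm{chain}}(y)$ over chains ending with $y_{N-1} = y$, giving $\phi(z) = \inf_{y \in P_2(\Gamma)}[\psi(y) - c_p(z, y)]$, as required by \cref{def:subdifferential}.

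The main obstacles are twofold: first, keeping the signs straight, since Lorentzian cyclical monotonicity reads with $\geq$ rather than $\leq$ (the diagonal matching maximises cost), forcing the Rockafellar potential to be an infimum rather than a supremum, and requiring the subdifferential extension to be appended at the end (rather than prepended) of the chain; second, Borel measurability of $\phi$, which is not automatic from an uncountable infimum. For the latter, I would observe that since $c_p = \tau^p/p$ is continuous on $\H^2_{\leq}$ by \cref{theorem:propertytau}, for each fixed $N$ the map $z \mapsto \phi_N(z)$ obtained by restricting the infimum to chains of length $N$ is an infimum of continuous functions, hence upper semicontinuous and Borel, and $\phi = \inf_{N \in \N} \phi_N$ is then Borel as a countable infimum of Borel functions.
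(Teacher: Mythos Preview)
Your proof is correct. The sufficiency direction (existence of $\phi$ implies $c_p$-cyclical monotonicity) is exactly the paper's argument: write $c_p(x_i,y_i)=\phi^{(c_p)}(y_i)-\phi(x_i)$, reindex the $\phi$-terms, and bound each summand using $\phi^{(c_p)}(y_i)\geq\phi(x_{i+1})+c_p(x_{i+1},y_i)$.

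For the necessity direction the paper does not give an argument at all; it simply cites \cite[Theorem~2.26]{CavallettiMondino2023}. Your self-contained Rockafellar--R\"uschendorf construction is precisely the mechanism behind that reference, so in effect you have unpacked what the paper outsources. The sign bookkeeping (infimum rather than supremum, appending at the end of the chain) is handled correctly for the maximisation convention, and your observation that $\phi$ is an infimum of continuous functions of $z$ (via the continuity of $\tau$ from \cref{theorem:propertytau}), hence upper semicontinuous and Borel, supplies the measurability that the paper's statement demands but does not justify in-text.
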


\begin{proof}
	The necessary part of the statement is the difficult one, and it is shown in the proof of \cite[Theorem 2.26]{CavallettiMondino2023}. The converse, however, is proven straightforwardly by directly
verifying that \cref{def:cyclicallymonotone} is satisfied. Indeed, if a Borel set $\Gamma \subseteq \H_\leq^2$ is contained in the $c_p$-subdifferential $\partial_{c_p} \phi$ of some measurable $c_p$-concave map $\phi$, then it must be $c_p$-cyclically monotone because
	\begin{align*}
		\sum_{i=1}^N c_p(x_i, y_i) ={}&  \sum_{i=1}^N \phi^{(c_p)}(y_i) - \phi(x_i) = \sum_{i=1}^N \phi^{(c_p)}(y_i) - \phi(x_{i + 1})
		\geq \sum_{i=1}^N c_p(x_{i+1}, y_i),
	\end{align*}
	for all $N\in\mathbb{N}$ and all $(x_1, y_1),\ldots,(x_N, y_N)\in \Gamma$
    such that $x_{N+1}=x_1$. The first equality holds because $\Gamma \subseteq \partial_{c_p} \phi$. The second equality is verified just because the sum if finite and we can freely rearrange the terms. The inequality is valid because $\phi^{c_p}(y_i) \geq \phi(x_{i + 1}) + c_p(x_{i+1}, y_i)$ by definition of the $c_p$-transform.
\end{proof}

With \cref{theorem:cCMiff} at hand, finding a pair of functions $(u, v)$ that attains the maximum in
\cref{eq:dualproblem} is not difficult, as shown in the last part of the proof of \cite[Theorem 2.26]{CavallettiMondino2023} which we reproduce in our context for the sake of clarity.

\begin{theorem}
	\label{theorem:strongkantorovich}
	Let $(\mu, \nu) \in \mathcal{P}(\H^2)$ such that $\ell_p(\mu, \nu) < +\infty$. If a plan $\pi \in \Pi_{\leq}(\mu, \nu)$ is $c_p$-cyclically monotone, then it is an optimal transport plan from $\mu$ to $\nu$ and
	\begin{equation}
	\label{eq:strongKantorovichDuality}
		\ell_p(\mu, \nu)^p = \int_\H \varphi^{(c_p)}(y) \diff \nu(y) - \int_\H \varphi(x) \diff \mu(x),
	\end{equation}
	where $\phi : P_1(\Gamma) \to \R$ is any $c_p$-concave map relative to $(P_1(\Gamma), P_2(\Gamma))$, provided by \cref{theorem:cCMiff}, corresponding to a Borel $c_p$-cyclically monotone set $\Gamma \subseteq \H_\leq^2$ such that $\pi(\Gamma) = 1$. Furthermore, strong Kantorovich duality holds for $(\mu, \nu)$.
\end{theorem}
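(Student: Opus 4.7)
The plan is to produce the $c_p$-concave function $\phi$ from Theorem~\ref{theorem:cCMiff} and use the conjugate pair $(-\phi, \phi^{(c_p)})$ to simultaneously certify optimality of $\pi$, prove the duality identity \eqref{eq:strongKantorovichDuality}, and realise strong Kantorovich duality by exhibiting an admissible pair for \eqref{eq:dualproblem} that achieves equality with $\sup K$.

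First, I would fix a Borel $c_p$-cyclically monotone set $\Gamma \subseteq \H^2_\leq$ with $\pi(\Gamma) = 1$ and apply Theorem~\ref{theorem:cCMiff} to obtain a $c_p$-concave map $\phi : P_1(\Gamma) \to \R$ relative to $(P_1(\Gamma), P_2(\Gamma))$ with $\Gamma \subseteq \partial_{c_p}\phi$. The definition of $\partial_{c_p}\phi$ immediately gives the equality $\phi^{(c_p)}(y) - \phi(x) = c_p(x, y)$ for every $(x, y) \in \Gamma$, while the definition of the $c_p$-transform, combined with $c_p \geq 0$ on $\H^2_\leq$, yields the pointwise inequality $\phi^{(c_p)}(y) - \phi(x) \geq c_p(x, y)$ for every $(x, y) \in (P_1(\Gamma) \times P_2(\Gamma)) \cap \H^2_\leq$.

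Next, I would integrate. Since $\pi$ is concentrated on $\Gamma$ and has marginals $\mu, \nu$, the identity yields
\begin{equation*}
\int c_p \diff \pi \;=\; \int \phi^{(c_p)} \diff \nu - \int \phi \diff \mu.
\end{equation*}
For any competing $\pi' \in \Pi_\leq(\mu, \nu)$, the marginals $\mu$ and $\nu$ are concentrated on $P_1(\Gamma)$ and $P_2(\Gamma)$ respectively, so $\pi'$ is concentrated on $(P_1(\Gamma) \times P_2(\Gamma)) \cap \H^2_\leq$, and the pointwise inequality integrates to $\int c_p \diff \pi' \leq \int \phi^{(c_p)} \diff \nu - \int \phi \diff \mu = \int c_p \diff \pi$. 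This proves optimality of $\pi$ and the duality identity \eqref{eq:strongKantorovichDuality}. Moreover, $(-\phi, \phi^{(c_p)})$ is then admissible for \eqref{eq:dualproblem} with $D(-\phi, \phi^{(c_p)}) = \ell_p(\mu, \nu)^p$; combined with the trivial weak duality $\sup K \leq \inf D$ (obtained by integrating the admissibility constraint $u \oplus v \geq c_p$ against any causal coupling), this gives strong Kantorovich duality.

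The principal obstacle is establishing the integrability of $\phi$ and $\phi^{(c_p)}$ separately, needed to split the integral cleanly and to verify $u \oplus v \in \L^1(\mu \otimes \nu)$. I would address this by a normalisation: replace $\phi$ by $\phi - \phi(x_0)$ for a fixed $x_0 \in P_1(\Gamma)$, which forces $\phi^{(c_p)}(y) \geq c_p(x_0, y) \geq 0$, and select $y_0 \in P_2(\Gamma)$ with $\phi^{(c_p)}(y_0) < +\infty$ (existing by Remark~\ref{remark:phitaupnotinfinity}), which gives $\phi(x) \geq c_p(x, y_0) - \phi^{(c_p)}(y_0)$. Together with the assumed finiteness $\int c_p \diff \pi = \ell_p(\mu, \nu)^p < +\infty$, a truncation and monotone convergence argument (replacing $\phi^{(c_p)}$ and $-\phi$ by their truncations at level $n$ and passing to the limit) legitimises the splitting, yielding both integrals in $\rinterval{-\infty}{+\infty}$ and completing the proof.
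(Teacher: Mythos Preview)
Your proposal is correct and follows essentially the same route as the paper: obtain $\phi$ from Theorem~\ref{theorem:cCMiff}, integrate the equality $\phi^{(c_p)}(y)-\phi(x)=c_p(x,y)$ on $\Gamma$ against $\pi$, integrate the inequality against any competitor $\pi'$, and conclude both optimality and strong duality. The paper simply asserts $\phi\in\mathrm{L}^1(\mu)$ and $\phi^{(c_p)}\in\mathrm{L}^1(\nu)$ from $\ell_p(\mu,\nu)<+\infty$, whereas you supply a more careful normalisation-and-truncation argument for the splitting; this is a welcome addition of rigour rather than a different approach.
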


\begin{proof}
	Note that, since $\pi(\Gamma) = 1$ and $\pi$ has marginals $\mu$ and $\nu$, it follows that $\mu(P_1(\Gamma)) = 1$ and $\nu(P_2(\Gamma)) = 1$.
	A $c_p$-concave map $\phi : P_1(\Gamma) \to \R$ from \cref{theorem:cCMiff} verifies that $\Gamma \subseteq \partial_{c_p} \phi$. Thus, $c_p(x, y) = \phi^{(c_p)}(y) - \phi(x)$ for all $(x, y) \in \Gamma$, and we have
	\begin{align*}
		K(\pi)={}& \int_\Gamma c_p(x, y) \diff \pi(x, y) = \int_{P_2(\Gamma)} \varphi^{(c_p)}(y) \diff \nu(y) - \int_{P_1(\Gamma)} \varphi(x) \diff \mu(x)\\
		={}& \int_\H \varphi^{(c_p)}(y) \diff \nu(y) - \int_\H \varphi(x) \diff \mu(x) \leq \sup_{\pi' \in \Pi_{\leq}(\mu, \nu)} K(\pi') =: \ell_p(\mu, \nu)^p.
	\end{align*}
	The functions $\phi$ and $\phi^{(c_p)}$ are in $\mathrm{L}^1(\mu)$ and $\mathrm{L}^1(\nu)$ respectively, from the assumption $\ell_p(\mu, \nu) < +\infty$.
	By definition of the $c_p$-transform, we also have $\phi^{(c_p)}(y) - \phi(x) \geq c_p(x, y)$ for $(\mu \otimes \nu)$-every $(x, y) \in \H$ and integrating this inequality with respect to any $\pi' \in \Pi_{\leq}(\mu, \nu)$ yields
	\[
	\ell_p(\mu, \nu)^p \leq \int_\H \varphi^{(c_p)}(y) \diff \nu(y) - \int_\H \varphi(x) \diff \mu(x).
	\]
	Since we always have
	\[
	\ell_p(\mu, \nu)^p \leq \inf D(u, v) \text{, and } \inf D(u, v) \leq \int_\H \varphi^{(c_p)}(y) \diff \nu(y) - \int_\H \varphi(x) \diff \mu(x),
	\]
	we deduce that strong Kantorovich duality holds.
\end{proof}

A measurable $c_p$-concave map satisfying \cref{eq:strongKantorovichDuality} is called a $c_p$-Kantorovich potential.

 \subsection{Brenier's theorem in the sub-Lorentzian Heisenberg group}
 
 \label{subsection:brenierSLH}

We are now progressing toward the proof of Brenier's theorem, and we are going to work through some preparatory lemmas first.

\begin{lemma}
	\label{lemma:subdifferentialisagraph}
	Let $A_1$ and $A_2$ be non-empty subsets of $\H$ such that $A_1 \times A_2 \subseteq \H_{\ll}^2$, and $\phi : A_1 \to \R$ is a $c_p$-concave map relative to $(A_1, A_2)$. If $A_1$ is open and $\phi$ is differentiable almost everywhere, then $\mathcal{L}^3(P_1(\partial_{c_p} \phi \setminus \Gamma)) = 0$, where $\Gamma \subseteq \partial_{c_p}\phi$ is the measurable set given by
	\begin{equation}
		\label{eq:Gammagraph}
		\Gamma := \{ (q, T(q)) \mid q \in S \}, \text{  where } S := \{ q \in P_1(\partial_{c_p}\phi) \mid \phi \text{ is differentiable at } q \}
	\end{equation}
	and $T : S \to \H$ is the map
	\[
	T(q) := \exp_q\left( - \Diff_{q} \phi \bigg/ {\left(\sqrt{2H(q, \Diff_q \phi)}\right)^{\tfrac{p - 2}{p-1}}}\right).
	\]  
\end{lemma}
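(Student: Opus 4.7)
The plan is to show that whenever $\phi$ is differentiable at a point $q \in P_1(\partial_{c_p}\phi)$, the element $q' \in A_2$ with $(q, q') \in \partial_{c_p}\phi$ is uniquely determined and equal to $T(q)$. Combined with the assumption that $\phi$ is differentiable $\mathcal{L}^3$-almost everywhere in $A_1$, this forces the ``bad'' set $\partial_{c_p}\phi \setminus \Gamma$ to project into a Lebesgue-null subset of $A_1$.

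Concretely, fix $(q, q') \in \partial_{c_p}\phi$ with $q \in S$, and set
\[
\psi(z) := \phi^{(c_p)}(q') - \phi(z), \qquad z \in A_1.
\]
The second characterisation of $\partial_{c_p}\phi$ in \cref{def:subdifferential} rewrites as $\phi(z) + c_p(z, q') \leq \phi^{(c_p)}(q')$ with equality at $z = q$ (since $\phi^{(c_p)}(q') = \phi(q) + c_p(q,q')$). Equivalently, $\psi(z) \geq c_p(z, q') = \tau(z, q')^p/p$ on $A_1$, with equality at $z = q$. Because $A_1$ is open, it is an open neighbourhood of $q$; because $A_1 \times A_2 \subseteq \H_{\ll}^2$, we have $q' \in I^+(q)$; and because $\phi$ is differentiable at $q$, so is $\psi$, with $\Diff_q\psi = -\Diff_q\phi$.

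Thus the hypotheses of \cref{lemma:Geometric} are satisfied by $\psi$ at $(q, q')$, giving
\[
q' = \exp_q\!\left(\Diff_q\psi \bigg/ \left(\sqrt{2H(q, \Diff_q\psi)}\right)^{\tfrac{p-2}{p-1}}\right) = \exp_q\!\left(-\Diff_q\phi \bigg/ \left(\sqrt{2H(q, \Diff_q\phi)}\right)^{\tfrac{p-2}{p-1}}\right) = T(q),
\]
where the middle identity uses that $H(q, \cdot)$ is a quadratic form in the covector, hence even. This shows that for each $q \in S$ the fibre of $\partial_{c_p}\phi$ above $q$ is the singleton $\{T(q)\}$, so $\partial_{c_p}\phi \cap (S \times \H) \subseteq \Gamma$, and therefore
\[
P_1(\partial_{c_p}\phi \setminus \Gamma) \subseteq P_1(\partial_{c_p}\phi) \setminus S.
\]

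The right-hand side consists of points of $A_1$ at which $\phi$ fails to be differentiable, which has $\mathcal{L}^3$-measure zero by hypothesis. The argument is quite direct once \cref{lemma:Geometric} is invoked; the only technical point worth checking is measurability, namely that $S$ is measurable (as the domain of differentiability of the measurable function $\phi$) and that the map $q \mapsto (q, T(q))$ is measurable on $S$ (as the composition of the Borel map $q \mapsto (q, \Diff_q\phi)$ with the smooth Hamiltonian flow defining $\exp_q$), so that $\Gamma$ and $P_1(\partial_{c_p}\phi \setminus \Gamma)$ are well-defined measurable sets.
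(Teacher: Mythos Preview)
Your proof is correct and follows essentially the same approach as the paper: define $\psi(z) = \phi^{(c_p)}(q') - \phi(z)$, check that it satisfies the hypotheses of \cref{lemma:Geometric}, deduce $q' = T(q)$ for every $q \in S$, and conclude that $P_1(\partial_{c_p}\phi \setminus \Gamma) \subseteq P_1(\partial_{c_p}\phi) \setminus S$ has Lebesgue measure zero. Your remark that $H(q,\cdot)$ is even, so $H(q,-\Diff_q\phi) = H(q,\Diff_q\phi)$, is a helpful clarification the paper leaves implicit.
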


\begin{proof}
	We have
	\begin{align*}
		P_1(\partial_{c_p}\phi) \setminus S ={}& \{ q \in P_1(\partial_{c_p}\phi) \mid \phi \text{ is not differentiable at } q \} \\
		\subseteq{}& \{ q \in A_1 \mid \phi \text{ is not differentiable at } q \},
	\end{align*}
	and therefore $\mathcal{L}^3(P_1(\partial_{c_p}\phi) \setminus S) = 0$. If $q \in S$, then $\phi$ is differentiable at $q$ and there exists $q' \in A_2 \subseteq I^+(q)$ such that $(q, q') \in \partial_{c_p} \phi$. In particular, we have $\varphi^{(c_p)}(q') - \varphi(q) = c_p(q, q')$ and
\begin{equation}
\label{eq:keyinequality}
\varphi^{(c_p)}(q') - \varphi(x) \geq c_p(x, q') = \frac{1}{p} \tau(x, q')^p, \quad \text{for all $x \in A_1$.}
\end{equation}
    We can then apply      \cref{lemma:Geometric} to the function $\psi(x) := \phi^{(c_p)}(q') - \phi(x)$, and we obtain \cref{eq:q'=T(q)}, which we write as $q' = T(q)$. Then $\Gamma = \partial_{c_p} \phi \cap P_1^{-1}(S)$ and thus
    \begin{align*}
    	P_1(\partial_{c_p} \phi \setminus \Gamma) ={}& P_1(\partial_{c_p} \phi \setminus P_1^{-1}(S)) = P_1(\partial_{c_p}\phi) \setminus S
    \end{align*}
    which implies $\mathcal{L}^3(P_1(\partial_{c_p} \phi \setminus \Gamma)) = 0$.
\end{proof}

Similarly, we can prove using \cref{lemma:Geometric2}.

\begin{lemma}
	\label{lemma:subdifferentialisagraph2}
	Let $A_1$ and $A_2$ be non-empty subsets of $\H$ such that $A_1 \times A_2 \subseteq \H_{\ll}^2$, and $\phi : A_1 \to \R$ is a $c_p$-concave map relative to $(A_1, A_2)$. If $A_2$ is open and $\phi^{(c_p)}$ is differentiable almost everywhere, then $\mathcal{L}^3(P_2(\partial_{c_p} \phi \setminus \Gamma)) = 0$, where $\Gamma \subseteq \partial_{c_p}\phi$ is the measurable set given by
	\begin{equation}
		\label{eq:Gammagraph2}
		\Gamma := \{ (T(q), q) \mid q \in S \}, \text{  where } S := \{ q \in P_2(\partial_{c_p}\phi) \mid \phi^{(c_p)} \text{ is differentiable at } q \}
	\end{equation}
	and $T : S \to \H$ is the map
	\[
	T(q) = \exp_q\left(\Diff_{q} \phi^{(c_p)} \bigg/ {\left(\sqrt{2H(q, \Diff_q \phi^{(c_p)})}\right)^{\tfrac{p - 2}{p-1}}}\right).
	\]  
\end{lemma}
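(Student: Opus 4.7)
The plan is to mirror the proof of \cref{lemma:subdifferentialisagraph} with the roles of $P_1, P_2$ (and of $\phi, \phi^{(c_p)}$) exchanged, substituting \cref{lemma:Geometric2} for \cref{lemma:Geometric} in the key step. The overall shape of the argument is: first show that $\partial_{c_p}\phi \cap P_2^{-1}(S) = \Gamma$, so that $P_2(\partial_{c_p}\phi \setminus \Gamma) = P_2(\partial_{c_p}\phi) \setminus S$; then observe that this last set is contained in $\{q \in A_2 \mid \phi^{(c_p)} \text{ is not differentiable at } q\}$, which has Lebesgue measure zero since $A_2$ is assumed open and $\phi^{(c_p)}$ is differentiable almost everywhere on it.

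The heart of the proof is to establish that for every $q \in S$, the unique $q' \in A_1$ satisfying $(q', q) \in \partial_{c_p}\phi$ is given by $q' = T(q)$. Fix such a pair $(q', q)$. The ambient assumption $A_1 \times A_2 \subseteq \H_{\ll}^2$ ensures $q \in I^+(q')$, which is needed to invoke \cref{lemma:Geometric2}. Define the auxiliary function $\psi : A_2 \to \R$ by
\[
\psi(y) := \phi^{(c_p)}(y) - \phi(q').
\]
Since $(q', q) \in \partial_{c_p}\phi$, we have $\psi(q) = c_p(q', q) = \tau(q', q)^p/p$; the definition of the $c_p$-transform yields $\psi(y) \geq c_p(q', y) = \tau(q', y)^p/p$ for every $y \in A_2$; and $\psi$ is differentiable at $q$ with $\Diff_q \psi = \Diff_q \phi^{(c_p)}$. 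With $A_2$ serving as the required open neighbourhood of $q$, these are exactly the hypotheses of \cref{lemma:Geometric2} (applied with the roles of source and target exchanged relative to its statement).

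Applying \cref{lemma:Geometric2} then yields
\[
q' = \exp_q\!\left(\Diff_q \phi^{(c_p)} \bigg/ \left(\sqrt{2H(q, \Diff_q \phi^{(c_p)})}\right)^{\tfrac{p-2}{p-1}}\right) = T(q),
\]
so every $(q_1, q_2) \in \partial_{c_p}\phi$ with $q_2 \in S$ satisfies $q_1 = T(q_2)$, giving $\partial_{c_p}\phi \cap P_2^{-1}(S) \subseteq \Gamma$. The reverse inclusion is immediate from the definition of $\Gamma$ together with the fact that every $q \in S$ admits at least one partner $q' \in A_1$ (since $q \in P_2(\partial_{c_p}\phi)$), which must then coincide with $T(q)$ by the argument above. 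The measure-zero conclusion follows as indicated.

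The main obstacle is bookkeeping. \cref{lemma:Geometric2} is stated with the chronological order $q \ll q'$ (source before target), whereas the point in our setting at which the $c_p$-transform is differentiable is the \emph{target} $q$, not the source. One must be careful to swap the roles of $q$ and $q'$ consistently when applying the lemma and to verify that the auxiliary function $\psi$ has its differentiable point at the correct endpoint; the absence of a minus sign in the formula defining $T$ (unlike the one in \cref{lemma:subdifferentialisagraph}) is precisely a consequence of this inversion, as $\psi$ here varies through $\phi^{(c_p)}$ rather than through $-\phi$.
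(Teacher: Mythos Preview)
Your proposal is correct and follows exactly the approach the paper indicates: the paper itself does not spell out a proof but merely states that \cref{lemma:subdifferentialisagraph2} is proved similarly to \cref{lemma:subdifferentialisagraph} using \cref{lemma:Geometric2} in place of \cref{lemma:Geometric}, and your argument is precisely that mirror proof with the roles of $P_1,P_2$ and $\phi,\phi^{(c_p)}$ swapped. Your bookkeeping remark about the exchange of source and target when invoking \cref{lemma:Geometric2} is apt and correctly accounts for the absence of the minus sign in the formula for $T$.
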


\begin{remark}
	\label{remark:subdifferentialisagraph}
	We can replace differentiable almost everywhere with $\mu$-almost everywhere and then $\mu(P_2(\partial_{c_p} \phi \setminus \Gamma)) = 0$ with the obvious modifications.
\end{remark}

The next lemmas addresses further regularity properties of $c_p$-concave maps and of their $c_p$-transform. The notions of functions of horizontal bounded variation is taken from \cite{Ambrosio2003}.

\begin{lemma}
	\label{lemma:locallysemiconcaveBVH}
    Let $A_1$ and $A_2$ be non-empty subsets of $\H$ such that $A_1 \times A_2 \subseteq \H_{\ll}^2$, and let $\phi : A_1 \to \R$ be a $c_p$-concave map relative to $(A_1, A_2)$.
    \begin{theoenum}
		\item If $A_2$ is bounded, then $\phi$ is locally semiconcave and belongs to $\mathrm{BV}_{\mathrm{loc}, H}^2(A_1)$.
		\item If $A_1$ is bounded, then $\phi^{(c_p)}$ is locally semiconvex and belongs to $\mathrm{BV}_{\mathrm{loc}, H}^2(A_2)$.
    \end{theoenum}
\end{lemma}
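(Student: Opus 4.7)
The plan is to write $\phi$ (respectively $\phi^{(c_p)}$) as the infimum (respectively supremum) of a uniformly $C^2$ family of smooth test functions, and to deduce local semiconcavity (respectively semiconvexity) from the standard paraboloid characterisation. The $\mathrm{BV}^2_{\mathrm{loc}, H}$ statement is then inherited from the local semiconcavity via the general theory of \cite{Ambrosio2003}, which shows that locally semiconcave/semiconvex functions on Carnot groups are of class $\mathrm{BV}^2_H$ locally; so I would reduce everything to the regularity of $\phi$ and $\phi^{(c_p)}$.

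For \textbf{(i)}, first I would invoke \cref{theorem:propertytau} to note that $c_p = \tau^p/p$ is real-analytic on $\H_{\ll}^2$, hence its partial derivatives in $x$ up to order two are continuous on every compact subset of $\H_{\ll}^2$. Writing $\psi_y(x) := \phi^{(c_p)}(y) - c_p(x,y)$, the identity $\phi = \inf_{y \in A_2} \psi_y$ from \cref{lemma:concaveiff} realises $\phi$ as an infimum of a family that is $C^2$ in $x$ on a neighbourhood of any interior point of $A_1$. Fix $x_0$ in the interior of $A_1$ (extending via \cref{lemma:phiRphicpR} to the open enlargement $A_1^R$ if needed, which preserves $c_p$-conjugacy) and a small compact neighbourhood $U$ of $x_0$. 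For any minimising sequence $y_n \in A_2$ with $\psi_{y_n}(x_0) \to \phi(x_0)$, compactness of $\overline{A_2}$ yields a cluster point $y_\infty$. The key technical point is to argue that $(x_0, y_\infty) \in \H^2_{\ll}$, so that the pairs $(x, y_n)$ with $x \in U$ eventually lie in a fixed compact subset of $\H^2_{\ll}$: this follows by combining the boundedness from above and below of $\phi^{(c_p)}$ established in the lemmas just before (valid since $A_2$ is bounded) with the fact that $c_p(x_0, y) \to 0$ forces $\psi_y(x_0) \to \phi^{(c_p)}(y)$, which together with the reverse triangle inequality for $\tau$ prevents accumulation of near-minimisers on the light cone from $x_0$.

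Once this is in place, the Hessians $D^2_x \psi_{y_n}$ are uniformly bounded by some $C > 0$ on $U$ for large $n$, giving paraboloid estimates
\[
\psi_{y_n}(x) \leq \psi_{y_n}(x_0) + \langle p_n, x - x_0 \rangle + \tfrac{C}{2}|x - x_0|^2
\]
for $x \in U$, with $p_n := \nabla_x \psi_{y_n}(x_0)$ uniformly bounded. Passing to a subsequence with $p_n \to p$ and using $\phi \leq \psi_{y_n}$ pointwise, together with $\psi_{y_n}(x_0) \to \phi(x_0)$, yields
\[
\phi(x) \leq \phi(x_0) + \langle p, x - x_0 \rangle + \tfrac{C}{2}|x - x_0|^2,
\]
i.e. $\phi$ admits a supporting paraboloid at $x_0$ with a constant $C$ that depends only on $U$, which is the desired local semiconcavity. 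Part \textbf{(ii)} is entirely symmetric: using $\phi^{(c_p)}(y) = \sup_{x \in A_1}(\phi(x) + c_p(x,y))$, the supremum of $C^2$ functions $y \mapsto \phi(x) + c_p(x,y)$ with locally uniform Hessian bounds (now guaranteed automatically, since $A_1$ is bounded and bounds near-maximisers into a compact subset of $\H^2_{\ll}$ exactly as in (i), with the roles of $\phi$ and $\phi^{(c_p)}$ interchanged via the previous regularity lemmas) yields local semiconvexity of $\phi^{(c_p)}$ on the interior of $A_2$. The main obstacle in the whole argument is the near-minimiser compactness inside $\H^2_{\ll}$; everything else is a standard reduction to \cite{Cannarsa2004} and \cite{Ambrosio2003}.
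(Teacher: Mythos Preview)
Your overall strategy coincides with the paper's: write $\phi$ as the infimum $\inf_{y\in A_2}(\phi^{(c_p)}(y)-c_p(x,y))$ via \cref{lemma:concaveiff}, use the smoothness of $c_p$ on $\H^2_\ll$ from \cref{theorem:propertytau} to get uniform second-order bounds, conclude local semiconcavity, and then invoke \cite{Ambrosio2003} for the $\mathrm{BV}^2_{\mathrm{loc},H}$ statement. The paper does exactly this, citing \cite[Proposition~2.1.5, Corollary~2.1.6]{Cannarsa2004} for the passage ``infimum of a uniformly semiconvex family is semiconcave'' and \cite[Theorem~4.3]{Ambrosio2003} for the $\mathrm{BV}^2_H$ conclusion.

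Where you diverge is in what you call the ``main obstacle''. The paper does not pass to minimising sequences at all: it simply asserts that for any bounded $\Omega\subseteq A_1$ the second horizontal derivatives $XXc_p$, $XYc_p$, $YYc_p$ are uniformly bounded on $\Omega\times A_2$, and then the infimum argument is immediate --- no paraboloid construction point-by-point, no cluster points. Your detour through near-minimisers is therefore unnecessary for the paper's argument, and your justification for why such near-minimisers cannot accumulate on the light cone is not convincing as written: the claim that boundedness of $\phi^{(c_p)}$ together with the reverse triangle inequality for $\tau$ ``prevents accumulation'' is asserted rather than proved, and it is not clear how the reverse triangle inequality enters. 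If $(x_0,y_n)$ approaches the light cone, $c_p(x_0,y_n)\to 0$ and $\psi_{y_n}(x_0)\to\liminf\phi^{(c_p)}(y_n)$, which is finite; nothing you have said rules out this equalling $\phi(x_0)$. In short: drop the minimising-sequence machinery and argue directly, as the paper does, that the uniform Hessian bound holds on $\Omega\times A_2$; then the standard Cannarsa--Sinestrari result applies in one line.
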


\begin{proof}
	We only provide the proof for (i) as the proof for (ii) is similar. Since $A_2$ is bounded, the family of functions $x \in A_1 \mapsto c_p(x, y)$ is locally semiconvex with semiconvexity constants uniformly bounded in $y \in A_2$, thanks to \cref{theorem:propertytau}. Because we can express $\phi$ as the infimum in \cref{lemma:concaveiff}, the argument of \cite[Proposition 2.1.5, Corollary 2.1.6]{Cannarsa2004} establishes that $\phi$ is locally semiconcave too. The fact that $\phi \in \mathrm{BV}_{\mathrm{loc}, H}^2(A_1)$ follows from the same proof as in \cite[Theorem 4.3]{Ambrosio2003}. Indeed, by \cref{theorem:propertytau}, we know that $\tau$ is analytic on $A_1 \times A_2 \subseteq \H^2_\ll$ and thus
	\[
	\sup_{\substack{x \in \Omega \\ y \in A_2}} \left\{ |XXc_p(x, y)|, |XYc_p(x, y)|, |YYc_p(x, y)| \right\} < +\infty,
	\]
	for every bounded subset $\Omega \subseteq A_1$.
\end{proof}

Brenier's theorem in the sub-Lorentzian Heisenberg group—specifically, the existence and uniqueness of the optimal transport plan between probability measures—finally follows.

\begin{theorem}
	\label{thm:Brenier}
    Let $\mu, \nu \in \mathcal{P}(\H)$ with $\mu \ll \mathcal{L}^3$ and $\mathrm{supp}(\mu) \times \mathrm{supp}(\nu) \subseteq \H^2_\ll$. Assume that $\mathrm{supp}(\nu)$ is compact and that the condition \cref{remark:condition(ii)} is satisfied. Then, there exists a unique optimal transport plan $\pi$ from $\mu$ to $\nu$, i.e., a unique solution to \cref{eq:LorentzKantorovich}. Furthermore, $\pi$ is induced by a unique (forward) transport map $T$, which is the unique solution to \cref{eq:LorentzMonge} and is given by 
\begin{equation}
	\label{eq:brenierexponential}
	T(q) = \exp_q\left({-\Diff_{q} \phi} \bigg/ {\left(\sqrt{2H(q, \Diff_q \phi)}\right)^{\tfrac{p - 2}{p-1}}}\right),
\end{equation}
where $\phi : A_1 \to \R$ is any map that is $c_p$-concave relative to a pair of sets $(A_1, A_2)$, with $\mu(A_1) = \nu(A_2) = 1$, $A_1$ being open, $A_2$ bounded, $\mathrm{supp}(\mu) \subseteq A_1$, and $A_1 \times A_2 \subseteq \H_\ll^2$.
\end{theorem}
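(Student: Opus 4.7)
The plan is to chain together the machinery assembled in the preceding subsections. The hypotheses of \cref{prop:optimalexistence} are met: $\mathrm{supp}(\mu)\times\mathrm{supp}(\nu) \subseteq \H^2_\ll \subseteq \H^2_\le$ together with condition \eqref{remark:condition(ii)} provides an integrable upper bound $a\oplus b$ for $c_p$ on the supports, and since $\H^2_\ll$ is open we can produce a causal coupling (e.g.\ $\mu\otimes\nu$ itself is concentrated on $\H^2_\ll$), so $\Pi_\le(\mu,\nu)\neq\varnothing$. This yields an optimal transport plan $\pi$ with $\ell_p(\mu,\nu) < +\infty$. By \cref{prop:CMandOptimality}(i), $\pi$ is $c_p$-cyclically monotone, and \cref{theorem:cCMiff} produces a measurable $c_p$-concave map $\widetilde\phi$, relative to $(P_1(\mathrm{supp}\,\pi), P_2(\mathrm{supp}\,\pi))$, with $\mathrm{supp}(\pi) \subseteq \partial_{c_p}\widetilde\phi$.

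Next, since $\H^2_\ll$ is open and $\mathrm{supp}(\nu)$ is compact, there exists a bounded open set $A_2 \supseteq \mathrm{supp}(\nu)$ and an open set $A_1 \supseteq \mathrm{supp}(\mu)$ with $A_1\times A_2 \subseteq \H^2_\ll$. Using \cref{lemma:phiRphicpR} (applied to $\widetilde\phi$ extended first to a bounded-domain $c_p$-conjugate pair and then enlarged via $\widetilde\phi_R$), one obtains a $c_p$-concave map $\phi : A_1 \to \R$ relative to $(A_1, A_2)$ whose $c_p$-subdifferential still contains $\mathrm{supp}(\pi)$. Modifying $A_1, A_2$ to full-measure Borel subsets where needed, we may assume $\mu(A_1)=\nu(A_2)=1$. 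By \cref{lemma:locallysemiconcaveBVH}(i), $\phi$ is locally semiconcave on $A_1$; reading this in exponential coordinates it is classical semiconcavity on an open subset of $\R^3$, so Alexandrov's theorem gives differentiability $\mathcal{L}^3$-a.e., and hence $\mu$-a.e.\ by $\mu\ll\mathcal{L}^3$.

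The explicit formula is now immediate: \cref{lemma:subdifferentialisagraph} (in the version of \cref{remark:subdifferentialisagraph}) shows that $\partial_{c_p}\phi$ agrees $\mu$-almost everywhere in its first projection with the graph $\Gamma = \{(q,T(q)) \mid q \in S\}$, where $T$ is the map given by \eqref{eq:brenierexponential}. Since $\pi$ is concentrated on $\partial_{c_p}\phi$ and has first marginal $\mu$, it is concentrated on $\Gamma$, so $\pi = (\mathrm{Id}\times T)_\sharp \mu$. This forces $T_\sharp\mu = \nu$ and makes $T$ a forward transport map, optimal because $\pi$ is. Uniqueness follows by the same graph argument applied to any other optimal plan $\pi'$: by \cref{prop:CMandOptimality}(i) it is also $c_p$-cyclically monotone, and the duality identity of \cref{theorem:strongkantorovich} forces $\mathrm{supp}(\pi') \subseteq \partial_{c_p}\phi$ for the same $\phi$; the graph structure of \cref{lemma:subdifferentialisagraph} combined with absolute continuity of the first marginal then gives $\pi' = (\mathrm{Id}\times T)_\sharp \mu = \pi$, and uniqueness of $T$ as a transport map follows as well.

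The hard part, as I see it, is not the existence/cyclical-monotonicity step, which is softly formal, but rather producing a $c_p$-concave potential $\phi$ on an \emph{open} set $A_1 \supseteq \mathrm{supp}(\mu)$ paired with a \emph{bounded} $A_2 \supseteq \mathrm{supp}(\nu)$, because the regularity machinery of \cref{lemma:locallysemiconcaveBVH} and the geometric \cref{lemma:Geometric} require precisely this setup to apply. This is exactly why the $R$-neighbourhood enlargements $\phi_R, \phi^{(c_p)}_R$ of \cref{lemma:phiRphicpR} were prepared, and invoking them at the correct moment is the one genuinely delicate step; everything else is book-keeping assembled from the earlier lemmas.
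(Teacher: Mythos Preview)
Your proposal is correct and follows essentially the same route as the paper: existence of an optimal plan via \cref{prop:optimalexistence}, cyclical monotonicity via \cref{prop:CMandOptimality}(i), a potential via \cref{theorem:cCMiff}, enlargement of the domain via \cref{lemma:phiRphicpR}, semiconcavity and a.e.\ differentiability via \cref{lemma:locallysemiconcaveBVH}, and finally the graph structure of $\partial_{c_p}\phi$ via \cref{lemma:subdifferentialisagraph}.

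Two small points of divergence are worth flagging. First, for uniqueness the paper uses the convex-combination trick: if $\pi$ and $\pi'$ are both optimal, so is $\tfrac{1}{2}(\pi+\pi')$, and since every optimal plan is induced by a map this forces $T=T'$ $\mu$-a.e. Your alternative---using the duality identity from \cref{theorem:strongkantorovich} to show that any optimal $\pi'$ is concentrated on $\partial_{c_p}\phi$ for the \emph{same} $\phi$---is also valid and standard, though you should say explicitly that $\int c_p\,\diff\pi' = \ell_p^p = \int(\phi^{(c_p)}-\phi)\,\diff\pi'$ combined with the pointwise inequality $\phi^{(c_p)}(y)-\phi(x)\ge c_p(x,y)$ forces equality $\pi'$-a.e. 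Second, you do not need (and cannot directly arrange, since $\mathrm{supp}(\mu)$ is not assumed bounded) to enlarge $A_2$ to an open set: the paper only invokes \cref{lemma:phiRphicpR}(i) to enlarge $A_1$ to the open $A_1^R$, keeping $A_2 = P_2(\mathrm{supp}\,\pi)\subseteq\mathrm{supp}(\nu)$, which is already bounded by the compactness hypothesis on $\mathrm{supp}(\nu)$.
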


\begin{proof}
     Since $\mathrm{supp}(\mu) \times \mathrm{supp}(\nu) \subseteq \H^2_\ll$, we have $\mu \otimes \nu \in \Pi_{\ll}(\mu, \nu) \subseteq \Pi_{\leq}(\mu, \nu)$ and thus the conditions \cref{remark:condition(i)} and \cref{remark:condition(ii)} are both satisfied. Consequently, we know by \cref{prop:optimalexistence} that $\ell_p(\mu, \nu) < +\infty$ and that there is an optimal transport plan $\pi \in \Pi_{\leq}(\mu, \nu)$ from $\mu$ to $\nu$. Any such plan must be an element of $\Pi_{\ll}(\mu, \nu)$ because $\mathrm{supp}(\pi) \subseteq \mathrm{supp}(\mu) \times \mathrm{supp}(\nu)$.
     
     Let $\pi \in \Pi_{\leq}(\mu, \nu)$ denote any optimal plan from $\mu$ to $\nu$, which, by \cref{prop:CMandOptimality} (i), must necessarily be $c_p$-cyclically monotone which means that its support $\mathrm{supp}(\pi)$ is $c_p$-cyclically monotone. By virtue of \cref{theorem:cCMiff}, there exists a measurable $c_p$-concave map $\phi$ relative to $(A_1, A_2) := (P_1(\mathrm{supp}(\pi)), P_2(\mathrm{supp}(\pi)))$ such that $\mu(A_1) = \nu(A_2) = 1$ and $\mathrm{supp}(\pi) \subseteq \partial_{\tau^p} \phi$. Consequently, the conclusions of \cref{theorem:strongkantorovich}, including strong Kantorovich duality \cref{eq:strongKantorovichDuality}, are valid.
We have $A_1 \subseteq \mathrm{supp}(\mu)$ and $A_2 \subseteq \mathrm{supp}(\nu)$; hence, $A_1 \times A_2 \subseteq \H_{\ll}^2$, and $A_2$ is bounded due to the assumption on $\mathrm{supp}(\nu)$. Using \cref{lemma:phiRphicpR} (i), we obtain a pair of $c_p$-concave maps $(\phi_R, \phi^{(c_p)})$ relative to $(A_1^R, A_2)$, following the notation established in \cref{eq:AR}, \cref{eq:phir}, and \cref{eq:phitaupr}. For sufficiently small $R > 0$, we still have $A_1^R \times A_2 \subseteq \H_{\ll}^2$, and $A_1^R$ is a compact neighbourhood of $A_1$ and thus of $\mathrm{supp}(\mu)$ too. This shows that there exists at least one $c_p$-concave map satisfying all the properties listed just after \cref{eq:brenierexponential}.
	
	Now, with a slight abuse of notation, let $\phi : A_1 \to \R$ be any map that is $c_p$-concave relative to a pair of sets $(A_1, A_2)$ and that satisfies those properties. By \cref{lemma:locallysemiconcaveBVH}, the map $\phi$ is locally semiconcave. In particular, \cite[Theorem 2.1.7]{Cannarsa2004} implies that $\phi$ is locally Lipschitz and therefore differentiable almost everywhere in the interior of $A_1$.
 	
We now apply \cref{lemma:subdifferentialisagraph} to the pair $(\phi, \phi^{(c_p)})$, obtaining the set $\Gamma$ as described in \cref{eq:Gammagraph}. Since it is assumed that $\mu \ll \mathcal{L}^3$, we conclude that $\mu(P_1(\partial_{\tau^p} \phi \setminus \Gamma)) = 0$, i.e., for $\mu$-almost every $q \in A_1$, there exists a unique $q' \in A_2$ such that $(q, q') \in \partial_{c_p} \phi$. The first marginal of $\pi$ is $\mu$, so we have 
\begin{equation}
\label{eq:keystepinbrenier}
\mu(P_1(\partial_{c_p} \phi \setminus \Gamma)) = \pi(P_1^{-1} \circ P_1(\partial_{c_p} \phi \setminus \Gamma)) \geq \pi(\partial_{c_p} \phi \setminus \Gamma).
\end{equation}
In particular, $\pi(\Gamma) = 1$, and $\pi = (\mathrm{Id} \times T)_\sharp \mu$ for the function $T$ given in \cref{eq:brenierexponential}.

    It remains to show the uniqueness part of the statement, which is standard. Suppose that $\pi$ and $\pi'$ are optimal transport plan from $\mu$ to $\nu$, then from the discussion above there exist maps $T$ and $T'$ such that $\pi = (\mathrm{Id} \times T)_\sharp \mu$ and $\pi' = (\mathrm{Id} \times T')_\sharp \mu$. The plan $\pi'' := \tfrac{1}{2}(\pi + \pi')$ has cost $K(\pi'') = K(\pi) = K(\pi')$ and thus is also optimal. Therefore, we must have $\pi'' = (\mathrm{Id} \times T'')_\sharp \mu$ for some map $T''$ too and thus $T(q) = T'(q)$ for $\mu$-almost every $q \in \H$. 
\end{proof}

A converse to \cref{thm:Brenier} can also be proven.

\begin{theorem}
\label{thm:conversebrenier}
Let $\mu \in \mathcal{P}(\H)$, and let $A_1$ and $A_2$ be non-empty subsets of $\H$ such that $A_1 \times A_2 \subseteq \H_{\ll}^2$. Suppose $\phi : A_1 \to \R$ is a $c_p$-concave map relative to $(A_1, A_2)$, where $A_1$ is open and $\phi$ is differentiable $\mu$-almost everywhere. 
If $\pi = (\mathrm{Id} \times T)_\sharp \mu$, where $T$ is the map given by \cref{lemma:subdifferentialisagraph}, then $\pi$ is an optimal transport plan between $\mu$ and $\nu := T_\sharp \mu$, provided the condition \cref{remark:condition(ii)} is satisfied for the pair $(\mu, \nu)$.
\end{theorem}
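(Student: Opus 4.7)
The plan is to show that the candidate plan $\pi = (\mathrm{Id} \times T)_\sharp \mu$ is simultaneously $c_p$-cyclically monotone and concentrated on $\H_\ll^2$, and then invoke \cref{prop:CMandOptimality}(ii). The map $T$ was engineered precisely so that $(q, T(q))$ lies in the $c_p$-subdifferential of $\phi$, so the optimality should fall out essentially for free from the tools already built.

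First, I would apply \cref{lemma:subdifferentialisagraph} in the $\mu$-almost everywhere variant indicated by \cref{remark:subdifferentialisagraph}: since $A_1$ is open and $\phi$ is differentiable $\mu$-a.e., the measurable set
\[
\Gamma = \{(q, T(q)) : q \in S\}, \qquad S = \{q \in P_1(\partial_{c_p}\phi) : \phi \text{ is differentiable at } q\},
\]
is contained in $\partial_{c_p}\phi$ and satisfies $\mu(P_1(\partial_{c_p}\phi) \setminus S) = 0$, so that $T$ is well defined $\mu$-a.e.\ and $\pi(\Gamma)=1$. In particular $\pi$ is concentrated on $\partial_{c_p}\phi$.

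Second, I would note that $\partial_{c_p}\phi$ is automatically $c_p$-cyclically monotone: this is exactly the easy direction of \cref{theorem:cCMiff}, which is the short chain of inequalities at the end of its proof using only the definition of the $c_p$-transform. Consequently $\pi$ is itself $c_p$-cyclically monotone. Moreover $\Gamma \subseteq A_1 \times A_2 \subseteq \H_\ll^2$, hence $\pi(\H_\ll^2)=1$, and in particular $\pi \in \Pi_\ll(\mu,\nu) \subseteq \Pi_\le(\mu,\nu)$ so that $\Pi_\le(\mu,\nu) \neq \varnothing$. Together with the assumption that \cref{remark:condition(ii)} holds for $(\mu,\nu)$—which yields the required integrable dominating function $a \oplus b \in \L^1(\mu \otimes \nu)$ with $\tau^p \leq a \oplus b$ on $\mathrm{supp}(\mu) \times \mathrm{supp}(\nu)$ via the reverse triangle inequality—every hypothesis of \cref{prop:CMandOptimality}(ii) is verified, and that proposition immediately gives that $\pi$ is an optimal transport plan from $\mu$ to $\nu$.

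The argument is essentially a concatenation of the preparatory lemmas, and I do not expect a substantive obstacle. The most delicate point—really the only one worth checking carefully—is confirming that the hypothesis ``$\phi$ differentiable $\mu$-a.e.''\ suffices to invoke \cref{lemma:subdifferentialisagraph} so that $\pi$ is genuinely concentrated on $\partial_{c_p}\phi$; this is precisely what \cref{remark:subdifferentialisagraph} records. Once that is in place, the measurability of $T$ (via the real-analytic exponential map composed with a Borel-measurable differential) and the well-definedness of $\nu = T_\sharp \mu$ are routine.
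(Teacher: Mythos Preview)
Your proposal is correct and follows essentially the same route as the paper: use \cref{remark:subdifferentialisagraph} to get $\pi$ concentrated on $\Gamma \subseteq \partial_{c_p}\phi$, invoke the easy direction of \cref{theorem:cCMiff} for $c_p$-cyclical monotonicity, observe $\Gamma \subseteq A_1 \times A_2 \subseteq \H_\ll^2$ so that $\pi(\H_\ll^2)=1$, and conclude via \cref{prop:CMandOptimality}(ii) using condition \cref{remark:condition(ii)}. The paper additionally notes $\mathrm{supp}(\mu)\times\mathrm{supp}(\nu)\subseteq \H_\ll^2$ to record that \cref{remark:condition(i)} holds, but otherwise the arguments coincide.
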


\begin{proof}
From \cref{remark:subdifferentialisagraph}, we know that $\mu(P_1(\partial_{c_p} \phi \setminus \Gamma)) = 0$ and that the set $\Gamma$ given in \cref{eq:Gammagraph}. satisfies $\Gamma \subseteq \partial_{c_p}\phi$. The plan $\pi := (\mathrm{Id} \times T)_\sharp \mu$ has first marginal $\mu$, and by reasoning as in \cref{eq:keystepinbrenier}, this implies $\pi(\Gamma) = 1$. Since $\Gamma \subseteq \partial_{c_p}\phi$, it follows that $\pi(\partial_{c_p}\phi) = 1$, establishing that $\pi$ is $c_p$-cyclically monotone thanks to \cref{theorem:cCMiff}. Furthermore, $\mathrm{supp}(\mu) \subseteq A_1$ and $\mathrm{supp}(\nu) \subseteq A_2$, so $\mathrm{supp}(\mu) \times \mathrm{supp}(\nu) \subseteq \H_\ll^2$. Both conditions \cref{remark:condition(i)} and \cref{remark:condition(ii)} are satisfied, allowing us to apply \cref{prop:CMandOptimality} (ii) and conclude that $\pi$ is optimal.
\end{proof}

Analogous statements can be established for the backward Lorentz-Monge transportation problem \cref{eq:LorentzMonge-}. The proofs are entirely similar, with the main differences being the uses of \cref{lemma:phiRphicpR} (i) and \cref{lemma:subdifferentialisagraph}, which are replaced by \cref{lemma:phiRphicpR} (ii) and \cref{lemma:subdifferentialisagraph2}, respectively. We state these results here for the sake of completeness.

\begin{theorem}
	\label{thm:Brenier2}
    Let $\mu, \nu \in \mathcal{P}(\H)$ with $\nu \ll \mathcal{L}^3$ and $\mathrm{supp}(\mu) \times \mathrm{supp}(\nu) \subseteq \H^2_\ll$. Assume that $\mathrm{supp}(\mu)$ is compact and that the condition \cref{remark:condition(ii)} is satisfied. Then, there exists a unique optimal transport plan $\pi$ from $\mu$ to $\nu$, i.e., a unique solution to \cref{eq:LorentzKantorovich}. Furthermore, $\pi$ is induced by a unique (backward) transport map $T$, which is the unique solution to \cref{eq:LorentzMonge-} and is given by 
\begin{equation}
\label{eq:brenierexponential2}
	T(q) = \exp_q\left(\Diff_{q} \phi^{(c_p)} \bigg/ {\left(\sqrt{2H(q, \Diff_q \phi^{(c_p)})}\right)^{\tfrac{p - 2}{p-1}}}\right),
\end{equation}
where $\phi : A_1 \to \R$ is any map that is $c_p$-concave relative to a pair of sets $(A_1, A_2)$, with $\mu(A_1) = \nu(A_2) = 1$, $A_2$ being open, $A_1$ bounded, $\mathrm{supp}(\nu) \subseteq A_2$, and $A_1 \times A_2 \subseteq \H_\ll^2$.
\end{theorem}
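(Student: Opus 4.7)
The plan is to mirror the proof of \cref{thm:Brenier} with the roles of $\mu$ and $\nu$, as well as those of $\phi$ and $\phi^{(c_p)}$, interchanged throughout, relying on the ``dual'' versions of the preparatory lemmas (namely \cref{lemma:phiRphicpR}~(ii), \cref{lemma:locallysemiconcaveBVH}~(ii), and \cref{lemma:subdifferentialisagraph2}) that have been tailored precisely for this purpose.

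First I would establish the existence of an optimal plan. Since $\mathrm{supp}(\mu) \times \mathrm{supp}(\nu) \subseteq \H^2_\ll$, the product measure $\mu \otimes \nu$ lies in $\Pi_\ll(\mu, \nu) \subseteq \Pi_\leq(\mu, \nu)$, so causal couplings exist; together with the integrability condition \cref{remark:condition(ii)}, \cref{prop:optimalexistence} yields an optimal plan $\pi$ with $\ell_p(\mu, \nu) < \infty$. Any such $\pi$ is concentrated on $\H_\ll^2$ because $\mathrm{supp}(\pi) \subseteq \mathrm{supp}(\mu) \times \mathrm{supp}(\nu)$. By \cref{prop:CMandOptimality}~(i), such a $\pi$ is $c_p$-cyclically monotone, and \cref{theorem:cCMiff} then provides a measurable $c_p$-concave map $\phi$ relative to $(P_1(\mathrm{supp}(\pi)), P_2(\mathrm{supp}(\pi)))$ with $\mathrm{supp}(\pi) \subseteq \partial_{c_p}\phi$. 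To upgrade to a pair of sets with $A_2$ open (as required by the statement), I would invoke \cref{lemma:phiRphicpR}~(ii): the set $A_1 := P_1(\mathrm{supp}(\pi))$ is bounded since $\mathrm{supp}(\mu)$ is compact, so for $R > 0$ sufficiently small the thickened set $A_2^R$ is an open neighbourhood of $\mathrm{supp}(\nu)$ still satisfying $A_1 \times A_2^R \subseteq \H_\ll^2$ (by openness of the chronological relation), and $(\phi, \phi^{(c_p)}_R)$ is a $c_p$-conjugate pair relative to $(A_1, A_2^R)$. This establishes that at least one map satisfying all the hypotheses exists.

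Now fix any such $\phi$. By \cref{lemma:locallysemiconcaveBVH}~(ii), $\phi^{(c_p)}$ is locally semiconvex on $A_2$ and hence locally Lipschitz and differentiable almost everywhere by \cite[Theorem 2.1.7]{Cannarsa2004}. Since $\nu \ll \mathcal{L}^3$ and $\mathrm{supp}(\nu) \subseteq A_2$, the map $\phi^{(c_p)}$ is differentiable $\nu$-almost everywhere. Applying \cref{lemma:subdifferentialisagraph2} together with \cref{remark:subdifferentialisagraph} provides a set $\Gamma \subseteq \partial_{c_p}\phi$ of the graphical form \cref{eq:Gammagraph2} for which $\nu(P_2(\partial_{c_p}\phi \setminus \Gamma)) = 0$. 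Because the second marginal of $\pi$ is $\nu$, the standard computation
\[
\pi(\partial_{c_p}\phi \setminus \Gamma) \leq \pi\bigl(P_2^{-1}(P_2(\partial_{c_p}\phi \setminus \Gamma))\bigr) = \nu(P_2(\partial_{c_p}\phi \setminus \Gamma)) = 0
\]
forces $\pi(\Gamma) = 1$, and hence $\pi = (T \times \mathrm{Id})_\sharp \nu$ with $T$ given by \cref{eq:brenierexponential2}. Uniqueness follows from the usual convex combination trick: if $\pi, \pi'$ are two optimal plans, both of them (as well as their midpoint $\tfrac{1}{2}(\pi+\pi')$, which is again optimal) are induced by backward transport maps $T$, $T'$, $T''$ as above, which forces $T = T'$ $\nu$-almost everywhere.

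The main obstacle, as in the forward case, is not a single hard step but the orchestration of the regularity results in the correct direction: passing from cyclical monotonicity of $\pi$ to a $c_p$-concave potential whose \emph{transform} $\phi^{(c_p)}$ is differentiable $\nu$-a.e. requires both the compactness of $\mathrm{supp}(\mu)$ (so that $A_1$ stays bounded and \cref{lemma:phiRphicpR}~(ii) applies to make $A_2$ open) and the local semiconvexity of $\phi^{(c_p)}$ (so that $\nu \ll \mathcal{L}^3$ translates differentiability from Lebesgue-a.e. to $\nu$-a.e.). The geometric identification of $T$ itself is already encoded in \cref{lemma:Geometric2}, which is the dual counterpart of the key sub-Lorentzian Lagrange multiplier computation behind the forward theorem.
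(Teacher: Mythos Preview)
Your proposal is correct and follows exactly the approach the paper itself indicates: the paper does not write out a proof of \cref{thm:Brenier2} but simply states that it is obtained by mirroring the proof of \cref{thm:Brenier}, replacing \cref{lemma:phiRphicpR}~(i) and \cref{lemma:subdifferentialisagraph} by \cref{lemma:phiRphicpR}~(ii) and \cref{lemma:subdifferentialisagraph2}. You have carried out precisely this substitution, with the marginal argument correctly switched to the second projection and the regularity of $\phi^{(c_p)}$ supplied by \cref{lemma:locallysemiconcaveBVH}~(ii).
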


\begin{theorem}
\label{thm:conversebrenier2}
Let $\nu \in \mathcal{P}(\H)$, and let $A_1$ and $A_2$ be non-empty subsets of $\H$ such that $A_1 \times A_2 \subseteq \H_{\ll}^2$. Suppose $\phi : A_1 \to \R$ is a $c_p$-concave map relative to $(A_1, A_2)$, where $A_2$ is open, $\phi^{(c_p)}$ is real-valued and differentiable $\nu$-almost everywhere. 
If $\pi = (T \times \mathrm{Id})_\sharp \nu$, where $T$ is the map given by \cref{lemma:subdifferentialisagraph2}, then $\pi$ is an optimal transport plan between $\mu := T_\sharp \nu$ and $\nu$, provided the condition \cref{remark:condition(ii)} is satisfied for the pair $(\mu, \nu)$.
\end{theorem}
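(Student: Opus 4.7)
The plan is to mirror the proof of Theorem \ref{thm:conversebrenier} almost verbatim, swapping the roles played by the first and second marginals throughout, and using the backward versions of the graph lemma (Lemma \ref{lemma:subdifferentialisagraph2}) and of the Brenier-type lemma (Lemma \ref{lemma:Geometric2}) that have already been set up in the paper precisely for this purpose.

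Concretely, I would first invoke Lemma \ref{lemma:subdifferentialisagraph2}, together with the obvious analogue of Remark \ref{remark:subdifferentialisagraph} tailored to $\nu$-almost-everywhere differentiability of the $c_p$-transform, to produce the measurable set
\[
\Gamma = \{(T(q), q) \mid q \in S\} \subseteq \partial_{c_p}\phi
\]
from \eqref{eq:Gammagraph2}, with the property that $\nu\bigl(P_2(\partial_{c_p}\phi \setminus \Gamma)\bigr) = 0$. Since $\pi = (T \times \mathrm{Id})_\sharp \nu$ has second marginal equal to $\nu$, the chain
\[
\pi(\partial_{c_p}\phi \setminus \Gamma) \leq \pi\bigl(P_2^{-1}(P_2(\partial_{c_p}\phi \setminus \Gamma))\bigr) = \nu\bigl(P_2(\partial_{c_p}\phi \setminus \Gamma)\bigr) = 0
\]
yields $\pi(\Gamma) = 1$, and in particular $\pi(\partial_{c_p}\phi) = 1$. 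Theorem \ref{theorem:cCMiff} then immediately gives that $\pi$ is $c_p$-cyclically monotone.

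To conclude, I would check the hypotheses of Proposition \ref{prop:CMandOptimality}(ii). Since $T$ takes values in $A_1$ (because $(T(q), q) \in \partial_{c_p}\phi \subseteq A_1 \times A_2$ for every $q \in S$), the image measure $\mu = T_\sharp\nu$ is supported in $A_1$, while $\mathrm{supp}(\nu) \subseteq A_2$ by construction; hence $\mathrm{supp}(\mu) \times \mathrm{supp}(\nu) \subseteq A_1 \times A_2 \subseteq \H_\ll^2$, so condition \eqref{remark:condition(i)} is satisfied, and condition \eqref{remark:condition(ii)} is granted by hypothesis. Proposition \ref{prop:CMandOptimality}(ii) then identifies $\pi$ as an optimal transport plan between $\mu$ and $\nu$.

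Because the entire machinery (in particular Lemmas \ref{lemma:Geometric2} and \ref{lemma:subdifferentialisagraph2}, and Remark \ref{remark:subdifferentialisagraph}) has already been established symmetrically for both orientations of the transport problem, I do not anticipate any genuine obstacle distinct from the forward case; the only mild subtlety is the bookkeeping in the first display above, where one must exploit that $\pi$ has $\nu$ as its \emph{second} marginal (rather than its first) when passing from $\pi$ to $\nu$ via $P_2$.
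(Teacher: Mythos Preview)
Your proposal is correct and follows essentially the same approach as the paper: it mirrors the proof of Theorem~\ref{thm:conversebrenier} by replacing Lemma~\ref{lemma:subdifferentialisagraph} with Lemma~\ref{lemma:subdifferentialisagraph2}, swapping the roles of $P_1$ and $P_2$ (and hence $\mu$ and $\nu$) in the key step~\eqref{eq:keystepinbrenier}, and then concluding via Theorem~\ref{theorem:cCMiff} and Proposition~\ref{prop:CMandOptimality}(ii). The paper itself does not give a separate proof of Theorem~\ref{thm:conversebrenier2}, merely noting that it is ``entirely similar'' with the indicated substitutions, which is precisely what you have carried out.
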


\begin{remark}
\label{remark:Tt(q)geodesic}
	The following observation is made within the context of \cref{thm:Brenier} or \cref{thm:conversebrenier}. At a point $q \in \H$ where $\phi : A_1 \to \R$ is differentiable, the curve
	\[
	\interval{0}{1} \to \H : t \mapsto T_t(q) := \exp_q\left({-t\Diff_{q} \phi} \bigg/ {\left(\sqrt{2H(q, \Diff_q \phi)}\right)^{\tfrac{p - 2}{p-1}}}\right)
	\]
	is the unique maximising geodesic joining $q$ to $T(q)$ parametrised by constant speed on the interval $\interval{0}{1}$. In particular, any two admissible $c_p$-concave maps $\phi$ and $\tilde \phi$ inducing the optimal (forward) transport map \cref{eq:brenierexponential} must satisfy $\mathrm{D}_q \phi = \mathrm{D}_q \tilde \phi$ for $\mu$-almost every $q \in \H$. Similar reasoning can be made for \cref{thm:Brenier2} and \cref{thm:conversebrenier2}, the backward optimal transport map \cref{eq:brenierexponential2} and the function $\phi^{(c_p)}$ inducing it.
\end{remark}

Finally, when the conditions are met to ensure the existence of both the forward and backward optimal transport maps of \cref{thm:Brenier} and \cref{thm:Brenier2}, it can be shown that they are inverses of each other (almost everywhere).

\begin{theorem}
\label{theorem:inversetransportmap}
	Let $\mu, \nu \in \mathcal{P}_{\mathrm{c}}^{\mathrm{ac}}(\H, \mathcal{L}^3)$ with $\mathrm{supp}(\mu) \times \mathrm{supp}(\nu) \subseteq \H^2_\ll$. The forward optimal transport map $T^{\mu \to \nu}$ and the backward optimal transport map $T^{\nu \to \mu}$, given in \cref{eq:brenierexponential} and \cref{eq:brenierexponential2} respectively, are inverses. Specifically, $T^{\mu \to \nu} \circ T^{\nu \to \mu}(q) = q$ for $\nu$-almost every $q \in \H$ and $T^{\nu \to \mu} \circ T^{\mu \to \nu}(q) = q$ for $\mu$-almost every $q \in \H$.
\end{theorem}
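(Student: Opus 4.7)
The plan is to exploit that, under the hypotheses, both \cref{thm:Brenier} and \cref{thm:Brenier2} apply to the pair $(\mu,\nu)$, producing two distinct graph-descriptions of the \emph{same} optimal plan; the inverse relations then fall out of equating these two descriptions.

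First I would verify the applicability of both Brenier-type theorems. Since $\mu,\nu$ are compactly supported and $\mathrm{supp}(\mu)\times\mathrm{supp}(\nu)\subseteq \H_{\ll}^2$, continuity of $\tau$ on $\H_{\ll}^2$ from \cref{theorem:propertytau} yields a uniform bound $\tau(q,q')^p\leq C$ on $\mathrm{supp}(\mu)\times\mathrm{supp}(\nu)$, so condition \cref{remark:condition(ii)} holds. Absolute continuity of $\mu$ together with compactness of $\mathrm{supp}(\nu)$ activates \cref{thm:Brenier}, producing the unique forward optimal plan $\pi = (\mathrm{Id}\times T^{\mu\to\nu})_\sharp \mu$; symmetrically, absolute continuity of $\nu$ and compactness of $\mathrm{supp}(\mu)$ activate \cref{thm:Brenier2}.

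The crucial observation is that the backward Monge problem from $\nu$ to $\mu$ shares the same Kantorovich relaxation \cref{eq:LorentzKantorovich} as the forward problem from $\mu$ to $\nu$. Indeed, if $S$ is any backward transport map with $S_\sharp\nu=\mu$ and $S(y)\in J^-(y)$ for $\nu$-a.e. $y$, then $(S\times\mathrm{Id})_\sharp\nu$ belongs to $\Pi_\le(\mu,\nu)$ and has Kantorovich cost equal to $p\,M^-(S)$. Hence the optimal plan supplied by \cref{thm:Brenier2} is the same plan $\pi$, and \cref{thm:Brenier2} provides the second description
\[
\pi = (T^{\nu\to\mu}\times \mathrm{Id})_\sharp \nu.
\]

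The final step is a pushforward computation. For an arbitrary bounded Borel test function $g:\H^2\to \R$, evaluating $\int g(x,T^{\nu\to\mu}(y))\,\diff\pi(x,y)$ through both representations of $\pi$ gives
\[
\int_{\H} g\!\left(x,\, T^{\nu\to\mu}(T^{\mu\to\nu}(x))\right)\diff\mu(x) = \int_{\H} g(z,z)\,\diff\mu(z),
\]
where on the right I used $(T^{\nu\to\mu})_\sharp\nu=\mu$. This identity forces $T^{\nu\to\mu}\circ T^{\mu\to\nu} = \mathrm{Id}$ $\mu$-a.e. The symmetric relation $T^{\mu\to\nu}\circ T^{\nu\to\mu}=\mathrm{Id}$ $\nu$-a.e. is obtained by instead testing with $(x,y)\mapsto g(T^{\mu\to\nu}(x),y)$. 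I do not anticipate a serious obstacle: the substance of the argument is concentrated in the \emph{uniqueness} parts of \cref{thm:Brenier} and \cref{thm:Brenier2}, and the only delicate point to articulate carefully is that forward and backward Kantorovich problems have identical admissible sets and cost functionals, so uniqueness in both formulations refers to one and the same plan.
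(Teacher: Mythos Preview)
Your argument is correct, but it takes a different route from the paper's. The paper argues geometrically via \cref{remark:Tt(q)geodesic}: for $\mu$-a.e.\ $q$ the curve $t\mapsto T_t(q)$ is the unique maximising geodesic from $q$ to $T^{\mu\to\nu}(q)$, and since $T^{\nu\to\mu}$ is built from the same exponential-map formula with the roles reversed, applying it at $T^{\mu\to\nu}(q)$ simply traces that geodesic back to $q$. Your approach is instead purely measure-theoretic: you identify the single optimal plan $\pi$ through its two graph representations and read off the inverse relations by a pushforward/test-function identity. Both are valid; the paper's version leans on the explicit geodesic structure of $\H$ (and on the injectivity statement in \cref{thm:geodesicspace}), while yours is the standard optimal-transport argument that works whenever forward and backward Brenier theorems give uniqueness of the same Kantorovich plan, with no further geometric input needed. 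A minor remark: your final deduction from the test-function identity is correct but could be stated more directly by observing that $\pi$ is concentrated simultaneously on $\{y=T^{\mu\to\nu}(x)\}$ and on $\{x=T^{\nu\to\mu}(y)\}$, which immediately gives $x=T^{\nu\to\mu}(T^{\mu\to\nu}(x))$ for $\pi$-a.e.\ $(x,y)$ and hence $\mu$-a.e.\ $x$.
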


\begin{proof}
	This follows directly from \cref{remark:Tt(q)geodesic}, and we argue only for $T^{\nu \to \mu} \circ T^{\mu \to \nu}$. For $\mu$-almost every $q \in \H$, there exists a unique maximising geodesic from $q$ to $T^{\mu \to \nu}(q)$, and $T^{\nu \to \mu}$ is defined at $T^{\mu \to \nu}(q)$. The map $T^{\nu \to \mu}$ returns the endpoint of this geodesic curve back to its starting point, ensuring that $T^{\nu \to \mu}(T^{\mu \to \nu}(q)) = q$.
\end{proof}

\begin{remark}
	\label{remark:expqtoexpe}
	The sub-Riemannian version of Brenier's theorem in \cite[Theorem 5.1]{AmbrosioRigot2004} is stated using the exponential map $\exp_e$, based at the identity element of $\H$, rather than the exponential map $\exp_q$, as we do in \cref{eq:brenierexponential}. We can do the same using left-invariance and \cref{eq:expqtoexpe}.
\end{remark}

\subsection{A sub-Lorentzian Monge-Ampère type equation and some examples}

\label{subsection:MongeAmpere+examples}

It is well known that Brenier's Theorem is closely linked to the Monge-Ampère equation. This can be observed by deriving the partial differential equation that arises from a change of variable on the condition $T_\sharp \mu_0 = \mu_1$. We follow some ideas presented in \cite[Section 5.1]{Barilari2019} for sub-Riemannian manifolds.
In the smooth Lorentzian setting, this was achieved in \cite[Corollary 5.9, Theorem 5.10, Corollary 5.11]{McCann2020}, but see also \cite[Corollary 4.20]{BraunOhta2024} for the Lorentz-Finsler case.

{
\begin{theorem}
	\label{thm:mongeampere}
	Under the same assumptions and notations of \cref{thm:Brenier} and \cref{remark:Tt(q)geodesic}, the map $T_t : \H \to \H$  is differentiable $\mu_0$-almost everywhere and $\det(\diff_q T_t) > 0$ for all $t \in \interval{0}{1}$ and $\mu_0$-almost every $q \in \H$. Moreover, there exists a unique $\ell_p$-geodesic from $\mu_0 := \mu$ to $\mu_1 := \nu$, given by $\mu_t = (T_t)_{\sharp}\mu_0$ for $t \in \interval{0}{1}$, and $\mu_t \ll \mathcal{L}^3$ for all $t \in \rinterval{0}{1}$. Finally, denoting by $\rho_t := \diff \mu_t / \diff \mathcal{L}^3$ for $t \in \rinterval{0}{1}$, it holds that
	\begin{equation}
		\label{eq:Monge-Ampere}
		\rho_0(q) = \rho_t(T_t(q)) \mathrm{det}(\diff_q T_t),
	\end{equation}
	for $\mu_0$-almost every $q \in \H$ and all $t \in \rinterval{0}{1}$. This conclusion also holds for $t = 1$ provided $\mu_1 \ll \mathcal{L}^3$.
	\end{theorem}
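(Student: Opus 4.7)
The plan is to combine the local semiconcavity of $\phi$, granted by \cref{lemma:locallysemiconcaveBVH} after the extension procedure of \cref{lemma:phiRphicpR}, with the real-analyticity of the sub-Lorentzian exponential map in order to establish pointwise differentiability of $T_t$, and then to apply the area formula for locally Lipschitz maps $\H \to \H$ in exponential coordinates. Since $\phi$ is locally semiconcave on an open bounded neighbourhood of $\mathrm{supp}(\mu_0)$, Alexandrov's theorem \cite{Cannarsa2004} guarantees that $\phi$ admits a second-order Taylor expansion $\mathcal{L}^3$-almost everywhere, hence $\mu_0$-almost everywhere since $\mu_0 \ll \mathcal{L}^3$. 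Because $T_t$ is obtained as the composition of $q \mapsto (q, \Diff_q \phi)$ with the real-analytic map $(q, \lambda_0) \mapsto \exp_q(-t \lambda_0 / (\sqrt{2 H(q, \lambda_0)})^{(p-2)/(p-1)})$, we conclude that $T_t$ is differentiable $\mu_0$-almost everywhere for every $t \in \interval{0}{1}$.

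The central technical point is the positivity $\det(\Diff_q T_t) > 0$ for every $t \in \interval{0}{1}$ at $\mu_0$-almost every $q$. For $t = 0$ this is trivial since $T_0 = \mathrm{Id}$. For general $t$, I would follow the strategy of \cite[Proposition 5.5]{AmbrosioRigot2004} and \cite[Proposition 4.7]{Figalli2010}: the semiconcavity of $\phi$ provides a one-sided bound on its Hessian at any twice-differentiable point $q$, and this bound propagates through the real-analytic exponential flow \cref{eq:xyzcnotzero2} to yield a strictly positive lower bound on $\det(\Diff_q T_t)$ along the interpolating path $s \mapsto T_s(q)$. The main obstacle here is to rule out conjugate points of the timelike exponential flow along this path; this can be handled using the explicit Hamiltonian flow together with the fact recorded in \cref{thm:geodesicspace} that $\exp_q$ restricts to a real-analytic diffeomorphism on the set of future-directed timelike covectors onto $I^+(q)$, which precludes the appearance of such conjugate points strictly before the cut time.

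For the geodesic structure, define $\mu_t := (T_t)_\sharp \mu_0$ for $t \in \interval{0}{1}$. By \cref{remark:Tt(q)geodesic}, $s \mapsto T_s(q)$ is the unique constant-speed maximising geodesic from $q$ to $T(q)$, so
\[
\tau(T_s(q), T_t(q)) = (t - s)\, \tau(q, T(q)), \quad \text{for all } 0 \leq s \leq t \leq 1 \text{ and } \mu_0\text{-a.e. } q.
\]
Since $(T_s, T_t)_\sharp \mu_0 \in \Pi_\le(\mu_s, \mu_t)$, integrating the $p$-th power against $\mu_0$ gives $\ell_p(\mu_s, \mu_t) \geq (t-s)\, \ell_p(\mu_0, \mu_1)$, and the reverse triangle inequality for $\ell_p$ yields the opposite inequality. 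Hence $(\mu_t)_{t \in \interval{0}{1}}$ is an $\ell_p$-geodesic in the sense of \cref{eq:Wassersteingeodesic}, and uniqueness is inherited from the uniqueness of the optimal transport map in \cref{thm:Brenier} applied to each intermediate pair $(\mu_0, \mu_t)$. Absolute continuity of $\mu_t$ for $t \in \rinterval{0}{1}$ follows by applying the area formula to the locally Lipschitz map $T_t$ with positive Jacobian; the same formula yields the Monge-Ampère equation \cref{eq:Monge-Ampere}, since for any measurable $A \subseteq \H$,
\[
\int_A \rho_0(q)\, \diff \mathcal{L}^3(q) = \mu_0(A) = \mu_t(T_t(A)) = \int_A \rho_t(T_t(q))\, \det(\Diff_q T_t)\, \diff \mathcal{L}^3(q),
\]
the last equality being the change-of-variables formula for the injective locally Lipschitz map $T_t$. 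Since $A$ is arbitrary, this proves \cref{eq:Monge-Ampere} $\mu_0$-almost everywhere; the case $t = 1$ follows identically under the extra assumption $\mu_1 \ll \mathcal{L}^3$.
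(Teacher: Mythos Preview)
Your differentiability argument (Alexandrov on the semiconcave $\phi$, composed with the analytic exponential) and your $\ell_p$-geodesic argument (reverse triangle inequality) match the paper. The gap is in what follows.

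You assert that $T_t$ is \emph{locally Lipschitz} and invoke the area formula and change of variables for such maps. This is not justified and in general false: $T_t$ depends on $q \mapsto \Diff_q \phi$, and for a merely semiconcave $\phi$ the gradient is of bounded variation but not Lipschitz. Alexandrov's theorem gives you a pointwise second-order expansion $\mu_0$-a.e., hence (approximate) differentiability of $T_t$, but nothing more. Consequently your derivation of $\mu_t \ll \mathcal{L}^3$ from ``area formula for a locally Lipschitz $T_t$ with positive Jacobian'' and the final change-of-variables step, both stated for a Lipschitz $T_t$, are unsupported as written. You also never establish the $\mu_0$-a.e.\ injectivity of $T_t$ that you use in the last display.

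The paper takes a different route precisely to avoid this regularity issue. It introduces the interpolated potentials
\[
\phi_t(q'') := \sup_{q \in \mathrm{supp}(\mu)} \Big( \phi(q) + t^{1-p} c_p(q, q'') \Big), \qquad
\phi^{(c_p)}_{1-t}(q'') := \inf_{q' \in A_2} \Big( \phi^{(c_p)}(q') - (1-t)^{1-p} c_p(q'', q') \Big),
\]
shows via \cref{eq:triangle+concave} that $\phi^{(c_p)}_{1-t} - \phi_t \geq 0$ with equality on $T_t(A_1)$, and then uses that a semiconvex and a semiconcave function touching from the correct sides have a \emph{common, locally Lipschitz} gradient there (as in \cite[Theorem~A.19]{Figalli2010}). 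This yields a locally Lipschitz \emph{inverse} map $F_t$ with $F_t \circ T_t = \mathrm{Id}$ $\mu_0$-a.e., from which $\mu_t \ll \mathcal{L}^3$ follows by contraposition. Only after absolute continuity and $\mu_0$-a.e.\ injectivity (via \cref{theorem:inversetransportmap}) are in hand does the paper apply the area formula of \cite[Lemma~40]{Barilari2019}, which simultaneously delivers $\det(\diff_q T_t) > 0$ and \cref{eq:Monge-Ampere}. So the paper's logical order is the reverse of yours: absolute continuity first, Jacobian positivity as a consequence---and the key device you are missing is the Lipschitz regularity of the \emph{inverse}, not of $T_t$ itself.
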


\begin{remark}
	The determinant of the linear map $\diff_q T_t : \T_q(\H) \to \T_{T_t(q)}(\H)$ in \cref{thm:mongeampere} is computed with respect to the exponential coordinates $(x, y, z)$.
\end{remark}

\begin{proof}
	The map $\phi : A_1 \to \R$ appearing in \cref{eq:brenierexponential} is twice differentiable $\mathcal{L}^3$-almost everywhere. Indeed, it is locally semiconcave by \cref{lemma:locallysemiconcaveBVH}, thus twice differentiable almost everywhere by Alexandrov's Theorem (see \cite[Theorem A.5]{Figalli2010}). Since $\mu_0 \ll \mathcal{L}^3$, we deduce that $T_t$ is differentiable $\mu_0$-almost everywhere. Furthermore, we have, for $0 \leq s \leq t \leq 1$, that
	\begin{align*}
		\ell_p(\mu_s, \mu_t)^p \geq{}& \int_{\H \times \H} c_p(T_s(x), T_t(x)) \mu_0(\diff x)\\
        ={}& (t - s)^p \int_{\H \times \H}c_p(x, T(x)) \mu_0(\diff x) = (t - s)^p \ell_p(\mu_0, \mu_1).
	\end{align*}
    The reverse triangle inequality gives
    \begin{align*}
        \ell_p(\mu_0,\mu_1) \geq{}& \ell_p(\mu_0,\mu_s) + \ell_p(\mu_s,\mu_t) + \ell_p(\mu_t,\mu_1) \\
        \geq{}& s \ell_p(\mu_0,\mu_1) + (t - s) \ell_p(\mu_0,\mu_1) + (1 - t) \ell_p(\mu_0,\mu_1) = \ell_p(\mu_0,\mu_1),
    \end{align*}
    and thus $\ell_p(\mu_s, \mu_t) = (t - s) \ell_p(\mu_0, \mu_1)$, proving that $(\mu_t)_{t \in \interval{0}{1}}$ is an $\ell_p$-geodesic according to \cref{eq:Wassersteingeodesic}. The fact that this geodesic is the unique one from $\mu_0$ to $\mu_1$ follows from the uniqueness statements in \cref{thm:Brenier}.

	We are now going to prove that $\mu_t \ll \mathcal{L}^3$, adapting the ideas in \cite[Section 6.3]{Figalli2010}. By the reverse triangle inequality and the concavity of the function $t \mapsto t^p$ for $p \in \ointerval{0}{1}$, we have, for all $q, q', q'' \in \H$ and $t \in \ointerval{0}{1}$, that
	\begin{equation}
		\label{eq:triangle+concave}
		c_p(q, q') \geq \frac{1}{p}(\tau(q, q'') + \tau(q'', q'))^p \geq \frac{c_p(q, q'')}{t^{p - 1}} + \frac{c_p(q'', q')}{(1 - t)^{p - 1}},
	\end{equation}
	with equality if and only if there exists a constant speed geodesic $\gamma : \interval{0}{1} \to \H$ such that $\gamma(0) = q$, $\gamma(1) = q'$ and $\gamma(t) = q''$. By the definition of $T_t$, we therefore have
	\begin{equation}
		\label{eq:cpTt}
		c_p(q, T(q)) = \frac{c_p(q, T_t(q))}{t^{p - 1}} + \frac{c_p(T_t(q), T(q))}{(1 - t)^{p - 1}},
	\end{equation}
	for all $t \in \interval{0}{1}$ and $\mu_0$-a.e. $q \in \H$. We now set the following function, given $q'' \in \H$,
	\[
	\phi_{t}(q'') := \sup_{q \in \mathrm{supp}(\mu)} \left( \phi(q) + \frac{c_p(q, q'')}{t^{p - 1}} \right)
	\]
	and
	\[
	\phi^{(c_p)}_{1 - t}(q'') := \inf_{q' \in A_2} \left( \phi^{(c_p)}(q') + \frac{c_p(q'', q')}{(1 - t)^{p - 1}} \right).
	\]
	Since $\phi^{(c_p)}(q') - \phi(q) \geq c_p(q, q')$ for all $(q, q') \in A_1 \times A_2$, we obtain $\phi^{(c_p)}_{1 - t}(q) - \phi_t(q) \geq 0$, using \cref{eq:triangle+concave}. By construction, we also have that $\phi^{(c_p)}(T(q)) - \phi(q) = c_p(q, T(q))$ for $\mu_0$-a.e. $q \in \H$ and therefore, with \cref{eq:cpTt}, we obtain
	\[
	\phi^{(c_p)}_{1 - t}(T_t(q)) - \phi(T_t(q)) = 0, \text{ for $t \in \ointerval{0}{1}$ and $\mu_0$-a.e. $q \in \H$}.
	\]
	This identity can be rewritten as
	\[
	\phi^{(c_p)}_{1 - t}(q) - \phi(q) = 0, \text{ for $t \in \ointerval{0}{1}$ and $\mu_t$-a.e. $q \in \H$}.
	\]
	By the properties of the time-separation function and the boundedness of $\mathrm{supp}(\mu)$ and $A_2$, we can once more infer good regularity properties on $\phi^{(c_p)}_{1 - t}$ and $\phi$: the former must be locally semiconvex and the latter locally semiconcave, in a neighbourhood of $T_t(A_1) = T_{1-t}(A_2)$. From \cite[Theorem A.19]{Figalli2010}, we deduce that the differentials $\mathrm{D}_q \phi^{(c_p)}_{1 - t}$ and $\mathrm{D}_q \phi$ exist, are equal for $\mu_0$-almost every $q \in \H$, and that the map $q \mapsto \mathrm{D}_q \phi^{(c_p)}_{1 - t} = \mathrm{D}_q \phi$ is locally Lipschitz (in charts) on $T_t(A_1)$. Given $q \in \mathrm{supp}(\mu)$, we have that
	\[
	\phi_t(q'') \geq \phi(q) + \frac{c_p(q, q'')}{t^{p - 1}}, \text{ i.e. } \psi_t(q''):= t^{p - 1}(\phi_t(q'') - \phi(q)) \geq c_p(q, q'')
	\]
	for all $q'' \in \H$, and with equality at $q'' = T_t(q)$ for $\mu_0$-a.e. $q \in \H$. \cref{lemma:Geometric2} yields that
	\[
	q = \exp_{T_t(q)}\left({\Diff_{T_t(q)} \psi_t} \bigg/ {\left(\sqrt{2H(T_t(q), \Diff_{T_t(q)} \psi_t)}\right)^{\tfrac{p - 2}{p-1}}}\right).
	\]
	Consequently, since the map
	\[
	F_t := q \mapsto \exp_{q}\left({\Diff_{q} \psi_t} \bigg/ {\left(\sqrt{2H(q, \Diff_{q} \psi_t)}\right)^{\tfrac{p - 2}{p-1}}}\right)
	\]
	is locally Lipschitz on $\mathrm{supp}(\mu_t) \cap T_t(A_1)$, we deduce that $\mu_t$ is absolutely continuous. Indeed, if that were not the case, $\mu_0 = (F_t)_\sharp \mu_t$ would also not be absolutely continuous, contradicting our assumption.

    In particular, $\mu_0, \mu_t \in \mathcal{P}_{\mathrm{c}}^{\mathrm{ac}}(\H, \mathcal{L}^3)$ with $\mathrm{supp}(\mu) \times \mathrm{supp}(\nu) \subseteq \H^2_\ll$ and the same arguments as in \cref{theorem:inversetransportmap} yield that $T_t$ is injective $\mu_0$-almost everywhere. It remains to apply the general area formula general found in \cite[Lemma 40]{Barilari2019}, and we obtain $\det(\diff_q T_t) > 0$ for all $t \in \interval{0}{1}$ and $\mu_0$-almost every $q \in \H$, as well as \cref{eq:Monge-Ampere}.
\end{proof}
}

Finally, we give some examples of optimal transport maps in the sub-Lorentzian Heisenberg group. We start with a simple example that is only based on geometric arguments.
We denote by $\mathsf{p} : \H \to \R^2$ the projection defined as $\mathsf{p}(x, y, z) = (x, y)$. The Minkowski plane consists of the Lorentzian structure on $\R^2$ induced from the metric
	\[
	\langle (u_1, u_2), (v_1, v_2) \rangle := u_2 v_2 - u_1 v_1.
	\]
	It is well-known that the time-separation function $\tilde \tau$ of the Minkowski plane is given by
	\[
	\qquad \tilde{\tau}(u, v) = \sqrt{- \langle u - v, u - v \rangle},
	\]
	for all $u, v \in \R^2$ such that $\langle u, v \rangle \leq 0$ and $\tilde \tau = 0$ otherwise. Note that the causal and chronological future/past of $(x, y)\in \R^2$ are well-known to be given by
	\begin{equation}
    \label{eq:tildeJ+}
    \tilde J^{\pm}(x, y) := \left\{ (x', y') \in \H | -(x -x')^2 + (y - y')^2 \leq 0, \pm x \geq 0 \right\}
	\end{equation}
	and
	\begin{equation}
    \label{eq:tildeI+}
    \tilde I^{\pm}(x, y) := \left\{ (x', y') \in \H | -(x - x')^2 + (y - y')^2 < 0, \pm x > 0   \right\}.
	\end{equation}
	One may consider the Monge-Kantorovich optimal transport problem in this setting with the cost $\tilde c_p := \tilde \tau^p/p$. The Minkowski analogue to the functional appearing in \cref{eq:LorentzMonge} will be written as $\tilde M^+$.

\begin{proposition}
\label{prop:projectionToptimal}
	Let $\mu_0, \mu_1 \in \mathcal{P}(\H)$ and $\tilde \mu_0, \tilde \mu_1 \in \mathcal{P}(\R^2)$. Suppose that $\mathsf{p}_\sharp \mu_0 = \tilde \mu_0$ and assume that $\tilde T = (\tilde T_1, \tilde T_2) : \R_2 \to \R_2$ is a (forward) transport map from $\tilde \mu_0$ to $\tilde \mu_1$ optimal for the Monge problem in the Minkowski plane equipped with the Lorentzian cost $\tilde c_p$. Then, the map $T : \H \to \H$ given by
	\[
	T(x, y, z) := (x, y, z) \cdot \exp_e(-(\tilde T_1(x, y)- x) \diff u + (\tilde T_2(x, y) - y) \diff v)
	\]
	is an optimal (forward) transport map from $\mu_0$ to $\mu_1$ for the Monge problem in $\H$ with cost $c_p$.
\end{proposition}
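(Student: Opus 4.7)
The plan is to reduce the sub-Lorentzian optimality of $T$ to the assumed Minkowski optimality of $\tilde T$ via the projection $\mathsf{p}$ and the bound $\tau(q, q') \leq \tilde \tau(\mathsf{p}(q), \mathsf{p}(q'))$ from \cref{eq:tauleqptau}. The key preliminary step is to compute, using formula \cref{eq:xyzcnotzero2} with $w_0 = 0$ and the homogeneity of the Hamiltonian, that for $q = (x, y, z)$ the curve
\[
\gamma_q(t) := q \cdot \exp_e\left(t \bigl(-(\tilde T_1(x, y) - x) \diff x + (\tilde T_2(x, y) - y) \diff y\bigr)\right)
\]
is horizontal with constant control $u_1 = \tilde T_1(x, y) - x$, $u_2 = \tilde T_2(x, y) - y$, joins $q$ to $T(q)$, and has spatial projection $\mathsf{p}(\gamma_q(t)) = (1 - t) \mathsf{p}(q) + t \tilde T(\mathsf{p}(q))$, which is the Minkowski straight line between the endpoints. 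Since $\tilde T$ is a forward Minkowski transport map, $u_1 \geq |u_2|$, making $\gamma_q$ future-directed causal; hence $T(q) \in J^+(q)$ and $\L(\gamma_q) = \sqrt{u_1^2 - u_2^2} = \tilde \tau(\mathsf{p}(q), \tilde T(\mathsf{p}(q)))$. Combined with \cref{eq:tauleqptau}, this sandwiches to the identity $\tau(q, T(q)) = \tilde \tau(\mathsf{p}(q), \tilde T(\mathsf{p}(q)))$ and makes $\gamma_q$ a maximising geodesic.

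Next, the relation $\mathsf{p} \circ T = \tilde T \circ \mathsf{p}$ combined with $\mathsf{p}_\sharp \mu_0 = \tilde \mu_0$ and $\tilde T_\sharp \tilde \mu_0 = \tilde \mu_1$ gives $\mathsf{p}_\sharp (T_\sharp \mu_0) = \tilde \mu_1$, and integrating the above identity against $\mu_0$ yields $M^+(T) = \tilde M^+(\tilde T)$. To compare against a competing forward transport map $S$ from $\mu_0$ to $\mu_1 := T_\sharp \mu_0$, I would form the Minkowski plan $\pi_S := (\mathsf{p}, \mathsf{p} \circ S)_\sharp \mu_0 \in \Pi(\tilde \mu_0, \tilde \mu_1)$, which is causal because the projection sends the horizontal basis $X, Y$ to $\partial_x, \partial_y$ and hence $\H$-causal curves to Minkowski-causal ones. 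Applying \cref{eq:tauleqptau} pointwise to $(q, S(q))$ then gives
\[
M^+(S) \leq \int_{\R^2 \times \R^2} \tilde c_p(w, w') \diff \pi_S(w, w') \leq \tilde \ell_p^p(\tilde \mu_0, \tilde \mu_1),
\]
where $\tilde \ell_p$ denotes the Minkowski $p$-Lorentz-Wasserstein cost.

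The main obstacle is closing the chain $M^+(S) \leq \tilde \ell_p^p \stackrel{?}{=} \tilde M^+(\tilde T) = M^+(T)$: Monge-optimality of $\tilde T$ yields only $\tilde M^+(\tilde T) \leq \tilde \ell_p^p$ a priori, and the reverse inequality is Kantorovich-Monge agreement in the Minkowski plane, which generally requires a Brenier-type theorem there (e.g.\ under $\tilde \mu_0 \ll \mathcal{L}^2$). A cleaner alternative that bypasses this gap is via cyclical monotonicity: knowing that $(\mathrm{Id} \times \tilde T)_\sharp \tilde \mu_0$ is $\tilde c_p$-cyclically monotone, the identity $c_p(q_i, T(q_i)) = \tilde c_p(\mathsf{p}(q_i), \tilde T(\mathsf{p}(q_i)))$ together with $c_p(q_{i+1}, T(q_i)) \leq \tilde c_p(\mathsf{p}(q_{i+1}), \tilde T(\mathsf{p}(q_i)))$ transports any Minkowski monotonicity chain into a sub-Lorentzian one, and \cref{prop:CMandOptimality}(ii) then delivers optimality of the induced plan $(\mathrm{Id} \times T)_\sharp \mu_0$, provided $\tilde T(\mathsf{p}(q)) \in \tilde I^+(\mathsf{p}(q))$ for $\tilde \mu_0$-a.e.\ $q$ so that $T(q) \in I^+(q)$.
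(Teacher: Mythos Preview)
Your argument follows the paper's route almost exactly: compute $T(q)$ explicitly via \cref{eq:xyzcnotzero2} with $w_0=0$, deduce $\mathsf{p}\circ T=\tilde T\circ\mathsf{p}$ and $\tau(q,T(q))=\tilde\tau(\mathsf{p}(q),\tilde T(\mathsf{p}(q)))$ (the paper obtains this from \cref{eq:tausamez} rather than by a sandwich, but your version is fine), hence $M^+(T)=\tilde M^+(\tilde T)$; then for a competitor $S$ project down and use \cref{eq:tauleqptau}. The paper phrases the last step as ``$\mathsf{p}\circ S$ is a (forward) transport map from $\tilde\mu_0$ to $\tilde\mu_1$'' and writes the chain $M^+(T)=\tilde M^+(\tilde T)\geq\tilde M^+(\mathsf{p}\circ S)\geq M^+(S)$ directly, without passing through $\tilde\ell_p$.

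You are right to flag the middle inequality as the delicate point: $\mathsf{p}\circ S$ is a map $\H\to\R^2$, so what it really induces is the \emph{coupling} $\pi_S=(\mathsf{p},\mathsf{p}\circ S)_\sharp\mu_0\in\Pi_\leq(\tilde\mu_0,\tilde\mu_1)$, and comparing its cost to $\tilde M^+(\tilde T)$ is a Kantorovich (not Monge) statement. The paper glosses over this with informal notation; you identify it explicitly. Note, however, that your cyclical-monotonicity workaround does not escape the same issue: to know that $(\mathrm{Id}\times\tilde T)_\sharp\tilde\mu_0$ is $\tilde c_p$-cyclically monotone you invoke the analogue of \cref{prop:CMandOptimality}(i), which applies to optimal \emph{plans}, so you again need Monge and Kantorovich optima to agree in the Minkowski plane. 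In the paper's only application (the right-translation theorem immediately following) one has $\tilde\mu_0=\mathsf{p}_\sharp\mu_0\ll\mathcal{L}^2$, so Brenier in Minkowski space closes this gap; as a standalone statement, both the paper's proof and your two routes rely on this implicit hypothesis.
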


\begin{proof}
	Note that the assumption that $\tilde T$ is a (forward) transport map means that $\tilde T(x, y) \in \tilde J^+(x, y)$, i.e. $(\tilde T_1(x, y) - x)^2 - (\tilde T_2(x, y) - y)^2 \geq 0$, and $\tilde T_1(x, y) \geq 0$ for $\tilde \mu_0$-a.e. $(x, y) \in \R^2$.
	Equation \cref{eq:xyzcnotzero} with $w_0 \to 0$ gives
	\[
	\exp_e(-(\tilde T_1(x, y)- x) \diff u + (\tilde T_2(x, y) - y) \diff v) = (\tilde T_1(x, y) - x, \tilde T_2(x, y) - y, 0),
	\]
	and thus $\mathsf{p} \circ T = \tilde T$.
	Left-invariance of the time-separation function $\tau$ in $\H$ and \cref{eq:tausamez} yields
	\begin{align*}
		\tau((x, y, z), T(x, y, z)) ={}& \tau(e, \exp_e(-(\tilde T_1(x, y)- x) \diff u + (\tilde T_2(x, y) - y) \diff v))\\
		 ={}& \sqrt{(\tilde T_1(x, y)- x)^2 - (\tilde T_2(x, y) - y)^2} = \tilde \tau((x, y), \tilde T(x, y)).
	\end{align*}
	Since $\mathsf{p}_{\sharp} \mu_0 = \tilde \mu_0$, it follows that $T$ is a (forward) transport map from $\mu_0$ to $\mu_1$ and that $M^+(T) = \tilde M^+(\tilde T)$. If $S : \H \to \H$ is any (forward) transport map from $\mu_0$ to $\mu_1$, then it is easily seen from \cref{eq:J+} that $\mathsf{p} \circ S$ is a (forward) transport map from $\tilde \mu_0$ to $\tilde \mu_1$. From \cref{eq:tauleqptau}, we have that $M^+(S) \leq \tilde M^+(\mathsf{p} \circ S)$. Therefore, it holds that

	\[
	M^+(T) = \tilde M^+(\tilde T) \geq \tilde M^+(\mathsf{p} \circ S) \geq M^+(S),
	\]
	which shows that $T$ is optimal.
\end{proof}

Statements analogous to \cref{prop:projectionToptimal} may also be proven for \cref{eq:LorentzMonge-} and \cref{eq:LorentzKantorovich} in a similar fashion. Let us recall that the right-translation of the Heisenberg group from a given point $q_0 \in \H$ is the map $R_{q_0} : \H \to \H$ given by $R_{q_0}(q) = q \cdot q_0$. The sub-Riemannian version of the previous result can be found in \cite[Example 2.2.3]{Juillet2008}. A corresponding analogue of the next result appears in \cite[Example 5.7]{AmbrosioRigot2004} (see also the proof of \cite[Lemma 4.13]{AmbrosioRigot2004}).

\begin{theorem}
	Given $\mu_0 \in \mathcal{P}_{\mathrm{c}}(\H)$ with $\mu_0 \ll \mathcal{L}^3$, the right-translation $R_{q_0}$ from the point $q_0 = (x_0, y_0, z_0) \in \H$ is a (forward) optimal transport map from $\mu_0$ to $(R_{q_0})_{\sharp} \mu_0$ if and only if $z_0 = 0$ and $(x + x_0, y + y_0) \in \tilde I^{+}(x, y)$ for $\mu_0$-a.e. $(x, y) \in \R^2$.
\end{theorem}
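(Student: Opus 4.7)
The plan is to prove each direction separately.

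For the sufficient direction, suppose $z_0 = 0$ and $x_0 > |y_0|$, so $q_0 \in I^+(e)$. The Minkowski translation $\tilde T(x, y) := (x + x_0, y + y_0)$ pushes $\tilde \mu_0 := \mathsf{p}_\sharp \mu_0$ onto $\mathsf{p}_\sharp (R_{q_0})_\sharp \mu_0$ by the future-directed timelike vector $(x_0, y_0)$. A Hessian computation shows the function $w \mapsto (w_1^2 - w_2^2)^{p/2}$ is strictly concave on the open future timelike cone of the Minkowski plane, so a standard argument via Jensen's inequality yields $\tilde c_p$-cyclical monotonicity of the graph of $\tilde T$, whence its forward optimality in the Minkowski plane by the Minkowski analogue of \cref{prop:CMandOptimality}. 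Applying \cref{prop:projectionToptimal} and computing $\exp_e(-x_0 \diff u + y_0 \diff v) = (x_0, y_0, 0)$ from \cref{eq:xyzcnotzero2} in the limit $w_0 \to 0$, the lifted forward optimal map is $(x, y, z) \mapsto (x, y, z) \cdot (x_0, y_0, 0)$, which coincides with $R_{q_0}$ exactly when $z_0 = 0$.

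For the converse, suppose $R_{q_0}$ is forward optimal. Being a forward transport map requires $\tau(q, R_{q_0}(q)) > 0$ $\mu_0$-a.e., so by left-invariance $\tau(e, q_0) > 0$, i.e., $q_0 \in I^+(e)$; this already yields $x_0 > |y_0|$, equivalently $(x + x_0, y + y_0) \in \tilde I^+(x, y)$ for every $(x, y)$. To conclude $z_0 = 0$, I would localize: pick an interior point $q_* \in \mathrm{supp}(\mu_0)$ and an open ball $B \ni q_*$ small enough that $B \times R_{q_0}(B) \subseteq \H^2_\ll$, which is possible by openness of $I^+(e)$. On the renormalized restriction of $\mu_0$ to $B$, the hypotheses of \cref{thm:Brenier} are met and $R_{q_0}|_B$ remains forward optimal (cyclical monotonicity is preserved under restriction, and \cref{prop:CMandOptimality} (ii) applies). \cref{thm:Brenier} then provides a $c_p$-concave $\phi : B \to \R$ such that $R_{q_0}(q) = \exp_q(-\Diff_q\phi/(\sqrt{2H(q, \Diff_q\phi)})^{(p-2)/(p-1)})$ at each differentiability point $q$.

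Writing $R_{q_0}(q) = q \cdot \exp_e(\lambda_*)$ with $\lambda_* := \exp_e^{-1}(q_0)$ via \cref{eq:expqtoexpe}, and exploiting the left-invariance of $H$, the covector $-\Diff_q\phi/(\sqrt{2H(q, \Diff_q\phi)})^{(p-2)/(p-1)}$ must equal the left-invariant extension of $\lambda_*$ at $q$, namely $\alpha(q) := h_X(\lambda_*)\omega^X(q) + h_Y(\lambda_*)\omega^Y(q) + h_Z(\lambda_*)\omega^Z(q)$ in the Maurer-Cartan coframe. Since the scalar factor is $q$-independent (by left-invariance of $H$), $\Diff\phi$ must be a constant multiple of the left-invariant 1-form $\alpha$. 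For $\Diff\phi$ to be exact, $\alpha$ must be closed; using $d\omega^X = d\omega^Y = 0$ and $d\omega^Z = -\diff x \wedge \diff y$, one computes $d\alpha = -h_Z(\lambda_*)\diff x \wedge \diff y$, forcing $h_Z(\lambda_*) = 0$. The explicit formula \cref{eq:xyzcnotzero2} then gives $z_0 = (h_X(\lambda_*)^2 - h_Y(\lambda_*)^2)(\sinh h_Z(\lambda_*) - h_Z(\lambda_*))/(2 h_Z(\lambda_*)^2)$, which vanishes as $h_Z(\lambda_*) \to 0$ (noting $H(e, \lambda_*) \neq 0$ since $q_0 \in I^+(e)$), yielding $z_0 = 0$. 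The main obstacle is the covector-matching step identifying $\Diff\phi$ as a constant multiple of a left-invariant 1-form; once established, the exactness constraint forces $h_Z(\lambda_*) = 0$ and hence $z_0 = 0$ immediately.
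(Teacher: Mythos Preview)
Your sufficient direction is essentially the paper's: invoke Minkowski optimality of the translation $\tilde T$ and lift via \cref{prop:projectionToptimal}. Your Jensen-type justification for the Minkowski step is a welcome addition the paper leaves implicit.

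For the converse, both you and the paper reach the same intermediate conclusion: from Brenier's formula and \cref{eq:expqtoexpe}, the left-invariant pullback $(L_q)_e^*[\Diff_q\phi]$ equals a fixed covector $\mu_0$-a.e., so $\Diff\phi$ coincides $\mu_0$-a.e.\ with (a constant multiple of) a left-invariant $1$-form $\alpha$. The divergence is in how to conclude $h_Z = 0$. You argue that $\Diff\phi$ is exact, hence closed, forcing $d\alpha = -h_Z(\lambda_*)\,\diff x\wedge\diff y = 0$. The paper instead uses that $\phi\in\mathrm{BV}^2_{\mathrm{loc},H}(A_1)$ (\cref{lemma:locallysemiconcaveBVH}) together with the Ambrosio--Magnani pointwise commutator identity $Z\phi = XY\phi - YX\phi$ from \cite[Theorem~2.2]{Ambrosio2003}; since $X\phi, Y\phi$ are $\mu_0$-a.e.\ constant, this forces $Z\phi = 0$ and hence $h_Z(\lambda_0)=0$.

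Your closedness argument is the smooth shadow of the paper's (the Maurer--Cartan equation $d\omega^Z = -\omega^X\wedge\omega^Y$ is exactly the dual of $[X,Y]=Z$), but it has a regularity gap. The Brenier identity gives $\Diff_q\phi = c\,\alpha(q)$ only $\mu_0$-a.e., not Lebesgue-a.e.\ on an open set, and $\phi$ is merely semiconcave. For a Lipschitz $\phi$, the distributional identity $d(\Diff\phi)=0$ does not transfer to $d\alpha = 0$ unless $\Diff\phi$ agrees with $c\alpha$ Lebesgue-a.e. You flag the ``covector-matching'' as the main obstacle, but that step is in fact routine (injectivity of $\exp_e$ on $C_e(\H)$ plus left-invariance of $H$); the real obstacle is precisely this $\mu_0$-a.e.\ versus $\mathcal{L}^3$-a.e.\ issue, and the paper's $\mathrm{BV}^2_H$ technology is what bridges it.

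A minor point: a forward transport map only requires $T(q)\in J^+(q)$, not $\tau(q,T(q))>0$, so your first sentence of the converse gives $q_0\in J^+(e)$ rather than $q_0\in I^+(e)$ directly.
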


\begin{proof}
	If $z_0 = 0$, then
	\begin{align*}
		R_{q_0}(q) ={}& (x + x_0, y + y_0, z) = (x, y, z) \cdot \exp_e(- x_0 \diff u + y_0 \diff v) \\
		={}& (x, y, z) \cdot \exp_e(-(\tilde T_1(x, y) - x) \diff u + (\tilde T_2(x, y) - y) \diff v),
	\end{align*}
	where $\tilde T(x, y) = (\tilde T_1(x, y), \tilde T_2(x, y)) := (x + x_0, y + y_0)$. Assuming $(x + x_0, y + y_0) \in \tilde I^{+}(x, y)$ for $\mu_0$-a.e. $(x, y) \in \R^2$ yields that $\tilde T$ is a (forward) optimal transport map from $\tilde \mu_0 = \mathsf{p}_\sharp \mu_0$ to $\tilde T_\sharp \tilde \mu_0$. By \cref{prop:projectionToptimal}, we conclude that $R_{q_0}$ is a (forward) optimal transport map from $\mu_0$ to $(R_{q_0})_\sharp \mu_0$.

	Suppose now that $R_{q_0}$ is a (forward) optimal transport map. We immediately obtain $(x + x_0, y + y_0) \in \tilde I^{+}(x, y)$ for $\mu_0$-a.e. $(x, y) \in \R^2$ by simply looking at \cref{eq:I+} and \cref{eq:tildeI+}. Brenier's Theorem implies that $R_{q_0}(q)$ can be written as \cref{eq:brenierexponential}
	for some $c_p$-concave map $\phi : A_1 \to \R$ given by \cref{thm:Brenier}. Since only the differential of $\phi$ is involved in \cref{thm:Brenier}, we assume, without loss of generality, that $\phi(q') = 0$ for some $q' = (x', y', z') \in A_1$. We deduce from \cref{remark:expqtoexpe} and from the left-invariance of the Hamiltonian $H$ that
	\begin{equation}
		\label{eq:righttranslationq0}
			q_0 = \exp_e\left(- (L_q)_e^*[\Diff_{q} \phi] \bigg/ \left(\sqrt{2H(e,(L_q)_e^*[\Diff_{q} \phi])}\right)^{\tfrac{p - 2}{p-1}}\right), \text{ for $\mu_0$-a.e. } q \in \H.
	\end{equation}
	\cref{thm:geodesicspace} gives a unique $\lambda_0 \in C_{e}(\H) \cap H^{-1}(1/2)$ such that $-(L_q)_e^*[\Diff_{q} \phi] = \lambda_0$ for $\mu_0$-a.e. $q \in \H$. Because $\phi$ is absolutely continuous, a computation in coordinates then implies
	\[
	\phi(x, y, z) = - \left(h_X(\lambda_0) + \frac{1}{2} h_Z(\lambda_0)\right) (x - x') - \left(h_Y(\lambda_0) - \frac{1}{2} h_Z(\lambda_0)\right) (y - y') - h_Z(\lambda_0) (z - z')
	\]
	We now use the fact that $\phi$ belongs to $\mathrm{BV}_{\mathrm{loc}, H}^2(A_1)$, which we know from \cref{lemma:locallysemiconcaveBVH}, and thus is twice differentiable $\mu_0$-almost everywhere. The result proven in \cite[Theorem 2.2]{Ambrosio2003} ensures that we have
	\[
	Z \phi(q) = [X, Y] \phi(q) = XY\phi(q) - YX\phi(q) = 0,
	\]
	for $\mu_0$-a.e. every $q \in \H$.
	Therefore, we have $h_Z(\lambda_0) = 0$, $\phi(x, y, z) = - h_X(\lambda_0) (x - x') - h_Y(\lambda_0) (y - y')$, as well as $\Diff_{q} \phi = - h_X(\lambda_0) \diff x - h_Y(\lambda_0) \diff y$, and thus $z_0 = 0$ thanks to \cref{eq:righttranslationq0}.
\end{proof}



 \begin{spacing}{0.5}
 \printbibliography[heading=bibintoc]
 \end{spacing}

\end{document}